\documentclass[reqno,11pt]{amsart}
\setlength{\marginparwidth}{0.8in}
\setlength{\marginparsep}{0.05in}

\usepackage[text={6.5in,9in},centering]{geometry}

\usepackage{array,multirow}
\usepackage{amsmath}
\usepackage{amssymb}
\usepackage{amsfonts}
\usepackage{graphicx}
\usepackage{bm}
\usepackage{enumerate}
\usepackage{dsfont}
\usepackage{float}



\usepackage[pagewise]{lineno} 
\usepackage{verbatim}
\usepackage{color}

\newtheorem{theorem}{Theorem}[section]
\newtheorem{proposition}[theorem]{Proposition}
\newtheorem{lemma}[theorem]{Lemma}
\newtheorem{corollary}[theorem]{Corollary}

\theoremstyle{definition}
\newtheorem{definition}[theorem]{Definition}

\newtheorem{algorithm}[theorem]{Algorithm}

\theoremstyle{remark}
\newtheorem{remark}[theorem]{Remark}


  \newcounter{mnoter}
  \setcounter{mnoter}{0}
  
  \newcounter{mnoteb}
  \setcounter{mnoteb}{0}
  
  \let\oldmarginpar\marginpar
    \renewcommand\marginpar[1]{\-\oldmarginpar[\raggedleft\footnotesize #1]%
    {\raggedright\footnotesize #1}}


\numberwithin{equation}{section}  
\numberwithin{figure}{section}  
\numberwithin{table}{section}  


\setlength{\textwidth}{6.28in}
\setlength{\oddsidemargin}{-.08in}
\setlength{\evensidemargin}{-.08in}

\renewcommand\lll{|\kern-1pt|\kern-1pt|} 
 
\renewcommand{\O}{\Omega} 

\newcommand{\n}{{\bf n}} 

\newcommand{\jump}[1]{\lbrack\!\lbrack #1 \rbrack\!\rbrack} 
\newcommand{\jtau}{\jump{\taub}} 
\newcommand{\taub}{{\boldsymbol {\tau}}}



\newcommand{\h}{\widetilde{h}}
\newcommand{\Vc}{V^{\rm{conf}}_{\h}}
\newcommand{\Vcr}{V_{h}^{CR}} 

\newcommand{\av}[1]{\{\!\!\{#1\}\!\!\}} 
\newcommand{\Eh}{{{\mathcal E}_h}} 
\newcommand{\Eho}{{{\mathcal E}^{o}_h}} 
\newcommand{\cT}{\mathcal{T}}

\newcommand{\Th}{\mathcal{T}_h}
\newcommand{\K}{T}
\newcommand{\dyle}{\displaystyle}
\newcommand{\Ehb}{{{\mathcal E}^{\partial}_h}} 

\renewcommand{\lor }{\longrightarrow}



 



\newcommand{\calT}{ \mathcal{T}}





\newcommand{\mathcalB}{\mathcal{B}}
\newcommand{\calA}{ \mathcal{A}}

\newcommand{\calR}{ \mathcal{R}}

\newcommand{\calP}{ \mathcal{P}}

\newcommand{\calE}{ \mathcal{E}}
\newcommand{\calZ}{ \mathcal{Z}}




\newcommand{\triplenorm}[1]{%
 \left\vert\kern-0.9pt\left\vert\kern-0.9pt\left\vert #1
 \right\vert\kern-0.9pt\right\vert\kern-0.9pt\right\vert}  

\newcommand{\Reals}[1]{{\rm I\! R}^{#1}}

\begin{document}

\title[Multilevel Preconditioners for DG methods for  Jump coefficient problems]
      {Multilevel Preconditioners for Discontinuous \\
       Galerkin Approximations of Elliptic Problems \\ 
       with Jump Coefficients}

\author[B. Ayuso de Dios]{Blanca Ayuso De Dios}
\email{bayuso@crm.cat}
\author[M. Holst]{Michael Holst}
\email{mholst@math.ucsd.edu}
\author[Y. Zhu]{Yunrong Zhu}
\email{zhu@math.ucsd.edu}
\author[L. Zikatanov]{Ludmil Zikatanov} 
\email{ltz@math.psu.edu}

\date{\today}

\keywords{Multilevel preconditioner, discontinuous Galerkin methods,  Crouzeix-Raviart finite elements, space decomposition}

\begin{abstract}
We introduce and analyze two-level and multi-level preconditioners
for a family of Interior Penalty (IP) discontinuous Galerkin (DG) 
discretizations of second order elliptic problems with large jumps in the 
diffusion coefficient. 
Our approach to IPDG-type methods is based on a splitting of the 
DG space into two components that are orthogonal in the energy inner product 
naturally induced by the methods.  
As a result, the methods and their analysis depend in a crucial way on the 
diffusion coefficient of the problem.
The analysis of the proposed preconditioners is presented for both
symmetric and non-symmetric IP schemes; dealing simultaneously with the 
jump in the diffusion coefficient and the non-nested character of the 
relevant discrete spaces presents extra difficulties in the analysis which 
precludes a simple extension of existing results.
However, we are able to establish robustness 
(with respect to the diffusion coefficient) 
and nearly-optimality (up to a logarithmic term depending on the mesh size)
for both two-level and BPX-type preconditioners.
Following the analysis, we present a sequence of detailed numerical results 
which verify the theory and illustrate the performance of the methods.
The paper includes an Appendix with a collection of proofs of 
several technical results required for the analysis.
\end{abstract}

\maketitle



\section{Introduction}
\label{sec:intro}
Let $\Omega\subset \Reals{d}$ be a bounded polygon (for $d=2$) or
polyhedron (for $d=3$) and $f\in L^{2}(\O)$. We consider the following second order 
elliptic equation with strongly discontinuous coefficients:
\begin{equation}\label{eqn:model}
    \left\{\begin{array}{rl}
      -\nabla\cdot(\kappa\nabla u)=f &\mbox{ in } \Omega,\\
      u=0 & \mbox{ on } \partial \Omega.
     \end{array}
    \right.
\end{equation}
The scalar function $\kappa=\kappa(x)$ denotes the diffusion coefficient 
which is assumed to be piecewise constant with respect to an initial 
non-overlapping (open) subdomain partition of the domain $\Omega$,
denoted $\mathcal{T}_S=\{\Omega_m\}_{m=1}^{M},$ with $\cup_{m=1}^M
\overline{\Omega}_m=\overline{\Omega}$ and ${\Omega}_m
\cap {\Omega}_n =\emptyset$ for $n\neq m$. 
Although the (polygonal or polyhedral) regions $\Omega _m\;, m=1\ldots M,$ 
might have complicated geometry, we will always assume that there is an 
initial shape-regular triangulation 
$\mathcal{T}_{0}$ such that $\kappa_{T}=\kappa(x)|_{T}$ is a
constant for all $T\in\mathcal{T}_{0}$.
Problem \eqref{eqn:model} belongs to the class of interface
 or transmission problems, which are relevant to many
applications such as groundwater flow, electromagnetics and semiconductor device modeling. 
The coefficients in these applications might have large discontinuities across the interfaces between different regions with different material properties. 
Finite element discretizations of \eqref{eqn:model} lead to linear
systems with badly conditioned stiffness matrices. The condition numbers of these matrices depend not only on the mesh size, but also on the largest jump in the coefficients. 

Much research has been devoted to developing efficient 
and robust preconditioners  for conforming  finite element discretizations of \eqref{eqn:model}. Nonoverlapping domain decomposition preconditioners, such as Balancing Neumann-Neumann \cite{Mandel.J;Brezina.M1996}, FETI-DP \cite{Klawonn.A;Widlund.O;Dryja.M2002} and Bramble-Pasciak-Schatz
Preconditioners \cite{Bramble.J;Pasciak.J;Schatz.A1989} have been shown to be robust with respect to coefficient variations and mesh size (up to a logarithmic factor), 
in theory and in practice, but only if special {\it exotic}  coarse 
solvers (such as those based on discrete harmonic
extensions \cite{dryja13, Mandel.J;Brezina.M1996, dryja14}) are
used (see also~\cite{Xu.J;Zou.J1998}). 
The construction and use of such exotic coarse spaces is avoided in other multilevel methods, such as the Bramble-Pasciak-Xu (BPX)  or multigrid preconditioners, for which it has always been observed that when used with conjugate gradient (CG) iteration, 
result in robust and  efficient algorithms with respect 
to jumps in the coefficients, independently of the problem dimension.
However, their analysis (based on the standard CG theory) predict a deterioration in the rate of convergence with respect to both the coefficients and the mesh size, 
By resorting to more sophisticated CG theory (see \cite[Section 13.2]{Axelsson.O1994}, \cite{Axelsson.O2003}) which accounts for and exploits the particular spectral structure of the  preconditioned systems\footnote{Namely, that there are a few small eigenvalues due to the jump coefficient  distribution that have no influence in the (observed) overall convergence of the iteration}, the authors in \cite{XuJZhuY-2008aa,ZhuY-2008aa} show that standard multilevel and overlapping domain decomposition methods lead to nearly optimal preconditioners for CG algorithms. (See also \cite{Chen.L;Holst.M;Xu.J;Zhu.Y2010}). Much less attention has been devoted to nonconforming approximations. Overlapping preconditioners for the lowest order Crouzeix-Raviart approximation of \eqref{eqn:model} are found in \cite{sarkisNC0,sarkisNC1}, where the analysis depends on the assumption that the coefficient $\kappa$ is quasi-monotone.

In this article, we consider the construction and analysis of preconditioners for the 
Interior Penalty (IP) Discontinuous Galerkin (DG) approximation of \eqref{eqn:model}.
Based on discontinuous finite element spaces, DG methods can deal robustly with partial differential equations
of almost any kind, as well as with equations whose type changes within the computational
domain. They are naturally suited for multi-physics applications, 
and for problems with highly varying material properties, 
such as \eqref{eqn:model}.
The design of efficient solvers for DG discretizations has been 
pursued only in the last ten years;  and, while classical approaches have been successfully extended 
to second order elliptic problems, the discontinuous nature of
the underlying finite element spaces has motivated the creation of
new techniques to develop solvers. Additive Schwarz methods (of overlapping and non-overlapping type)
are considered and analyzed in \cite{kara0, sarkis0,
   AntoniettiPAyusoB-2007aa, AntoniettiPAyusoB-2008aa,
   AntoniettiPAyusoB-2010aa, bren10}. 
Multigrid methods are studied 
in \cite{GopalakrishnanJKanschatG-2003aa, brennerVF04, wonipgW,
   bren11, SA_DG0, CockbDuboiGopal10}. 
Two-level methods are presented
in~\cite{DobrevVLazarovRVassilevskiPZikatanovL-2006aa,
   BrixKCampos-PintoMDahmenW-2008aa,
   BrixKCampos-PintoMDahmenWMassjungR-2009aa}.
More general multi-level methods based on algebraic techniques are considered
in~\cite{kraus0,kraus1}. However, all the analysis in these works consider only the case of a smoothly or slowly varying diffusivity coefficient.
For problem \eqref{eqn:model}, only in \cite{sarkis0, sarkis1, sarkis2} 
have the authors introduced and analyzed non-overlapping BBDC and
FETI-DP domain decomposition preconditioners for a Nitsche type method where a Symmetric Interior Penalty  DG discretization is used (only) on the skeleton of the subdomain partition, 
while a standard conforming approximation is used in the interior of the
subdomains. 
Robustness and quasi-optimality is shown in $d=2$ for the Additive
and Hybrid BBDC \cite{sarkis1} and FETI-DP \cite{sarkis2}
preconditioners, even for the case of non-matching grids. As it happens for conforming discretizations, the construction and analysis of these preconditioners rely on the use of  {\it exotic} coarse solvers, which might complicate the actual implementation of the method.

The goal of this article is to design, and provide a rigorous analysis of,
a {\it simple} multilevel solver for the lowest order (i.e. piecewise linear discontinuous) approximation of a family of Interior Penalty (IPDG) methods.  To ease the presentation, we focus on a minor variant of the classical IP methods, penalizing only the mean value of the jumps: the ``weakly penalized'' or IPDG-0 methods (called Type-0 in \cite{Ayuso-de-DiosBZikatanovL-2009aa}). 
Our approach follows the ideas in \cite{Ayuso-de-DiosBZikatanovL-2009aa}, and it is based on a splitting of the DG space into two components that are orthogonal in the energy inner product naturally induced  by the IPDG-0 methods.
 
Roughly speaking, the construction amounts to
identifying a ``low frequency'' space (the Crouzeix-Raviart elements) and
then defining a  complementary space. However, a notable difference takes place in the DG space decomposition introduced for the Laplace equation~\cite{BurmanEStammB-2008aa,
  Ayuso-de-DiosBZikatanovL-2009aa}. For problem \eqref{eqn:model}, the subspaces depend on the coefficient $\kappa$, and 
this is certainly related to the splittings used in algebraic multigrid
(AMG~\cite{BrandtMcCormickRuge_1982aa}).
With the orthogonal splitting of the DG space at hand, the solution of problem
\eqref{eqn:model} reduces to solving two sub-problems: a non-conforming
approximation to \eqref{eqn:model}, and a problem in the complementary
space containing {\it high oscillatory} error components.
We show the latter subproblem is easy to solve, since it is spectrally
equivalent to its diagonal form, and so  CG with a diagonal
preconditioner is a uniform and robust solver.

For the former subproblem, following \cite{XuJZhuY-2008aa,ZhuY-2008aa}, we develop and analyze (in the standard and asymptotic convergence regimes) a two-level method and a BPX preconditioner. Nevertheless, dealing simultaneously with the jump in the coefficient 
$\kappa$ and the non-nested character of the Crouziex-Raviart (CR) spaces presents extra
difficulties in the analysis which precludes a simple extension
of \cite{XuJZhuY-2008aa,ZhuY-2008aa}.
We are able to establish nearly optimal convergence and robustness 
(with respect to both the mesh size and the coefficient $\kappa$) 
for the two-level method and for the BPX preconditioner 
(up to a logarithmic term depending on the mesh size). The resulting algorithms involve the use of a solver in the CR space 
that is reduced to a smoothing step followed by a conforming solver.
Therefore, in particular one can argue that any of the robust and 
efficient solvers designed for conforming approximations of 
problem \eqref{eqn:model} could be used as a preconditioner here.  Finally we mention that, although the two-level and multilevel methods we propose are based on the piecewise linear IP-0 methods, they could be used as preconditioners for the solution of the linear systems arising from high order DG methods. 
 
\subsection*{Outline of the paper}

The rest of the paper is organized as follows. We introduce the IPDG-1
and IPDG-0 methods for approximating \eqref{eqn:model} in~\S\ref{sec:dg} and revise some of their properties. The space
decomposition of DG finite element space is introduced in~\S\ref{sec:Vdecomp}. Consequences of the space splitting are described in~\S\ref{sec:solvers}. The
two-level and multi-level methods for the Crouzeix-Raviart
approximation of  \eqref{eqn:model} are constructed and analyzed in~\S\ref{sec:crsolver}.  Numerical experiments are included
in~\S\ref{sec:numerical}, to verify the theory and assess the
performance and robustness of the proposed preconditioners. 
In~\S\ref{subsect:preconditioners-for-type-0 and-1} we briefly comment on how the developed solvers and theory can be extended for the classical IPDG-1 family. The paper
is completed with an Appendix where we have collected proofs of several technical results required in our analysis.


Throughout the paper we shall use the
standard notation for Sobolev spaces and their norms.  
We will use the notation $x_1\lesssim y_1$, and $x_2\gtrsim y_2$,
whenever there exist constants $C_1, C_2$ independent of the mesh size
$h$ and the coefficient $\kappa$ or other parameters that $x_1$,
$x_2$, $y_1$ and $y_2$ may depend on, and such that $x_1 \le C_1 y_1$
and $x_2\ge C_2 y_2$, respectively. We also use the notation $x\simeq y$ for $C_{1} x \le y\le C_{2} x$.
\section{Discontinuous Galerkin Methods}
\label{sec:dg}

In this section, we introduce the basic notation and describe the DG
methods we consider for approximating the problem
\eqref{eqn:model}. 

Let $\Th$ be a shape-regular family of partitions of $\O$ into
$d$-simplices $\K$ (triangles in $d=2$ or tetrahedra in $d=3$). We
denote by $h_{\K}$ the diameter of $\K$ and we set $h=\max_{\K \in
  \Th} h_{\K}$. We also assume that the decomposition $\Th$ is
conforming in the sense that it does not contain hanging nodes and
that $\Th\subset \mathcal{T}_0$, with $\mathcal{T}_0$ being quasi-uniform initial triangulation that resolves the coefficient $\kappa$.  We denote
by $\Eh$ the set of all edges/faces and by $\Eho$ and $\Ehb$ the
collection of all interior and boundary edges/faces, respectively.
The space $H^{1}(\Th)$ is the set of element-wise $H^{1}$ functions,
and $L^{2}(\Eh)$ refers to the set of functions whose traces on
the elements of $\Eh$ are square integrable.

Following \cite{ArnoldDBrezziFCockburnBMariniL-2001aa}, we recall the
usual DG analysis tools. Let $\K^{+}$ and $\K^{-}$ be
two neighboring elements, and let $\n^{+}$, $\n^{-}$ be their outward
normal unit vectors, respectively ($\n^{\pm}=\n_{\K^{\pm}}$). Let
$\zeta^{\pm}$ and $\taub^{\pm}$ be the restriction of $\zeta$ and
$\taub$ to $\K^{\pm}$. We set:
\begin{align*}
2\av{\zeta}&=(\zeta^+ +\zeta^-),\quad
\jump{\zeta}=\zeta^+\n^++\zeta^-\n^- \quad&&\mbox{on } e\in
\Eho,\label{av-jump-scalar}\\
2\av{\taub}&=(\taub^+ +\taub^-), \quad
\jtau=\taub^+\cdot\n^+ + \taub^-\cdot\n^- &&\mbox{on } e\in
\Eho. \nonumber 
\end{align*}
We also define the weighted average, $\av{\cdot}_{\delta}$, for any $\delta=\{\delta_e\}_{e\in\Eho}$ with $\delta_e \in [0,1]\,\, \forall\,e$:
\begin{equation}\label{av-w}
\av{\zeta}_{\delta_{e}}=\delta_e \zeta^+ + (1-\delta_e)\zeta^-\;,\quad \av{\taub}_{\delta_{e}}=\delta_e\taub^+ +(1-\delta_e)\taub^-\;, \qquad \mbox{on } e\in
\Eho\;.
\end{equation}
For $e\in \Ehb$, we set
\begin{equation}\label{av-jump-boundary}
\jump{\zeta}=\zeta\n, \quad
\av{\taub}=\av{\taub}_{\delta_{e}}= \taub \qquad \mbox{on } e\in \Ehb.
\end{equation}
We will also use the notation
\begin{equation*}
(u,w)_{\Th}=\dyle\sum_{\K \in \Th} \int_{\K} uw dx \quad \forall\,\, u,w \in L^{2}(\Omega),\quad \langle u, w\rangle_{\Eh}=\dyle\sum_{e\in \Eh} \int_{e} u w ds \quad \forall\, u,w, \in L^{2}(\Eh).
\end{equation*}
The DG approximation to the model problem \eqref{eqn:model} can be written as
\begin{equation*}\label{25}
\mbox{ Find     }u^{DG}_h \in V_{h}^{DG} \mbox{   such that   }\calA^{DG}(u^{DG}_{h},w)=(f,w)_{\Th}\;, \quad \forall\, w\in V_{h}^{DG}\;,
\end{equation*}
where $V_{h}^{DG}$ is the piecewise linear discontinuous finite element space, and $\calA^{DG}(\cdot,\cdot)$ is the bilinear form defining the method. 

In this paper, we focus on a family of weighted Interior Penalty methods (see \cite{StenbergR-1998aa}), with special attention given to a variant (weakly penalized) of them. The bilinear form defining the {\it classical} family of  weighted IP methods~\cite{StenbergR-1998aa}, here called IP($\beta$)-1 methods,  is given by $\calA^{DG}(\cdot,\cdot)=\calA(\cdot,\cdot)$, with 
\begin{equation}\label{ipA}
\begin{aligned}
\calA(v,w)&= (\kappa\nabla _h v,\nabla w)_{\Th} -\langle \av{ \kappa\nabla v }_{\beta_{e}}, \jump{w}\rangle_{\Eh}+\theta \langle \jump{v},\av{\kappa\nabla w}_{\beta_{e}} \rangle_{\Eh} &&\\
&\qquad +\langle \alpha h^{-1}_{e}  \kappa_e \jump{v},\jump{w}\rangle_{\Eh},   \qquad \forall\, v,\, w\,\, \in \, V_{h}^{DG}\;.&&
\end{aligned}
\end{equation}
where $\theta =-1$ gives the SIPG($\beta$)-1 methods; $\theta = 1$
leads to NIPG($\beta$)-1 methods; and $\theta =0$ gives the IIPG($\beta$)-1
methods. Here, $h_e$ denotes the $(d-1)$ dimensional Lebesgue measure of $e\in \Eh$.The penalty parameter $\alpha>0$ is set to be a
positive constant; and  it has to be taken large enough
to ensure coercivity of the corresponding bilinear forms when $\theta\ne 1$. 
The symmetric method was first considered in  \cite{StenbergR-1998aa} and later 
in \cite[Section 4]{Dryja2003} for jump coefficient problems 
(although there it was written using a slightly different
notation and DG was only used in the skeleton of the partition). It was later extended to advection-diffusion problems in
\cite{BurmanEZuninoP-2006aa} and
\cite{Di-PietroDErnAGuermondJ-2008aa}.

We also introduce the corresponding family of IP($\beta$)-0 methods,
which use the mid-point quadrature rule for computing the integrals in
the last term in \eqref{ipA} above. That is, we set
$\calA^{DG}(\cdot,\cdot)=\calA_0(\cdot,\cdot)$ with
\begin{equation}\label{ipA0}
\begin{aligned}
\calA_0(v,w)&= (\kappa\nabla v,\nabla w)_{\Th} -\langle \av{ \kappa\nabla v}_{\beta_{e}}, \jump{w}\rangle_{\Eh}+\theta \langle \jump{v},\av{\kappa\nabla w}_{\beta_{e}} \rangle_{\Eh} &&\\
&\qquad +\langle \alpha h^{-1}_{e}  \kappa_{e} \calP^{0}_e(\jump{v}),\jump{w}\rangle_{\Eh},   \qquad \forall\, v,\,\, w\,\, \in V_{h}^{DG}\;,&&
\end{aligned}
\end{equation}
where $\calP^{0}_e: L^{2}(\Eh)\mapsto \mathbb{P}^{0}(\Eh)$ is the
$L^{2}$-projection onto the piecewise constants on $\Eh$. 
We note that this projection satisfies $\|\calP^{0}_e\|_{L^{2}(\Eh)}=1$. 
In \eqref{ipA} and \eqref{ipA0}, for any $e\in \Eho$ with $e=\partial
T^{+}\cap \partial T^{-}$,  the coefficient $\kappa_{\K}$ and the 
{\it weight} $\beta_e$ are defined as follows: 
\begin{equation}\label{def:beta}
\kappa_{\K}=\kappa|_{\K}, \quad
\beta_e=\frac{\kappa^{-}}{\kappa^{+}+\kappa^{-}},
\quad\mbox{where}\quad \kappa^{\pm}=\kappa_{|_{\K^{\pm}}},
\end{equation}
The coefficient $\kappa_e$ as the harmonic mean of
$\kappa^{+}$ and $\kappa^{-}$:
\begin{equation}\label{defKE}
\kappa_e:=\frac{2\kappa^{+}\kappa^{-}}{\kappa^{+}+\kappa^{-}}\;.
\end{equation} 
The weight $\beta=\{\beta_e\}_{e\in \Eho}$ depends on the 
coefficient $\kappa$ and therefore it might vary over all interior edges/faces (of the subdomain partition $\mathcal{T}_0$ resolving the coefficient $\kappa$). 

\begin{remark}
We note that one could take $\kappa_{e}$ as $\min\{\kappa^{+}, \kappa^{-}\}$,
since both are equivalent:
\begin{equation}\label{cota-a}
\min{\{\kappa^{+},\kappa^{-}\}} \leq  \kappa_{e} =\frac{2\kappa^{+}\kappa^{-}}{\kappa^{+}+\kappa^{-}} \le
2\min{\{\kappa^{+},\kappa^{-}\}} \le 2\kappa^{\pm}\;.
\end{equation}
The equivalence relations in \eqref{cota-a} show that the results on
spectral equivalence and uniform preconditioning given later for
\eqref{ipA} with $\kappa_{e}$ defined in~\eqref{defKE} (the
harmonic mean) will automatically hold for method \eqref{ipA}
with $\kappa_e:=\min{\{\kappa^{+},\kappa^{-}\}}$. To fix the
notation and simplify the presentation, we stick to definition \eqref{defKE} for 
$\kappa_{e}$.
\end{remark}
\subsection*{Weighted Residual Formulation}
Following \cite{BrezziFCockburnBMariniLSuliE-2006aa}
 we can rewrite the two
families of IP methods in the weighted residual framework: For all
$v, w\in V_{h}^{DG}$,
\begin{align}
\calA(v,w)&=(-\nabla\cdot (\kappa\nabla v), w)_{\Th} +\langle \jump{\kappa\nabla  v}, \av{w}_{1-\beta_{e}}\rangle_{\Eho} +\langle \jump{v}, \mathcalB_{1}(w)\rangle_{\Eh}, && \label{ip:res}\\
\calA_0(v,w)&=(-\nabla\cdot (\kappa\nabla v), w)_{\Th} +\langle \jump{\kappa\nabla  v}, \av{w}_{1-\beta_{e}}\rangle_{\Eho} +\langle \jump{v}, \calP^0_{e}(\mathcalB_{1}(w))\rangle_{\Eh}, && \label{ip:res0}
\end{align}
where $\mathcalB_1$ is defined as:
\begin{equation}\label{defB1}
\mathcalB_{1}(w) = \theta \av{\kappa\nabla w}_{\beta_{e}}
+\alpha h_e^{-1}\kappa_{e}\jump{w}, \qquad \forall\, e\in \Eh.
\end{equation}
 Throughout the paper both the weighted residual formulation
\eqref{ip:res}-\eqref{ip:res0} and the standard one
\eqref{ipA}-\eqref{ipA0} will be used interchangeably.

We now establish a result that guarantees the spectral equivalence
between $\mathcal{A}(\cdot,\cdot)$ and $\mathcal{A}_0(\cdot,\cdot)$.
\begin{lemma}
	\label{lm:equivA:A0}
	Let $\mathcal{A}(\cdot,\cdot)$ be a bilinear form
        corresponding to a  IP($\beta$)-1 method \eqref{ipA} and
        let $\mathcal{A}_{0}(\cdot,\cdot)$ be the corresponding IP($\beta$)-0 
        bilinear form as defined in \eqref{ipA0}.  Then there exists a
        positive constant $c_{0}=c_{0}(\alpha)$, depending only on the
        shape regularity of the mesh and the penalty parameter
        $\alpha$ (but independent of the coefficient $\kappa$ and the mesh
        size $h$) such that,
\begin{equation}\label{equivA:A0}
\calA_{0} (v,v)\le \calA(v,v)\le c_0(\alpha)  \calA_{0} (v,v)\quad \forall v\in V_{h}^{DG}.
\end{equation}
\end{lemma}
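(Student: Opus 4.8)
The plan rests on the observation that $\calA(\cdot,\cdot)$ and $\calA_0(\cdot,\cdot)$ differ \emph{only} in their last (penalty) term, and that this difference is exactly the weighted $L^{2}$-projection defect of the jump. Subtracting \eqref{ipA0} from \eqref{ipA} and using that $\calP^0_e$ is an $L^{2}(\Eh)$-orthogonal projection, so that $\langle\calP^0_e\jump{v},\jump{v}\rangle_{e}=\|\calP^0_e\jump{v}\|^{2}_{L^{2}(e)}$, one gets, for every $v\in V_{h}^{DG}$,
\begin{equation*}
\calA(v,v)-\calA_0(v,v)
=\sum_{e\in\Eh}\alpha h_e^{-1}\kappa_e\big(\|\jump{v}\|^{2}_{L^{2}(e)}-\|\calP^0_e\jump{v}\|^{2}_{L^{2}(e)}\big)
=\sum_{e\in\Eh}\alpha h_e^{-1}\kappa_e\,\|\jump{v}-\calP^0_e\jump{v}\|^{2}_{L^{2}(e)}\ \ge\ 0 .
\end{equation*}
Since $\alpha,h_e^{-1},\kappa_e>0$, this is already the left inequality in \eqref{equivA:A0}, with constant one. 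Denoting by $\Delta(v)$ the nonnegative right-hand side, it remains to bound $\Delta(v)$ by a $\kappa$- and $h$-independent multiple of $\calA_0(v,v)$.

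The key step is the local estimate $\Delta(v)\le C_{1}(\alpha)\,(\kappa\nabla_h v,\nabla_h v)_{\Th}$, with $C_{1}$ depending only on $\alpha$ and the shape regularity of $\Th$. Fix an interior edge $e=\partial T^{+}\cap\partial T^{-}$; since $\n^{-}=-\n^{+}$ we have $\jump{v}=(v^{+}-v^{-})\n^{+}$, hence $\|\jump{v}-\calP^0_e\jump{v}\|^{2}_{L^{2}(e)}\le 2\|v^{+}-\calP^0_e v^{+}\|^{2}_{L^{2}(e)}+2\|v^{-}-\calP^0_e v^{-}\|^{2}_{L^{2}(e)}$. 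Each $v^{\pm}|_{e}$ is affine on $e$ and $\calP^0_e$ subtracts its edge-mean, so $v^{\pm}-\calP^0_e v^{\pm}$ is the mean-free linear part; a Poincar\'e (or elementary scaling) estimate on $e$ bounds its $L^{2}(e)$-norm by $\mathrm{diam}(e)$ times the tangential gradient of $v^{\pm}$, and combining this with the discrete trace inequality $\|\nabla v^{\pm}\|_{L^{2}(e)}\lesssim h_{T^{\pm}}^{-1/2}\|\nabla v^{\pm}\|_{L^{2}(T^{\pm})}$ and shape regularity yields $h_e^{-1}\|\jump{v}-\calP^0_e\jump{v}\|^{2}_{L^{2}(e)}\lesssim\|\nabla v^{+}\|^{2}_{L^{2}(T^{+})}+\|\nabla v^{-}\|^{2}_{L^{2}(T^{-})}$. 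Multiplying by $\kappa_e$ and invoking \eqref{cota-a}, $\kappa_e\le 2\min\{\kappa^{+},\kappa^{-}\}\le 2\kappa^{\pm}$, turns this into $\alpha h_e^{-1}\kappa_e\|\jump{v}-\calP^0_e\jump{v}\|^{2}_{L^{2}(e)}\lesssim\alpha\big(\kappa^{+}\|\nabla v^{+}\|^{2}_{L^{2}(T^{+})}+\kappa^{-}\|\nabla v^{-}\|^{2}_{L^{2}(T^{-})}\big)$; summing over $e\in\Eh$ and using that each simplex has $d+1$ faces produces the claimed local estimate. It is precisely here that one must route everything through the harmonic mean $\kappa_e$, which is dominated by the \emph{smaller} of the two adjacent coefficients: that is what keeps $C_1$ independent of $\kappa$.

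To finish, I would absorb $(\kappa\nabla_h v,\nabla_h v)_{\Th}$ into $\calA_0(v,v)$ using coercivity of $\calA_0(\cdot,\cdot)$ on $V_{h}^{DG}$. When $\theta=1$ the cross terms in $\calA_0(v,v)$ cancel and the penalty term is nonnegative, so $\calA_0(v,v)\ge(\kappa\nabla_h v,\nabla_h v)_{\Th}$, hence $\calA(v,v)=\calA_0(v,v)+\Delta(v)\le(1+C_1(\alpha))\calA_0(v,v)$. When $\theta\ne1$, with $\alpha$ chosen large enough $\calA_0$ is coercive with respect to its natural DG norm, so $(\kappa\nabla_h v,\nabla_h v)_{\Th}\le C_2(\alpha)\calA_0(v,v)$ and therefore $\calA(v,v)\le\big(1+C_1(\alpha)C_2(\alpha)\big)\calA_0(v,v)=:c_0(\alpha)\calA_0(v,v)$; all constants depend only on shape regularity and $\alpha$, matching the statement.

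I expect the one delicate point to be the local estimate of the second paragraph, and within it the requirement that its constant be $\kappa$-independent: the bookkeeping has to pair the penalty defect on each edge with the energy of its two adjacent simplices \emph{each weighted by its own coefficient}, which is available only because $\kappa_e$ (the harmonic mean, equivalently up to a factor $\min\{\kappa^{+},\kappa^{-}\}$) is dominated by both neighbors; a cruder bound using $\max\{\kappa^{+},\kappa^{-}\}$ would bring the coefficient jump back in. The appeal to coercivity of $\calA_0$ for $\theta\ne1$ is the usual reason why $c_0$ is permitted to depend on $\alpha$.
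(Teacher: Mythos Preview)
Your proof is correct and follows essentially the same route as the paper's. Both observe that $\calA(v,v)-\calA_0(v,v)=\sum_{e}\alpha h_e^{-1}\kappa_e\|\jump{v}-\calP^0_e\jump{v}\|_{0,e}^{2}\ge 0$ for the lower bound, and for the upper bound both reduce to the local estimate $h_e^{-1}\kappa_e\|\jump{v}-\calP^0_e\jump{v}\|_{0,e}^{2}\lesssim \kappa^{+}\|\nabla v\|_{0,T^{+}}^{2}+\kappa^{-}\|\nabla v\|_{0,T^{-}}^{2}$ (the paper phrases this equivalently via Pythagoras as $\sum_e\alpha h_e^{-1}\kappa_e\|\jump{v}\|_{0,e}^{2}\lesssim \sum_T\kappa_T\|\nabla v\|_{0,T}^{2}+\sum_e\alpha h_e^{-1}\kappa_e\|\calP^0_e\jump{v}\|_{0,e}^{2}$ and defers its proof to references), followed by coercivity of $\calA_0$ to absorb the gradient energy; you simply spell out the edge-Poincar\'e\,/\,trace argument and the role of~\eqref{cota-a} that the paper leaves implicit.
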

\begin{proof}
  The lower bound follows immediately from the fact that the
  projection $\calP_{e}^{0}$ is an $L^{2}(\Eh)$-orthogonal projection and therefore
  has unit norm.  The upper bound would follow if we  show 
  \begin{equation*}
    \dyle{\sum_{e\in\Eh} \alpha h_e^{-1}\kappa_{e}
      \|\jump{v}\|_{0,e}^{2} \le C (\sum_{\K\in \Th}
      \kappa_{\K}\|\nabla v\|_{0,\K}^{2} +\sum_{e\in\Eh}  \alpha h_e^{-1}\kappa_{e} \| \calP^{0}_{e}\jump{v}\|_{0,e}^{2}})\;,
\end{equation*}
which can be proved by arguing exactly as in \cite{Ayuso-de-DiosBZikatanovL-2009aa, BrennerSOwensL-2007aa, abm} and taking into account~\eqref{cota-a}.
 \end{proof}
By virtue of Lemma \ref{lm:equivA:A0}, it will be enough throughout the rest of the paper to focus on the design and analysis of multilevel preconditioners for the IP($\beta$)-0 methods. At least in the symmetric case, the  preconditioners proposed for SIPG($\beta$)-0 will exhibit the same convergence (asymptotically) when applied to SIPG($\beta$)-1.

\subsection*{Continuity and Coercivity of IP($\beta$)-0 methods}
The family of methods \eqref{ipA0} can be shown to provide an
accurate and robust approximation to the solution of
\eqref{eqn:model}.  
We define the energy norm:
\begin{equation}\label{def-normN}
 \triplenorm{v}_{DG0}^{2}:= \dyle\sum_{T\in \Th} \kappa_{T}\|\nabla v\|_{0,T}^{2}+ \dyle\sum_{e\in \calE_{h}}\kappa_{e}h^{-1}_{e}\|\calP^{0}_{e}(\jump{v})\|_{0,e}^{2}.
\end{equation}
Then,  $\calA_0(\cdot,\cdot)$
is \emph{continuous} and \emph{coercive} in the above norm, with
constants independent of the mesh size $h$ and the coefficient
$\kappa$:
\begin{align}
&\mbox{\it Continuity:}& |\calA_0(v,w)| & \lesssim \triplenorm{v}_{DG0} \,\triplenorm{ w}_{DG0},
\qquad &\forall\, v\;, \,\, w\,\in \,V_{h}^{DG},&&
&& \label{eq:cont}
\\
&\mbox{\it Coercivity:} &\calA_0(v,v) & \gtrsim \triplenorm{v}_{DG0}^{2}\;,\qquad\qquad &\forall v\in V_{h}^{DG0}\;.&&
\label{eq:coer}
\end{align}
Although the proof of~\eqref{eq:coer} and~\eqref{eq:cont} is standard,  
we sketch it here for completeness. 
Note first that for each $e\in \Eho$ such that
$e=\partial\K^{+}\cap \partial\K^{-}$, the weighted average $ \av{
 \kappa\nabla  v}_{\beta_e}$ can be rewritten as:
\begin{align}
\av{\kappa \nabla v}_{\beta_e}&=\beta_e (\kappa^{+}(\nabla v)^{+})+(1-\beta_{e})
(\kappa^{-}(\nabla v)^{-})&&\nonumber\\
& =\frac{\kappa^{-}}{\kappa^{+}+\kappa^{-}} \kappa^{+} (\nabla v)^{+} + \frac{\kappa^{+}}{\kappa^{+}+\kappa^{-}} \kappa^{-} (\nabla v)^{-} &&\nonumber\\
&= \frac{\kappa^{+}\kappa^{-}}{\kappa^{+}+\kappa^{-}} [ (\nabla v)^{+} + (\nabla v)^{-}] =\kappa_{e}\av{\nabla v}\;.&& 
 \end{align} 
Trace inequality~\cite{AgmonS-1965aa},
inverse inequality~\cite{CiarletP-1978aa} and~\eqref{cota-a} 
 imply the following bounds
\begin{equation*}
\begin{aligned}
h_e\|\av{\kappa \nabla v}_{\beta_{e}}\|^{2}_{0,e} &\le C_t(\kappa_{e})^{2}
\left(\|\nabla v\|^{2}_{0,\K^{+}\cup \K^{-}} +h^{2}|\nabla v|^{2}_{1,\K^{+}\cup \K^{-}}
\right)\\ &\le 2(\kappa_{e}) C_t(1+C_{inv}^{2}) \left(\kappa^{+}\|\nabla
v\|^{2}_{0,\K^{+}}  + \kappa^{-}\|\nabla
v\|^{2}_{0,\K^{-}} \right).
\end{aligned}
\end{equation*}
This inequality, combined with Cauchy-Schwarz inequality and \eqref{cota-a},
gives 
\begin{equation}\label{cota:mix}
\begin{aligned}
  \left|\langle \av{\kappa \nabla v}_{\beta_e}, \jump{w}\rangle_{\Eh}\right|& =\left|\sum_{e\in\Eh}\int_{e} \kappa_{e}\av{\nabla v} \calP^{0}_e(\jump{w}) ds\right| && \nonumber \\
  &\le \left(\sum_{e\in\Eh} \frac{1}{\alpha}h_{e}\kappa_{e} \|\av{\nabla v}\|^{2}_{0,e} \right)^{1/2} \left(\sum_{e\in\Eh} \alpha h_{e}^{-1} \kappa_{e} \|\calP^{0}_e(\jump{w})\|_{0,e}^{2}\right)^{1/2} &&\nonumber\\
  &\le \frac{8C_t(1+C^{2}_{inv})}{\alpha}\sum_{T\in\Th}
  \kappa_{\K}\|\nabla v\|^{2}_{0,\K} + \frac{1}{4}\sum_{e\in \Eh} \alpha
  h_e^{-1}\kappa_{e}\| \calP^{0}_e(\jump{w})\|_{0,e}^{2}.
\end{aligned}
\end{equation}
Now \eqref{eq:cont} follows from Cauchy-Schwarz inequality. 
The inequality \eqref{eq:coer} is proved by setting $w=v$ in
\eqref{ipA} and taking into account the above estimate. We have then
\begin{align*}
  \calA_0(v,v) &= \sum_{\K\in\Th} \kappa_{\K}\|\nabla v\|_{0,\K}^{2} + \alpha\sum_{e\in\Eh} \kappa_{e}h_e^{-1}\|\calP^{0}_e(\jump{v})\|_{0,e}^{2} -(1-\theta)\langle \av{\kappa \nabla v}_{\beta_e}, \jump{v}\rangle_{\Eh} &&\\
  &\ge \triplenorm{v}_{DG}^{2} -|1-\theta| \left|\langle \av{\kappa \nabla u}_{\beta_e}, \calP^{0}_e(\jump{v})\rangle_{\Eh}\right| &&\\
  &\ge \left(1-
    \frac{8C_t(1+C^{2}_{inv})}{\alpha}\right)\sum_{\K\in\Th}
  \kappa_{\K}\|\nabla v\|_{0,\K}^{2} +
  \frac{4-|1-\theta|}{4}\alpha\sum_{e\in\Eh}
  \kappa_{e}h_e^{-1}\| \calP^{0}_e(\jump{v})\|_{0,e}^{2}\;, &&
\end{align*}
and \eqref{eq:coer} follows immediately by taking $\alpha\geq 1$ large
enough (if $\theta\ne 1$). Moreover, notice that both constants in
\eqref{eq:cont} and \eqref{eq:coer} depend on the shape regularity of
the mesh partition but are independent of the coefficient $\kappa$. \\

Obviously, continuity and coercivity also hold for the IP($\beta$)-1 methods~\eqref{ipA} if the norm \eqref{def-normN}  is replaced by
\begin{equation}\label{def-norm1}
 \triplenorm{v}_{DG}^{2}:= \dyle\sum_{T\in \Th} \kappa_{T}\|\nabla v\|_{0,T}^{2}+ \dyle\sum_{e\in \calE_{h}}\kappa_{e}h^{-1}_{e}\|\jump{v}\|_{0,e}^{2}.
\end{equation}
See \cite{Dryja2003} or \cite{ahzz:tech} for a detailed proof. For both families of methods, optimal error estimates in the energy norms \eqref{def-normN} and  \eqref{def-norm1}  can be shown, arguing as in~\cite{ArnoldDBrezziFCockburnBMariniL-2001aa}. See also \cite{abm} for further discussion on the $L^{2}$-error analysis of these methods.

\section{Space decomposition of the $V_{h}^{DG}$ space}
\label{sec:Vdecomp}

In this section, we introduce a decomposition of the $V_{h}^{DG}$-space
that will play a key role in the design of the solvers for the DG
discretizations \eqref{ipA} and \eqref{ipA0}. In
\cite{Ayuso-de-DiosBZikatanovL-2009aa, BurmanEStammB-2008aa}, it is shown that the
discontinuous piecewise linear finite element space $ V_{h}^{DG}$ admits
the decomposition: $V_{h}^{DG} = V_{h}^{CR} \oplus \calZ$, where
$V_{h}^{CR}$ denotes the standard Crouzeix-Raviart space defined as
\begin{equation}\label{defCR}
V_{h}^{CR}=\left\{ v\in L^{2}(\Omega) \, : \, v_{|_{\K}}\,
\in\, \mathbb{P}^{1}(\K) \,\, \forall \K \in \Th\, \mbox{   and    }
\calP_{e}^{0} 
(\jump{v}\cdot \n)=0 \,\, \forall\, e\in \Eho\right\},
\end{equation}
and the complementary space $\calZ$ is a space of piece-wise linear functions
with average zero at the mass centers of the internal edges/faces: 
\begin{equation*}
\calZ=\left\{ z\in L^{2}(\Omega) \,\, : \,\, z_{|_{\K}}\,\,
\in\,\, \mathbb{P}^{1}(\K) \,\, \forall \K \in \Th\, \mbox{    and    }
\calP_{e}^{0} 
(\av{v})=0, \,\, \forall\, e\in \Eho \right\}.
\end{equation*}
In~\cite{Ayuso-de-DiosBZikatanovL-2009aa}, it was shown that this
decomposition satisfies $\calA_0(v,z)=0$ when $\kappa\equiv 1$, for all $v\in V_{h}^{CR}$
and $z\in \calZ$.
We now modify the definition of $\calZ$ above in
order to account for the presence of a coefficient in the
problem~\eqref{eqn:model}. Let
\begin{equation}\label{defZ}
\calZ_{\beta}=\left\{ z\in L^{2}(\Omega) \,\, : \,\, z_{|_{\K}}\,\,
\in\,\, \mathbb{P}^{1}(\K) \,\, \forall \K \in \Th\, \mbox{    and    }
\calP_{e}^{0} (
\av{z}_{1-\beta_{e}})=0, \,\, \forall\, e\in \Eho \right\},
\end{equation}
where the weight $\beta_{e}$ was defined earlier in~\eqref{def:beta}. Note 
that the weight $\beta_{e}$ depends on the coefficient $\kappa$,
and, as a consequence, 
the space $\calZ_{\beta}$ is also coefficient dependent.
In what follows, we shall show that $\calZ_{\beta}$ is a space complementary to
$V_{h}^{CR}$ in $V_{h}^{DG}$ and the corresponding decomposition has
properties analogous to the properties of the decomposition 
$V_{h}^{DG}=V_{h}^{CR}\oplus\calZ$ given
in~\cite{Ayuso-de-DiosBZikatanovL-2009aa}  for the Poisson problem.
 
For any $e\in \Eh$ with $e\subset T\in \Th$, let
$\varphi_{e,T}$ be the canonical Crouzeix-Raviart basis function on
$T$, which is defined by
$$
	\varphi_{e,T}|_{T} \in \mathbb{P}^{1}(T), \quad 
	\varphi_{e,T}(m_{e'}) = \delta_{e,e'}\;\; \forall e'\in \Eh(T),
	\mbox{  and  } \varphi_{e,T} (x) =0\;\; \forall x\not\in T,
$$
where $m_{e}$ is the mass center of $e$.
We will denote by $n_{T}$ and $n_{E}$ 
the number of simplices and faces (or edges when $d=2$)
respectively. 
We also denote by $n_{BE}$ the number of boundary faces.
\begin{proposition}\label{teo0}
For any $u\in V_{h}^{DG}$ there exists a unique $v\in V_{h}^{CR}$ and a unique $z_{\beta}\in \calZ_{\beta}$ such that $u=v+z_{\beta}$ , that is
\begin{equation}\label{splitting0}
V_{h}^{DG}=V_{h}^{CR}\oplus \calZ_{\beta} .
\end{equation}
\end{proposition}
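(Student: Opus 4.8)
The plan is to count dimensions and establish injectivity of the natural map, which is the standard strategy for showing that two subspaces form a direct sum filling the whole space. First I would compute $\dim V_h^{DG} = (d+1)\,n_T$ (since on each simplex a piecewise linear function has $d+1$ degrees of freedom, with no continuity constraints). Next I would recall that $\dim V_h^{CR} = n_E$: the Crouzeix-Raviart space is characterized by the $n_{BE}$ boundary constraints $\calP^0_e(\jump{v}\cdot\n)=0$ and $n_E - n_{BE}$ interior matching constraints, and it is classical that these are independent, giving one degree of freedom per face. Then I would show $\dim \calZ_\beta = (d+1)\,n_T - n_E + n_{BE}$ by observing that $\calZ_\beta$ is cut out of $V_h^{DG}$ by exactly $n_E - n_{BE}$ linear functionals $z \mapsto \calP^0_e(\av{z}_{1-\beta_e})$, one per interior face; the key subpoint is that these functionals are linearly independent on $V_h^{DG}$, which follows because each functional involves a face $e$ and the two simplices sharing it, and one can test against a function supported on a single simplex to see no nontrivial relation among them. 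Using Euler's relation for a conforming simplicial mesh (namely $(d+1)n_T = 2 n_E - n_{BE}$ in $d=2$, and the analogous identity in $d=3$, which gives $(d+1)n_T - n_E + n_{BE} = n_E$), one concludes $\dim V_h^{CR} + \dim \calZ_\beta = n_E + n_E = (d+1)n_T = \dim V_h^{DG}$.

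Having matched dimensions, it then suffices to prove $V_h^{CR}\cap \calZ_\beta = \{0\}$, and the sum is automatically direct and exhausts $V_h^{DG}$. So suppose $w \in V_h^{CR}\cap\calZ_\beta$. On each interior face $e = \partial T^+\cap\partial T^-$ with weight $\beta_e = \kappa^-/(\kappa^++\kappa^-)$, the condition $w\in V_h^{CR}$ gives $\calP^0_e(\jump{w}\cdot\n)=\calP^0_e(w^+ - w^-)=0$ (taking $\n = \n^+$), while $w\in\calZ_\beta$ gives $\calP^0_e(\av{w}_{1-\beta_e}) = \calP^0_e\big((1-\beta_e)w^+ + \beta_e w^-\big)=0$. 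Writing $a = \calP^0_e w^+$ and $b = \calP^0_e w^-$, these read $a - b = 0$ and $(1-\beta_e)a + \beta_e b = 0$; since $1-\beta_e = \kappa^+/(\kappa^++\kappa^-) > 0$ and $\beta_e > 0$, adding and subtracting gives $a = b = 0$. Hence $\calP^0_e w^+ = \calP^0_e w^- = 0$ on every interior face; on boundary faces the $V_h^{CR}$ constraint directly forces $\calP^0_e(w) = 0$. Thus $w$ is a piecewise linear function whose edge-midpoint averages (which for $\mathbb{P}^1$ functions equal the point values at the mass centers) vanish at every $m_e$, $e\in\Eh$; since the values at the $d+1$ face centroids of a simplex uniquely determine a $\mathbb{P}^1$ function, $w\equiv 0$ on each $T$, so $w = 0$.

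The main obstacle I expect is the bookkeeping around the linear independence of the $\calZ_\beta$-defining functionals and the correct Euler relation in both $d=2$ and $d=3$ — i.e.\ making the dimension count airtight — since everything else (the $2\times 2$ system on each face, the fact that $\beta_e,1-\beta_e\in(0,1)$ because $\kappa^\pm>0$, and the unisolvence of $\mathbb{P}^1$ at face centroids) is elementary. An alternative that sidesteps the global dimension count is purely local and constructive: show directly that for $u\in V_h^{DG}$ one can solve, simplex by simplex in a way that is consistent across faces, for $v\in V_h^{CR}$ and $z_\beta\in\calZ_\beta$ with $u = v + z_\beta$; concretely, on each interior face set $\calP^0_e v^\pm$ to be the common value forced by the two constraints applied to $u$, which is uniquely solvable by the same nonsingular $2\times2$ system, and then $v$ is the unique CR function with these midpoint values and $z_\beta := u - v$ lies in $\calZ_\beta$ by construction — this simultaneously gives existence, uniqueness, and the direct sum, and mirrors exactly the argument of \cite{Ayuso-de-DiosBZikatanovL-2009aa} for the Poisson case with $\beta_e\equiv 1/2$.
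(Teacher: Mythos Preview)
Your approach---dimension count plus trivial intersection---is different from the paper's, which is purely constructive: it rewrites an arbitrary $u\in V_h^{DG}$ in the basis $\{\varphi_{e,T}\}$ and regroups using $\beta^{+}+\beta^{-}=1$ to obtain explicitly $v=\sum_{e\in\Eho}\calP_e^0(\av{u}_{1-\beta_e})\,\varphi_e^{CR}$ and $z_\beta=\sum_{e\in\Eh}\calP_e^0(\jump{u}\cdot\n^{+})\,\psi_e^{z}$, where $\varphi_e^{CR}=\varphi_e^{+}+\varphi_e^{-}$ (interior faces only) and $\psi_e^{z}=\beta_e\varphi_e^{+}-(1-\beta_e)\varphi_e^{-}$ on interior faces, $\psi_e^{z}=\varphi_{e,T}$ on boundary faces. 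This is essentially the ``alternative'' you sketch in your last paragraph; its advantage is that it simultaneously delivers the explicit bases used throughout the rest of the paper.

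Your main argument has a bookkeeping slip at the boundary that makes the dimension count fail as written. Since the problem is homogeneous Dirichlet, $V_h^{CR}$ carries one degree of freedom per \emph{interior} face, so $\dim V_h^{CR}=n_E-n_{BE}$; indeed your own listing of $n_{BE}$ boundary constraints plus $n_E-n_{BE}$ interior matching constraints on a space of dimension $(d+1)n_T$ yields $(d+1)n_T-n_E=n_E-n_{BE}$, not $n_E$. On the other side, $\calZ_\beta$ is defined by only the $n_E-n_{BE}$ interior functionals, so $\dim\calZ_\beta=(d+1)n_T-(n_E-n_{BE})=n_E$ (the boundary functions $\varphi_{e,T}$ for $e\in\Ehb$ lie in $\calZ_\beta$). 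The correct sum is then $(n_E-n_{BE})+n_E=2n_E-n_{BE}=(d+1)n_T$, whereas your line $n_E+n_E=(d+1)n_T$ contradicts the very Euler relation you invoke unless $n_{BE}=0$. Once this is fixed, your trivial-intersection argument---the nonsingular $2\times2$ system on each interior face (using $\beta_e\in(0,1)$), the CR boundary constraint on $\Ehb$, and unisolvence of $\mathbb{P}^1$ at the $d+1$ face centroids---is correct and the proof goes through.
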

\begin{proof}
  For simplicity, throughout the proof we will set
  $\beta^{+}=\beta_{e}$, $\beta^{-}=(1-\beta_{e})$, and
  $\varphi^{\pm}_{e} = \varphi_{e,T^{\pm}}$ for any $e\in \Eho$ with
  $e=\partial T^{+}\cap \partial T^{-}.$ We also denote
  $\varphi_{e}=\varphi_{e,T}$ for any $e\in \Ehb$ with $e=\partial
  T\cap \partial \Omega$. Since the mesh is made of $d$-simplices
$$
\dim V_{h}^{DG} = (d+1)n_{T} = 2n_{E} -n_{BE},
$$ 
and it is also obvious that $\{\varphi_{e}^{\pm}\}_{e\in \Eho}\cup
\{\varphi_{e}\}_{e\in \Ehb}$ form a basis for $V_{h}^{DG}.$ Notice that
$\beta^{+}+\beta^{-}=1$, we can therefore express any $u\in V_{h}^{DG}$ as
\begin{eqnarray*}
 u(x)&=&\sum_{e\in \Eho} u^{+}(m_e)\varphi_{e}^{+}(x)+\sum_{e\in \Eho}
 u^{-}(m_e)\varphi_{e}^{-}(x)+\sum_{e\in \Ehb} u(m_e)\varphi_{e}(x)\\
& =& \sum_{e\in \Eho} (\beta^{-}u^{+}(m_e)+\beta^{+}u^{-}(m_e))(\varphi_{e}^{+}(x)+\varphi_{e}^{-}(x))\\
 &&~~~~+\sum_{e\in \Eho} (u^{+}(m_e)-u^{-}(m_e))(\beta^{+}\varphi_{e}^{+}(x)-\beta^{-}\varphi_{e}^{-}(x)) + \sum_{e\in \Ehb} u(m_e)\varphi_{e}(x)\\
 &=& \sum_{e\in \Eho} \left(\frac{1}{|e|}\int_{e}\av{u}_{1-\beta_{e}} ds
 \right) (\varphi_{e}^{+}(x)+\varphi_{e}^{-}(x)) \\
&& ~~~~+\sum_{e\in \Eho}
\left(\frac{1}{|e|}\int_{e}\jump{u}\n^{+}ds
\right)(\beta^{+}\varphi_{e}^{+}(x)-\beta^{-}\varphi_{e}^{-}(x))  +\sum_{e\in \Ehb}  \left(\frac{1}{|e|}\int_{e}\jump{u}\n ds \right) \varphi_{e}(x)\\
 & =& v(x)+z_{\beta}(x). 
 \end{eqnarray*}
Then for each $e\in \Eho$,  we set
\begin{equation}
\label{basis-CR}
\varphi_{e}^{CR}(x) := \varphi_{e}^{+} (x) + \varphi_{e}^{-}(x),
\end{equation}
 \begin{equation}\label{basis-z}
 \psi_{e}^{z}(x) := \beta^{+}\varphi_{e}^{+}(x)-\beta^{-}\varphi_{e}^{-}(x) =\left\{
\begin{array}{rl}
\beta^{+}\varphi_{e}^{+}(x), &\quad x\in \K^{+}\\
- \beta^{-}\varphi_{e}^{-}(x), &\quad x\in \K^{-}
\end{array}\right. ,
 \end{equation}
 and $\psi_{e}^{z}(x) := 0$ for all $x\not\in T^{+}\cup T^{-}.$
 In the definition \eqref{basis-z} of $\psi_{e}^{z}(x)$, we have used 
 $\varphi_{e}^{-}(x)=0$ for $x\in \K^{+}$ and
 $\varphi_{e}^{+}(x)=0$ for $x\in \K^{-}.$
Finally, when $e\in \Ehb$ with $e = \partial\K\cap \partial\Omega$
for some $\K$,  we set
\begin{equation}\label{basis-zb}
\psi_{e}^{z}(x)=\varphi_{e}(x),\quad \forall x\in T.
\end{equation}
It is then straightforward to check that  
\begin{equation*}
V_{h}^{CR}=\operatorname{span}\{\varphi_{e}^{CR}\}_{e\in  \Eho}, \quad  \mbox{and}\quad
\calZ_{\beta}=\operatorname{span}\{\psi^{z}_{e}\}_{e\in \Eh}.
\end{equation*}
Hence, for all $u\in V_{h}^{DG}$ there exist unique $v\in V_{h}^{CR}$ and $z_{\beta}\in \calZ_{\beta}$ defined by 
\begin{eqnarray*}
v&=&\sum_{e\in \Eho}
\left(\frac{1}{|e|}\int_{e}\av{u}_{1-\beta_{e}}ds
\right)\varphi^{CR}_{e}(x) \in V_{h}^{CR}, \\
z_{\beta}&=&\sum_{e\in \Eh}  
\left(\frac{1}{|e|}\int_{e}\jump{u}\n^{+}ds \right) \psi^{z}_{e}(x) \in \calZ_{\beta},
\end{eqnarray*}
such that $u=v+z_{\beta}$. This shows~\eqref{splitting0}
and concludes the proof.
\end{proof}

\begin{remark}\label{rem:ZB}
As we pointed out in the introduction, the definition of the
  subspace $\calZ_{\beta}$  clearly depends on the coefficient $\kappa$,
  since $\beta$ depends on $\kappa$. Such dependence is often also
  seen in algebraic multigrid analysis, where the coarse spaces
  depend on the operator at hand.  They are in fact explicitly constructed 
  in this way, the aim being to increase robustness of the methods.
\end{remark}

In the proof of Proposition~\ref{teo0} above, we have introduced the basis
in both $V_{h}^{CR}$ and $\calZ_{\beta}$. The canonical Crouzeix-Raviart
basis functions  $\{\varphi_{e}^{CR}\}_{e\in \Eho}$ are continuous at the 
mass centers $m_e$ of the faces $e\in \Eho$.
The basis $\{\psi^{z}_{e}\}_{e\in \Eh}$ in $\calZ_{\beta}$
consists of piecewise $\mathbb{P}^{1}$ functions, which are discontinuous 
across the faces in $\Eh.$ 
In fact, for any $z\in \calZ_{\beta}$ such that
$z=\sum_{e\in \Eh} z_e\psi^{z}_{e}$ with $z_e\in \Reals{}$,  
we have
$$(\jump{z}\n^+)(m_{e^{\prime}}) =z_{e^{\prime}},\qquad\forall e^{\prime}\in \Eh.$$
To see this,  evaluating the jump of $z$ at  $m_{e^{\prime}}$ gives
\begin{eqnarray*}
(\jump{z}\n^+)(m_{e^{\prime}}) 
&=&\sum_{e\in \Eh} z_e
(\jump{\psi^{z}_{e}}\n^+)(m_{e^{\prime}})
=z_{e^{\prime}} (\jump{\psi^{z}_{e^{\prime}}}\n^+)(m_{e^{\prime}})\nonumber\\
&=&
\left\{
\begin{array}{ll}
z_{e^{\prime}} (\beta_{e^{\prime}}- (\beta_{e^{\prime}}-1))=
z_{e^{\prime}}, &\quad e^{\prime}\in
\Eho, \\
z_{e^{\prime}}, &\quad e^{\prime}\in
\Ehb. 
\end{array}
\right. 
\end{eqnarray*}
This relation will also be used later to obtain uniform diagonal preconditioners for the restrictions of
$\calA(\cdot,\cdot)$ and $\calA_0(\cdot,\cdot)$ on $\calZ_{\beta}$. 
\begin{remark}
  For mixed boundary value problems, that is, $\partial\Omega$ contains both Neumann boundary $\Gamma_{N}\neq \emptyset$ and Dirichlet boundary $\Gamma_{D}$ with $\partial\Omega=\Gamma_D\cup\Gamma_N$,  the definition of the basis functions on the boundary faces [see \eqref{basis-zb}] needs to be changed as:
\begin{equation}\label{basis-zbcr}
\begin{array}{lclll}
\phi_{e}^{CR}(x)&=&\varphi_{e,\K}(x),&\quad
e=\partial\K\cap\Gamma_N,&\quad \mbox{for all}\quad 
x\in T,\\
\psi_{e}^{z}(x)&=&\varphi_{e,\K}(x), &\quad e = \partial\K\cap \Gamma_D,&
\quad \mbox{for all}\quad  x\in T.
\end{array}
\end{equation}
Thus, in case $\Gamma_N\neq \emptyset$ 
the dimension of $V_{h}^{CR}$ is
increased (by adding to it functions that correspond to degrees of
freedom on $\Gamma_N$) and the dimension of $\calZ_\beta$ is decreased
accordingly. Clearly things balance out correctly: the identity
$V_{h}^{DG}=V_{h}^{CR}\oplus \calZ_{\beta}$ holds, and also the analysis
carries over with very little modification. 
\end{remark}
Next lemma is a simple but key observation used in the design of
efficient solvers. 
\begin{lemma}\label{le:orto}
  Let $\calA_0(\cdot,\cdot )$ be the bilinear form
  defined in~\eqref{ipA0}. Then,
\begin{equation}\label{orto:1}
\calA_0(v,z)=0 \qquad \forall\, v \in V_{h}^{CR},\quad \forall\, z \in \calZ_{\beta} .
\end{equation}
Furthermore if $\calA_0(\cdot,\cdot )$ is symmetric (and positive
definite) then
 the decomposition \eqref{splitting0} is
$\calA_{0}$-orthogonal, namely, $V_{h}^{CR}\,\, \perp_{\calA_0}\,\, \calZ_{\beta}$.
\end{lemma}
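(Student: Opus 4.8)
The plan is to use the weighted residual formulation \eqref{ip:res0} of $\calA_0(\cdot,\cdot)$, which is the most convenient tool since it isolates the jump terms against the projection $\calP^0_e(\mathcalB_1(\cdot))$. First I would take any $v\in V_{h}^{CR}$ and any $z\in\calZ_\beta$ and compute $\calA_0(v,z)$ by plugging $v$ into the first slot of \eqref{ip:res0}. The bulk term $(-\nabla\cdot(\kappa\nabla v),z)_{\Th}$ vanishes because $v$ is piecewise linear, so $\kappa\nabla v$ is piecewise constant and its elementwise divergence is zero. The interior-face term $\langle\jump{\kappa\nabla v},\av{z}_{1-\beta_e}\rangle_{\Eho}$ is where the coefficient-dependent weight enters: since $\kappa\nabla v$ is piecewise constant on each simplex, $\jump{\kappa\nabla v}$ is a constant vector on each $e\in\Eho$, hence on that face $\langle\jump{\kappa\nabla v},\av{z}_{1-\beta_e}\rangle_e = \jump{\kappa\nabla v}\cdot\bigl(\int_e \av{z}_{1-\beta_e}\,ds\bigr) = |e|\,\jump{\kappa\nabla v}\cdot\calP^0_e(\av{z}_{1-\beta_e})$, and this is zero precisely by the defining condition $\calP^0_e(\av{z}_{1-\beta_e})=0$ in \eqref{defZ}.

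The remaining term is $\langle\jump{v},\calP^0_e(\mathcalB_1(z))\rangle_{\Eh}$. Here I would use that $v\in V_{h}^{CR}$ means $\calP^0_e(\jump{v}\cdot\n)=0$ on every interior face, together with the fact that $\calP^0_e$ is the $L^2(\Eh)$-orthogonal projection onto piecewise constants, so $\langle\jump{v},\calP^0_e(\mathcalB_1(z))\rangle_{\Eho} = \langle\calP^0_e(\jump{v}),\calP^0_e(\mathcalB_1(z))\rangle_{\Eho} = 0$. For boundary faces $e\in\Ehb$ there is a small subtlety: $v\in V_{h}^{CR}$ as defined in \eqref{defCR} does not impose $\calP^0_e(\jump{v})=0$ on $\Ehb$, so one must check the boundary contribution separately. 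On a boundary face $\jump{v}=v\n$ and $\mathcalB_1(z)=\theta\avtau\kappa\nabla z\cdot\n + \alpha h_e^{-1}\kappa_e z\n$ — but this is exactly cancelled against the corresponding boundary term structure; more cleanly, one can instead argue directly from the original form \eqref{ipA0}: the boundary faces contribute to $\calA_0$ through $(\kappa\nabla v,\nabla z)$, the average terms, and the $\calP^0_e(\jump{v})\jump{z}$ penalty, and a direct check on the single-element boundary contributions (using that $\psi_e^z$ for $e\in\Ehb$ coincides with a CR basis function and $\varphi_e^{CR}$ is supported on one element) shows these vanish as well. I expect the boundary bookkeeping to be the only place requiring care; in the interior everything reduces transparently to the two defining projection conditions of $V_{h}^{CR}$ and $\calZ_\beta$.

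Finally, for the symmetric case ($\theta=-1$): once \eqref{orto:1} is established, symmetry of $\calA_0$ gives $\calA_0(z,v)=\calA_0(v,z)=0$ as well, so $\calA_0(v,z)=0$ for all $v\in V_{h}^{CR}$, $z\in\calZ_\beta$ in both orders, which is exactly the statement that the direct sum decomposition \eqref{splitting0} is $\calA_0$-orthogonal. (When $\calA_0$ is moreover positive definite it is a genuine inner product, so $\perp_{\calA_0}$ is meaningful.) The main obstacle, as noted, is not conceptual but the careful treatment of boundary faces, where the asymmetry between the definitions of $V_{h}^{CR}$ and $\calZ_\beta$ on $\Ehb$ must be reconciled; I would handle this by unwinding \eqref{ipA0} term-by-term on boundary simplices rather than relying on the residual form there.
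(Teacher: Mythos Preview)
Your approach is essentially identical to the paper's: both use the weighted residual form \eqref{ip:res0}, kill the volume term because $\kappa\nabla v$ is piecewise constant, kill the $\langle\jump{\kappa\nabla v},\av{z}_{1-\beta_e}\rangle_{\Eho}$ term via the defining condition of $\calZ_\beta$, and kill the $\langle\jump{v},\calP^0_e(\mathcalB_1(z))\rangle_{\Eh}$ term via the defining condition of $V_h^{CR}$, then invoke symmetry for the last claim.

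The only difference is that you flag a boundary subtlety that the paper does not worry about, and in fact need not worry about. The space $V_h^{CR}$ here is (see the proof of Proposition~\ref{teo0} and the dimension count $\dim V_h^{CR}=n_E-n_{BE}$) exactly $\operatorname{span}\{\varphi_e^{CR}\}_{e\in\Eho}$, i.e.\ the Crouzeix--Raviart space with the homogeneous Dirichlet condition built in. Since each $\varphi_e^{CR}$ with $e\in\Eho$ vanishes at the barycenter of every other face, in particular at every boundary face barycenter, any $v\in V_h^{CR}$ satisfies $v(m_e)=0$ for $e\in\Ehb$, hence $\calP^0_e(\jump{v})=\calP^0_e(v\,\n)=0$ there as well. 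Thus $\langle\jump{v},\calP^0_e(\mathcalB_1(z))\rangle_{\Eh}=\langle\calP^0_e(\jump{v}),\mathcalB_1(z)\rangle_{\Eh}=0$ holds uniformly over all of $\Eh$, and no separate boundary bookkeeping is required.
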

\begin{proof}
From the weighted-residual form of $\calA_0(\cdot,\cdot)$ given
  in~\eqref{ip:res0}, for all $v\in V_{h}^{CR}$, and all $z\in
  \calZ_{\beta}$ we easily obtain
\begin{equation*}
 \begin{aligned}
 \calA_0(v,z)&=(-\nabla\cdot (\kappa\nabla v), z)_{\Th} +\langle \jump{\kappa\nabla  v},
 \av{z}_{1-\beta_{e}}\rangle_{\Eho} +\langle \jump{v},
 \calP^{0}_{e}(\mathcalB_{1}(z))\rangle_{\Eh}=0. 
\end{aligned}
 \end{equation*}
 In the equation above, the first term is zero due to the fact that $v\in V_h^{CR}$,
 so $v$ is linear in each $\K$, and the coefficient $\kappa \in \mathbb{P}^{0}(\K)$. Last term vanishes (independently
 of the choice of $\theta$, or equivalently the choice of
 $\mathcalB_1(v)$), because $\langle \jump{v},
 \calP^{0}_{e}(\mathcalB_{1}(z))\rangle_{\Eh}=0$, thanks to the definition \eqref{defCR} of $V_{h}^{CR}$.  The second term vanishes from the definition of
 $\calZ_{\beta}$ (since $\jump{\kappa\nabla v}$ is constant on each
 $e\in \Eho$). Moreover, in the case when $\calA_0(\cdot ,\cdot)$ is
 symmetric and positive definite we have that
 $\calA_0(v,z)=\calA_0(z,v)$, for all $v\in V_{h}^{CR}$ and for all
 $z\in \calZ_{\beta}$. Thus, for the symmetric method
 $\calA_0(\cdot,\cdot)$, the spaces $V_{h}^{CR}$ and $\calZ_{\beta}$
 are indeed $\calA_0$-orthogonal. The proof is complete.
\end{proof}

\section{Solvers for IP($\beta$)-0 methods}
\label{sec:solvers}

In this section we show how Proposition \ref{teo0} and Lemma
\ref{le:orto} can be used in the design and analysis of uniformly
convergent iterative methods for the IP($\beta$)-0 methods.
We follow the ideas and analysis introduced in
\cite{Ayuso-de-DiosBZikatanovL-2009aa} 
and point out the differences.
We first consider the approximation to problem \eqref{eqn:model} with
$\calA^{DG}(\cdot,\cdot )=\calA_0(\cdot,\cdot)$.
To begin, 
let $A_0$ be the discrete operator defined by $(A_0 u,w) =
\calA_0(u,w)$ and let $\mathbb{A}_0$ be its matrix representation in
the new basis \eqref{basis-CR} and \eqref{basis-z}. We denote by $\mathbf{u}=[\mathbf{z},\mathbf{v}]^{T}$, $\mathbf{f}=[\mathbf{f_z},\mathbf{f_v}]^{T}$ the vector representation of the unknown function $u$ and of the right
hand side $f$, respectively, in this new basis. A simple consequence
of Lemma \ref{le:orto} is that the matrix $\mathbb{A}_0$ (in this
basis) has block lower triangular structure:
 \begin{equation}\label{Acal0}
\mathbb{A}_0= \left[ \begin{array}{cccc}
&\mathbb{A}^{zz}_0 & \mathbf{0} &\\
& \mathbb{A}^{vz}_0 & \mathbb{A}^{vv}_0 &
\end{array}\right], 
\end{equation}
where $\mathbb{A}^{zz}_0, \mathbb{A}^{vv}_0$ are the matrix
representation of $A_0$ restricted to the subspaces $\calZ_{\beta}$
and $V_{h}^{CR}$, respectively, and $\mathbb{A}^{vz}_0$ is the matrix
representation of the term that accounts for the coupling (or
non-symmetry) $\calA_0(\psi^z,\varphi^{CR})$.  
As remarked earlier, for SIPG$(\beta)$-0, the
stiffness matrix $\mathbb{A}_0$ is block-diagonal.

Figure~\ref{fig:domain2d} gives a 2D example, with two squares $\Omega_{1}=[-0.5,0]^{2}$
and $\Omega_{2}=[0,0.5]^{2}$ inside the domain $\Omega= [-1,1]^{2}.$
We set the coefficients $\kappa(x) =1$ for all $x\in
\Omega_{1}\cup\Omega_{2}$ and $\kappa(x) =10^{-3}$ for $x\in
\Omega\setminus(\Omega_{1}\cup \Omega_{2})$.
\begin{figure}[htbp]
        \includegraphics[width=0.4\textwidth]{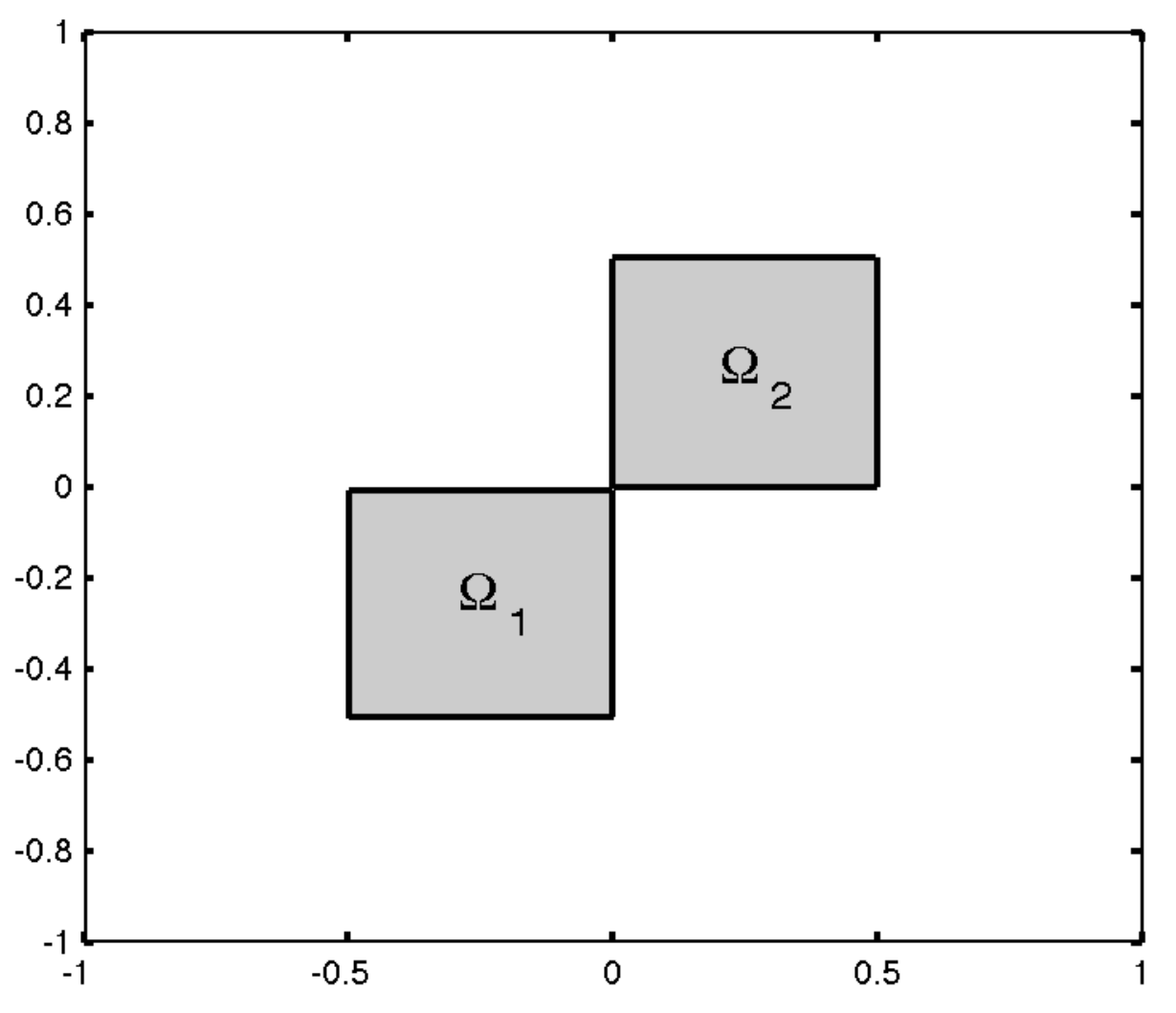}\hskip 0.8cm
        \includegraphics[width=0.47\textwidth]{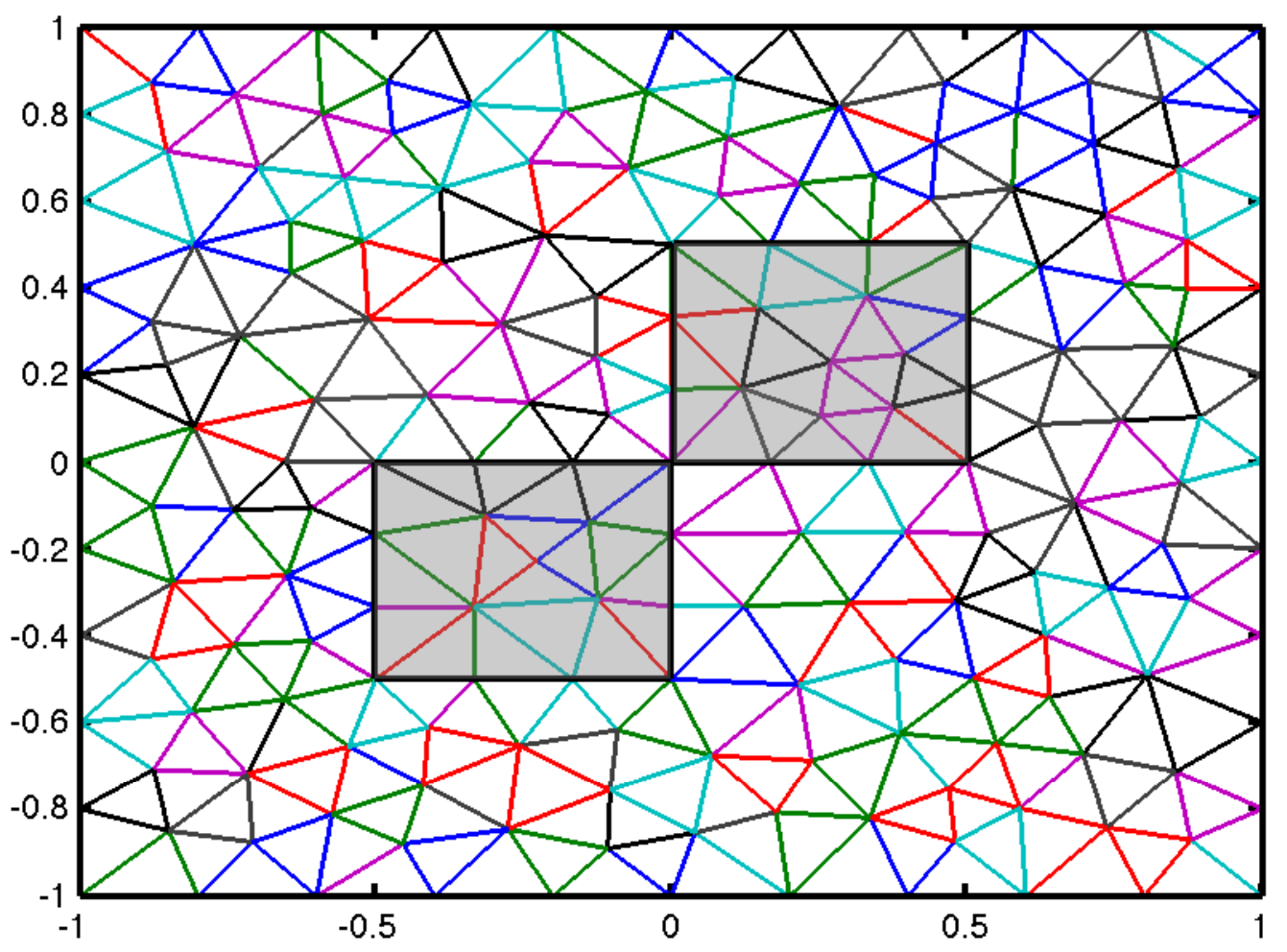}
\caption{\it Computational domain and unstructured mesh.\label{fig:domain2d}}
\end{figure}
Figures~\ref{fig0b}  and ~\ref{fig0a} show  the sparsity patterns of the 
IP$(\beta)$-0 methods with standard nodal basis and the basis~\eqref{basis-CR}-\eqref{basis-z}, respectively.

\begin{figure}[!htp]
        \begin{center}
                \includegraphics[width=4.2cm]{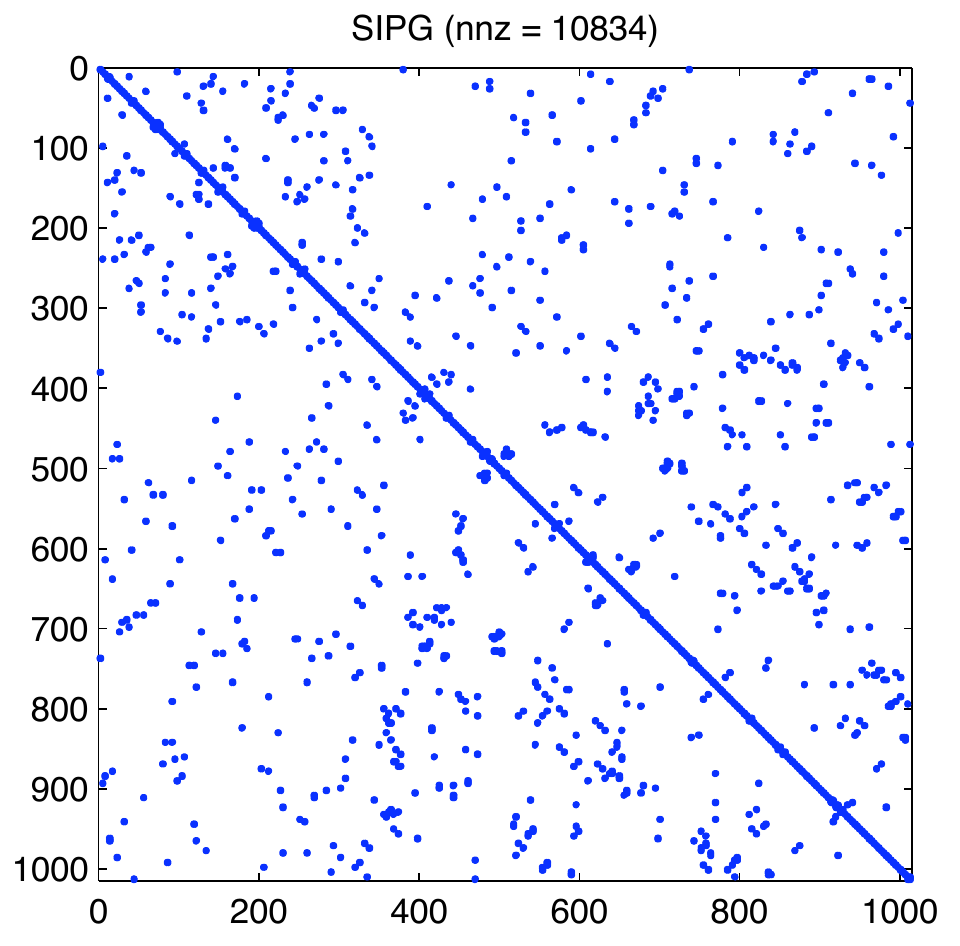}
        \hskip 0.06cm
                \includegraphics[width=4.2cm]{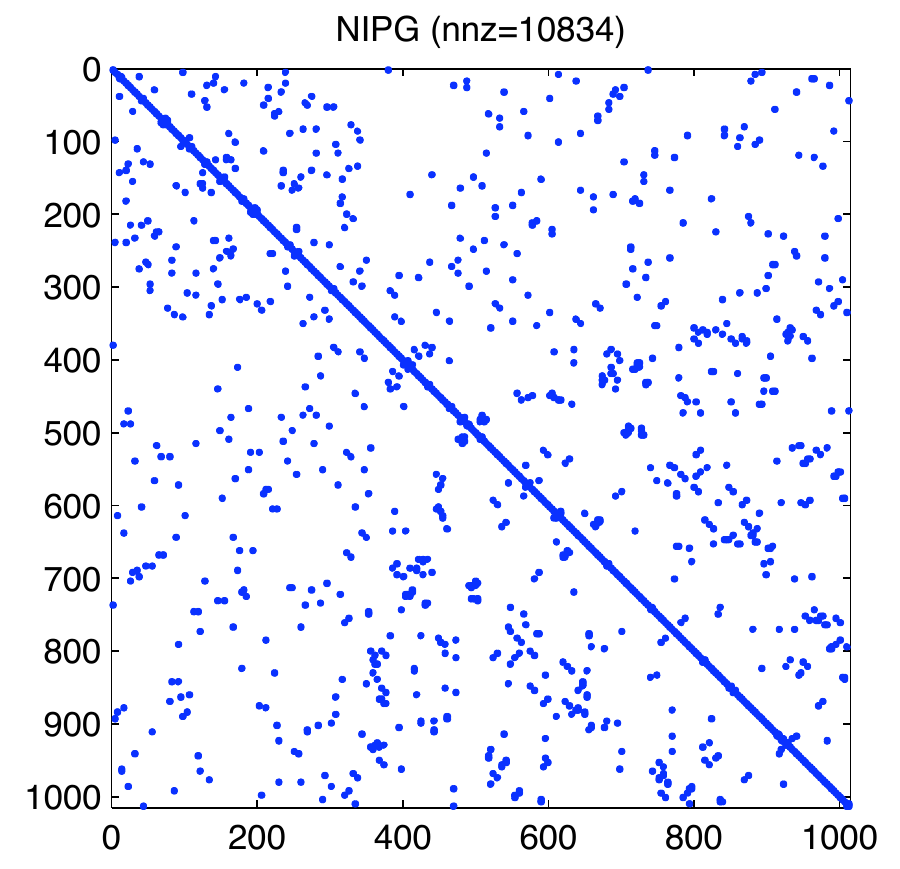}
            \hskip 0.07cm
             \includegraphics[width=4.2cm]{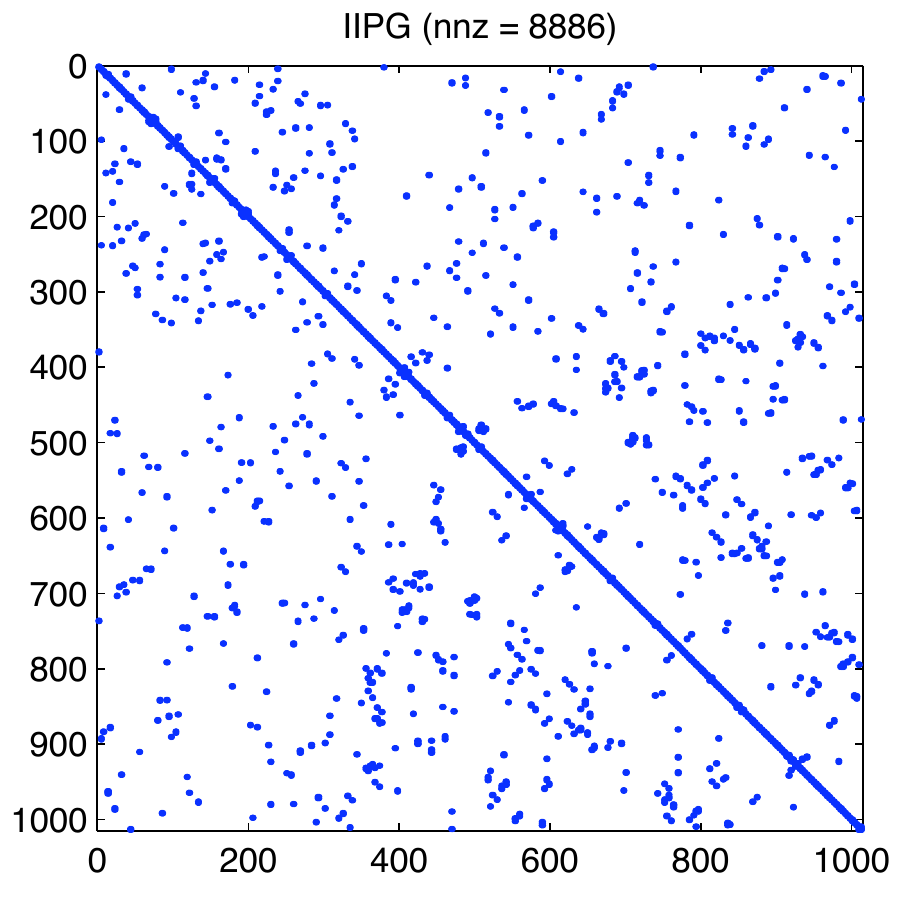}
        \end{center}
             \caption{Non-zero pattern of the matrix representation in the standard nodal basis of the operators associated with IP($\beta$)-0 methods. From left to right: SIPG, NIPG and IIPG methods.}
      \label{fig0b}
          \end{figure}
\begin{figure}[!htp]
        \begin{center}
                \includegraphics[width=4.2cm]{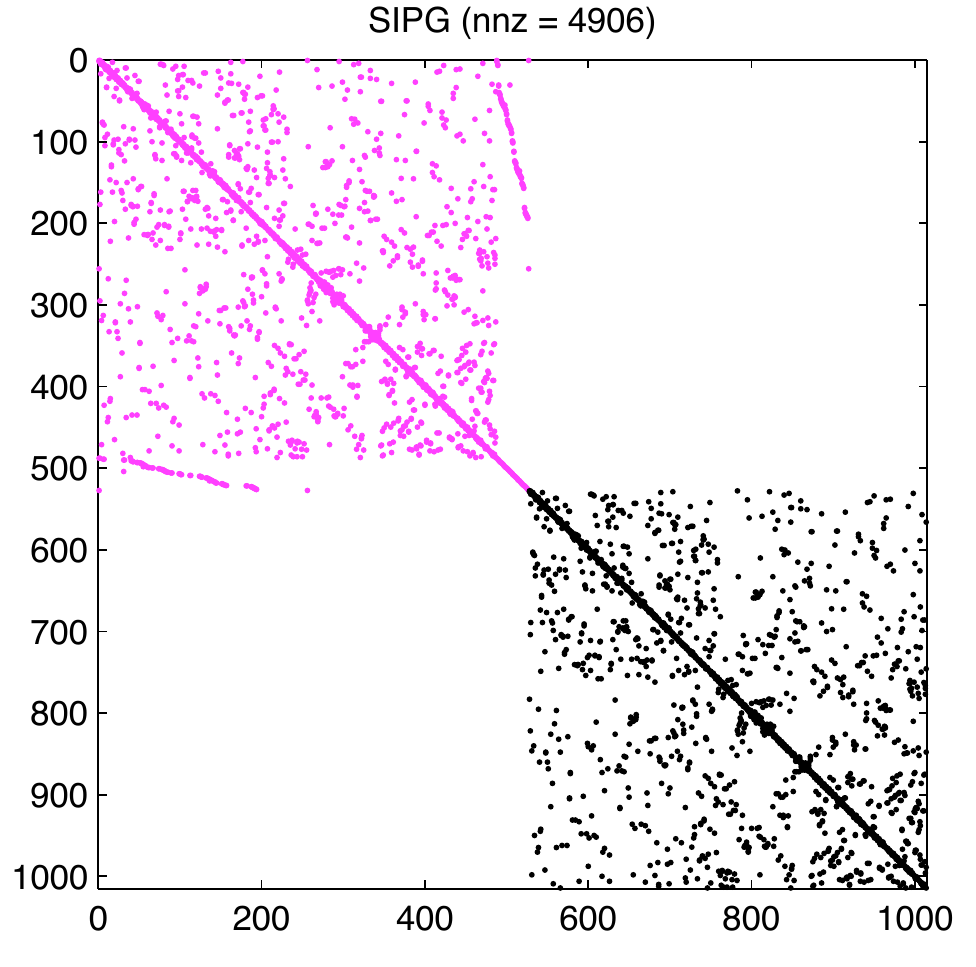}
        \hskip 0.06cm
                \includegraphics[width=4.2cm]{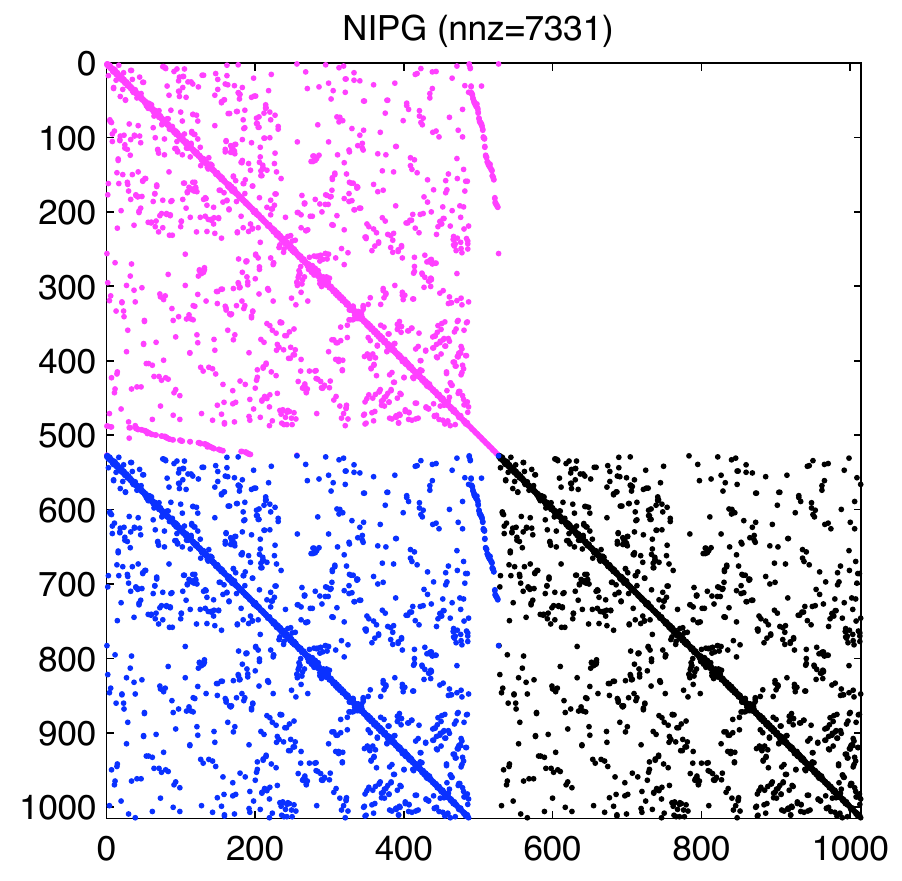}
            \hskip 0.07cm
             \includegraphics[width=4.2cm]{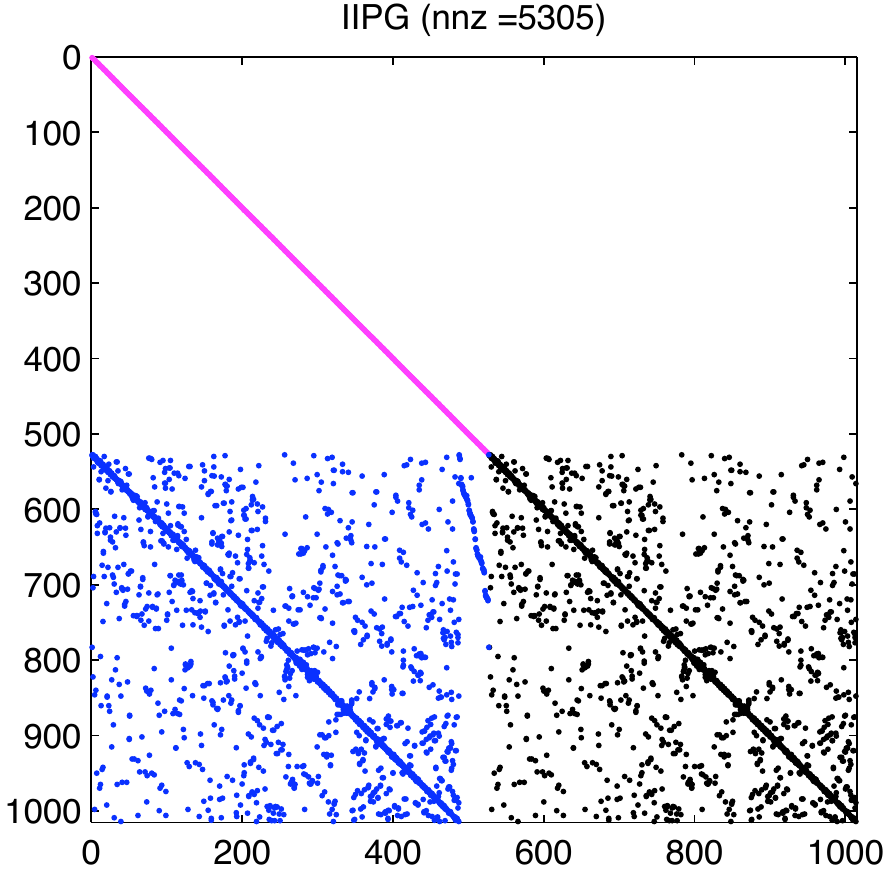}
        \end{center}
             \caption{Non-zero pattern of the matrix representations according to the basis splitting~\eqref{splitting0} of the operator associated with IP($\beta$)-0 methods, i.e., $\mathbb{A}_0$. From left to right: SIPG, NIPG and IIPG methods.}
      \label{fig0a}
\end{figure}

Clearly, as in the constant coefficient case, a simple algorithm based 
on a block version
of forward substitution provides an exact solver for the solution of
the linear systems with coefficient matrix $\mathbb{A}_0$. A formal
description of this block forward substitution is given as the next Algorithm.
\begin{algorithm}[Block Forward Substitution]\label{algo0}
~
\begin{itemize} 
\item[1.] Find $z\in \calZ_{\beta}$ such that $\calA_0(z,\psi) = (f,\psi)_{\Th}$  for
  all $\psi\in \calZ_{\beta}$\;

\item[2.] Find $v\in V_{h}^{CR}$ such that $\calA_0(v,\varphi) =
  (f,\varphi)_{\Th}-\calA_0(z,\varphi)$  for all $\varphi\in V_{h}^{CR}$\;

\item[3.] Set $u=z+v$\;
\end{itemize}
\end{algorithm}

The above algorithm requires the solution of
$\calA_0(\cdot,\cdot)$ on $\calZ_{\beta}$ (Step 1. of the algorithm)
and the solution of $\calA_0(\cdot,\cdot)$ on $V_h^{CR}$ (Step~2.\ of
the algorithm). Unlike the situation in
\cite{Ayuso-de-DiosBZikatanovL-2009aa}, due to the jump coefficient
in \eqref{eqn:model}, the solution on $V_{h}^{CR}$ is more involved, and
therefore we postpone its discussion and analysis until Section
\ref{sec:crsolver}.  We next discuss the solution on $\calZ_{\beta}$.


\subsection{Solution on $\calZ_{\beta}$} 
In this section we describe the main properties of the IP($\beta$)-0 methods when restricted to the $\calZ_{\beta}$, which will in turn indicate how the solution of Step~1.\ of Algorithm \ref{algo0} can be efficiently done. 

The first result in this subsection establishes the symmetry of the restrictions of
the bilinear forms of the IP($\beta$)-0 methods to $\calZ_{\beta}$.
\begin{lemma}\label{symZZ}
  Let $\calA_0(\cdot,\cdot)$ be the bilinear form of a 
   IP($\beta$)-0 method as defined in \eqref{ipA0}. Then, the restriction to $\calZ_{\beta}$ of 
  $\mathcal{A}_{0}(\cdot,\cdot)$ is
  symmetric. Namely, for $\theta=-1,\, 0,\, 1$, we have
\begin{equation*}
\calA_0(z,\psi)=\calA_{0}(\psi,z) \qquad \forall\,  z ,\psi \in
  \calZ_{\beta}\;.
\end{equation*}
\end{lemma}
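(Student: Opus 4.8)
The plan is to use the weighted-residual formulation \eqref{ip:res0} of $\calA_0(\cdot,\cdot)$ and check that the three terms appearing there reduce to manifestly symmetric expressions when both arguments lie in $\calZ_{\beta}$. Write, for $z,\psi\in\calZ_{\beta}$,
\begin{equation*}
\calA_0(z,\psi)=(-\nabla\cdot(\kappa\nabla z),\psi)_{\Th}
+\langle\jump{\kappa\nabla z},\av{\psi}_{1-\beta_e}\rangle_{\Eho}
+\langle\jump{z},\calP^0_e(\mathcalB_1(\psi))\rangle_{\Eh}.
\end{equation*}
First I would dispose of the first term: since $z$ is piecewise $\mathbb{P}^1$ and $\kappa$ is piecewise constant on $\Th$, we have $-\nabla\cdot(\kappa\nabla z)=0$ on each $T$, so this term vanishes identically (for any argument), and in particular it is trivially symmetric.

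Next I would treat the volume/consistency term $\langle\jump{\kappa\nabla z},\av{\psi}_{1-\beta_e}\rangle_{\Eho}$. The key observation is that on each interior face $e$, $\kappa\nabla z$ restricted to $T^{\pm}$ is a constant vector, so $\jump{\kappa\nabla z}$ is constant on $e$; hence $\langle\jump{\kappa\nabla z},\av{\psi}_{1-\beta_e}\rangle_e=\jump{\kappa\nabla z}\cdot\bigl(\tfrac1{|e|}\int_e\av{\psi}_{1-\beta_e}\bigr)|e|$, which by the definition \eqref{defZ} of $\calZ_{\beta}$ equals zero because $\calP^0_e(\av{\psi}_{1-\beta_e})=0$. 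So this term also vanishes for any $z\in V_h^{DG}$ and $\psi\in\calZ_{\beta}$; combined with the analogous vanishing when the roles are swapped (now using $z\in\calZ_{\beta}$), both the cross-expression $\langle\jump{\kappa\nabla z},\av{\psi}_{1-\beta_e}\rangle$ and $\langle\jump{\kappa\nabla\psi},\av{z}_{1-\beta_e}\rangle$ are zero, so they agree.

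It remains to show that $\langle\jump{z},\calP^0_e(\mathcalB_1(\psi))\rangle_{\Eh}$ is symmetric in $z,\psi$. Using the definition \eqref{defB1}, $\mathcalB_1(\psi)=\theta\av{\kappa\nabla\psi}_{\beta_e}+\alpha h_e^{-1}\kappa_e\jump{\psi}$. The penalty part contributes $\alpha h_e^{-1}\kappa_e\langle\jump{z},\calP^0_e(\jump{\psi})\rangle_e$; since $\calP^0_e$ is an $L^2$-orthogonal projection, this equals $\alpha h_e^{-1}\kappa_e\langle\calP^0_e(\jump{z}),\calP^0_e(\jump{\psi})\rangle_e$, which is visibly symmetric. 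The remaining piece is $\theta\langle\jump{z},\calP^0_e(\av{\kappa\nabla\psi}_{\beta_e})\rangle_{\Eh}$. Here I would use (as in the computation preceding \eqref{cota:mix}) that $\av{\kappa\nabla\psi}_{\beta_e}=\kappa_e\av{\nabla\psi}$ on interior faces; but the cleanest route is to note that $\av{\kappa\nabla\psi}_{\beta_e}$ is constant on each $e$ (linear functions have constant gradients, $\kappa$ and $\beta_e$ are constant on $e$), hence $\calP^0_e(\av{\kappa\nabla\psi}_{\beta_e})=\av{\kappa\nabla\psi}_{\beta_e}$, and therefore $\langle\jump{z},\calP^0_e(\av{\kappa\nabla\psi}_{\beta_e})\rangle_e=\langle\calP^0_e(\jump{z}),\av{\kappa\nabla\psi}_{\beta_e}\rangle_e$. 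For $z\in\calZ_{\beta}$ one has $\calP^0_e(\av{z}_{1-\beta_e})=0$; the task is then to relate $\calP^0_e(\jump{z})$ on the face to this vanishing mean. Concretely, on $e=\partial T^+\cap\partial T^-$, using the basis representation from Proposition~\ref{teo0} ($z=\sum z_e\psi^z_e$ with $(\jump{z}\n^+)(m_e)=z_e$), $\calP^0_e(\jump{z})=z_e\n^+$ while $\av{\kappa\nabla\psi}_{\beta_e}$ can be computed on the two-element patch; carrying out this local computation and exploiting $\beta^+\kappa^+=\beta^-\kappa^-=\kappa_e/?$-type identities from \eqref{def:beta}–\eqref{defKE} should make the bilinear contribution symmetric in the coefficients $\{z_e\},\{\psi_e\}$. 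I expect \emph{this last bilinear term} (when $\theta\ne0$, i.e.\ SIPG and NIPG) to be the only real obstacle: one must verify that the matrix $\langle\jump{\psi^z_{e'}},\calP^0_e(\av{\kappa\nabla\psi^z_e}_{\beta_e})\rangle$ is symmetric, which amounts to a short computation on each two-simplex patch showing the coefficient-weighted coupling between $\psi^z_e$ and $\psi^z_{e'}$ does not distinguish order. The first-term and consistency-term arguments are immediate; the penalty term is symmetric by the projection being orthogonal; so the proof hinges on the patchwise identity for the $\theta$-term.
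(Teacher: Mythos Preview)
Your reduction is correct up to the last step: the volume term vanishes because $z$ is piecewise linear and $\kappa$ piecewise constant; the consistency term $\langle\jump{\kappa\nabla z},\av{\psi}_{1-\beta_e}\rangle_{\Eho}$ vanishes since $\jump{\kappa\nabla z}$ is facewise constant and $\calP^0_e(\av{\psi}_{1-\beta_e})=0$; and the penalty term is symmetric by self-adjointness of $\calP^0_e$. But the remaining $\theta$-term is exactly where the content of the lemma lies, and there you stop short: you note that one ``must verify'' symmetry of the matrix $\langle\jump{\psi^z_{e'}},\av{\kappa\nabla\psi^z_e}_{\beta_e}\rangle$ via a patchwise calculation, and assert it ``should'' follow from identities among $\beta^\pm,\kappa^\pm,\kappa_e$, without actually doing it. That is the gap.

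The paper closes it without any local computation. Integrate by parts element-by-element: since $-\nabla\cdot(\kappa\nabla\psi)=0$ on each $T$,
\[
0=(\kappa\nabla\psi,\nabla z)_{\Th}-\langle\av{\kappa\nabla\psi}_{\beta_e},\jump{z}\rangle_{\Eh}-\langle\jump{\kappa\nabla\psi},\av{z}_{1-\beta_e}\rangle_{\Eho},
\]
and the last term vanishes for $z\in\calZ_\beta$ (same reason you used for the consistency term). Hence
\[
\langle\av{\kappa\nabla\psi}_{\beta_e},\jump{z}\rangle_{\Eh}=(\kappa\nabla\psi,\nabla z)_{\Th},
\]
which is manifestly symmetric in $(z,\psi)$. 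This single identity replaces your proposed patchwise verification and finishes the proof immediately: $\calA_0(z,\psi)=\theta(\kappa\nabla\psi,\nabla z)_{\Th}+\langle\alpha h_e^{-1}\kappa_e\calP^0_e(\jump{z}),\calP^0_e(\jump{\psi})\rangle_{\Eh}$, both terms symmetric.
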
 
\begin{proof}
  If $\theta=-1$ there is nothing to prove, since in this case both
  bilinear forms are symmetric. Hence we only consider the cases
  $\theta=0$ or $\theta=-1$. Integrating by parts and using the fact that
  $z\in\calZ_{\beta}$ and $\psi \in \calZ_{\beta}$ are linear on each
  element $\K$ shows that
\begin{equation*}
0=(-\nabla \cdot(\kappa\nabla \psi),\nabla z)_{\Th}=(\kappa\nabla \psi,\nabla z)_{\Th}-\langle \av{\kappa \nabla \psi}_{\beta_e}, \jump{z}\rangle_{\Eh}-\langle \jump{\kappa\nabla  \psi}, \av{z}_{1-\beta_{e}}\rangle_{\Eho}\;.
\end{equation*}
Hence, from the definition \eqref{defZ} of the $\calZ_{\beta}$ space, it follows that
\begin{equation}\label{perlinear}
(\kappa\nabla \psi,\nabla z)_{\Th}=
\langle \av{\kappa \nabla \psi}_{\beta_e}, \jump{z}\rangle_{\Eh}
=\langle \av{\kappa \nabla z}_{\beta_e}, \jump{\psi}\rangle_{\Eh}, \quad \forall\, z, \psi \in \calZ_{\beta} .
\end{equation}
Substituting the above identity in the definition of the bilinear form
\eqref{ipA0} then leads to
\begin{align*}
\calA_0(z,\psi)&= \theta\langle \jump{z},\av{\kappa\nabla \psi}_{\beta_e}\rangle_{\Eh}  +\langle \calP^{0}_{e}(\jump{z}),\kappa_{e} \jump{\psi} \rangle_{\Eh} \qquad \qquad&&\\
&=\theta (\kappa\nabla \psi,\nabla z)_{\Th} +\langle \calP^{0}_{e}(\jump{\psi}), \kappa_{e}\jump{z}\rangle_{\Eh}=\calA_0(\psi,z). &&
\end{align*}
This shows the symmetry of $\calA_0(\cdot,\cdot)$ on
$\calZ_{\beta}$, regardless the value of $\theta$.
\end{proof}

We now study the conditioning of the bilinear form
 $\calA_0(\cdot,\cdot)$ on $\calZ_{\beta}$.
For all $z\in \calZ_{\beta}$, and for all
  $\phi\in \calZ_{\beta}$ with
\begin{equation*}
z=\sum_{e\in \Eh}z_e\psi_{e}^{z} \,\in\,
\calZ_{\beta},\quad\mbox{and}\quad
\phi=\sum_{e\in \Eh}\phi_e\psi_{e}^{z} \,\in\, \calZ_{\beta}.
\end{equation*}
we introduce a weighted scalar product
$(\cdot,\cdot)_{\ast}:\calZ_{\beta}\times
\calZ_{\beta}\mapsto\Reals{}$ and the corresponding norm $\|\cdot\|_{
  \ast}$, defined as follows
\begin{equation}\label{weighed-l2}
(z,\phi)_{ \ast} := \sum_{e\in \Eh}
\frac{|e|}{h_e}\kappa_{e} 
z_e\phi_e\;, \quad \|z\|^2_{ \ast} := (z,z)_{\ast}.
\end{equation}
Observe that the matrix representation  (in the basis given in~\eqref{basis-z}) of the above weighted scalar product is in fact a diagonal
matrix.
The next result shows that the restriction of
$\calA_0(\cdot,\cdot)$ to $\calZ_{\beta}$ is spectrally
equivalent to the weighted scalar product $(\cdot,\cdot)_{\ast}$ defined in \eqref{weighed-l2} and
therefore its matrix representation $\mathbb{A}_0^{zz}$ is spectrally equivalent to a
diagonal matrix.

\begin{lemma}\label{le:diag0}
Let $\calZ_{\beta}$ be the space defined in \eqref{defZ}. Then, the following estimates hold
\begin{equation}
\label{diag0:0}
\|z\|_{ \ast}^{2}\lesssim \calA_0(z,z) \lesssim  \|z\|_{ \ast}^{2} \qquad \forall\, z\in
\calZ_{\beta}\;.
\end{equation}

\end{lemma}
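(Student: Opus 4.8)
The plan is to obtain both bounds in \eqref{diag0:0} as consequences of the continuity \eqref{eq:cont} and coercivity \eqref{eq:coer} of $\calA_0(\cdot,\cdot)$ in the norm $\triplenorm{\cdot}_{DG0}$, together with one genuinely new ingredient: a $\kappa$-robust inverse estimate on $\calZ_\beta$. First I would observe that for $z=\sum_{e\in\Eh}z_e\psi_e^z\in\calZ_\beta$ the jump contribution to the energy norm is exactly the weighted $\ell^2$ norm \eqref{weighed-l2}. Indeed, on each $e$ the function $\jump{z}\cdot\n^+$ is affine and, by the identity established just before this lemma, equals $z_e$ at the midpoint $m_e$; hence $\calP_e^0(\jump z)=z_e\,\n^+$ and $\|\calP_e^0(\jump z)\|_{0,e}^2=|e|z_e^2$, so that by \eqref{def-normN}
\begin{equation*}
\triplenorm{z}_{DG0}^2=\sum_{T\in\Th}\kappa_T\|\nabla z\|_{0,T}^2+\|z\|_\ast^2\,,\qquad\forall\,z\in\calZ_\beta\,.
\end{equation*}
With this identity, the lower bound $\|z\|_\ast^2\lesssim\calA_0(z,z)$ is immediate from \eqref{eq:coer} since $\triplenorm{z}_{DG0}^2\ge\|z\|_\ast^2$, and, using \eqref{eq:cont}, the upper bound reduces to proving the single estimate
\begin{equation*}
\sum_{T\in\Th}\kappa_T\|\nabla z\|_{0,T}^2\lesssim\|z\|_\ast^2\,,\qquad\forall\,z\in\calZ_\beta\,.
\end{equation*}

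This inverse-type inequality is the heart of the lemma and the only place where real work is needed. I would argue element by element. Fix $T$; by \eqref{basis-z}--\eqref{basis-zb} each basis function restricts to $T$ as $\psi_e^z|_T=\pm\,\beta_e^{(T)}\varphi_{e,T}$, where $\varphi_{e,T}$ is the canonical Crouzeix--Raviart function on $T$ and $\beta_e^{(T)}\in\{\beta_e,\,1-\beta_e\}$ for interior faces while $\beta_e^{(T)}=1$ for boundary faces, so in all cases $|\beta_e^{(T)}|\le1$. Writing $z|_T=\sum_{e\subset T}(\pm z_e\beta_e^{(T)})\varphi_{e,T}$, using that $T$ has $d+1$ faces together with Cauchy--Schwarz, and invoking the standard scaling $\|\nabla\varphi_{e,T}\|_{0,T}^2\simeq h_T^{d-2}\simeq|e|/h_e$ valid on shape-regular simplices, one gets
\begin{equation*}
\kappa_T\|\nabla z\|_{0,T}^2\lesssim\sum_{e\subset T}\kappa_T\,(\beta_e^{(T)})^2\,\frac{|e|}{h_e}\,z_e^2\,.
\end{equation*}
Summing over $T$ and regrouping by faces, the weight multiplying $\frac{|e|}{h_e}z_e^2$ becomes $\sum_{T:\,e\subset T}\kappa_T(\beta_e^{(T)})^2$. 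For an interior face $e=\partial T^+\cap\partial T^-$ this equals $\kappa^+\beta_e^2+\kappa^-(1-\beta_e)^2$, which by the definition \eqref{def:beta} of $\beta_e$ reduces to $\tfrac{\kappa^+\kappa^-}{\kappa^++\kappa^-}=\tfrac12\kappa_e$; for a boundary face it reduces to $\kappa_T$, which equals $\kappa_e$. In either case it is $\lesssim\kappa_e$, and therefore $\sum_T\kappa_T\|\nabla z\|_{0,T}^2\lesssim\sum_{e\in\Eh}\frac{|e|}{h_e}\kappa_e z_e^2=\|z\|_\ast^2$, which finishes the proof together with the previous paragraph.

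I expect the delicate point to be purely the bookkeeping of the coefficient-dependent weights: the cancellation $\kappa^+\beta_e^2+\kappa^-(1-\beta_e)^2=\tfrac12\kappa_e$ is exactly the algebraic fact (parallel to the identity $\av{\kappa\nabla v}_{\beta_e}=\kappa_e\av{\nabla v}$ used for \eqref{eq:cont}--\eqref{eq:coer}) that renders the bound uniform in the jumps of $\kappa$; had $\calZ_\beta$ been defined with unweighted averages, the constant in the inverse estimate would deteriorate like the coefficient ratio. Everything else --- the midpoint/projection identity, the Cauchy--Schwarz step, and the simplex scaling --- is routine.
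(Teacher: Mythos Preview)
Your proof is correct. The overall structure---reducing both bounds to the coercivity/continuity of $\calA_0$ plus the single inverse-type estimate $\sum_{T}\kappa_T\|\nabla z\|_{0,T}^2\lesssim\|z\|_\ast^2$---matches the paper, and your identification $\|\calP_e^0(\jump{z})\|_{0,e}^2=|e|z_e^2$ is exactly what the paper uses.

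Where you differ is in how you obtain the inverse estimate. The paper does not work element by element; instead it invokes the integration-by-parts identity (established in the proof of Lemma~\ref{symZZ}) $(\kappa\nabla z,\nabla z)_{\Th}=\langle\kappa_e\av{\nabla z},\calP_e^0(\jump{z})\rangle_{\Eh}$, valid on $\calZ_\beta$, and then applies Cauchy--Schwarz together with trace and inverse inequalities to cancel one factor of the gradient term. Your argument is a direct local computation with the explicit basis \eqref{basis-z}--\eqref{basis-zb}, culminating in the algebraic identity $\kappa^{+}\beta_e^2+\kappa^{-}(1-\beta_e)^2=\tfrac12\kappa_e$. Both routes are short and $\kappa$-robust: the paper's version reuses a structural fact already on hand and makes the role of the space $\calZ_\beta$ implicit in the identity \eqref{perlinear}, while yours is more self-contained and makes fully transparent \emph{why} the particular weight $\beta_e$ in the definition of $\calZ_\beta$ is the right one---precisely because it turns the elementwise weight $\kappa_T(\beta_e^{(T)})^2$ into the harmonic mean $\kappa_e$ upon summation over the two neighbours.
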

\begin{proof}
  Let us fix $z\in \calZ_{\beta}$, $z=\sum_{e\in \Eh}z_e\psi_{e}^{z}$.
  From~the definition of $\calP^{0}_e(\jump{z})$, it is
  immediate to see that
\[
\|\calP^{0}_e(\jump{z})\|_{0,e}^{2}=|e|z_e^{2}.
\]
Thus, we have that
\begin{equation}\label{a-provar}
\sum_{e\in\Eh} \kappa_{e}
h_e^{-1}\|\calP^{0}_e(\jump{z})\|_{0,e}^{2} = \sum_{e\in\Eh}
\kappa_{e}\frac{|e|}{h_e}z^2_e = \| z\|_{*}. 
\end{equation} 
To show the upper bound in \eqref{diag0:0}, we notice that
 \eqref{perlinear} together with \eqref{cota-a} and the standard trace and inverse  inequalities gives
 \begin{align*}
\sum_{\K\in \Th} \kappa_{\K}\|\nabla z\|^{2}_{0,\K} &=(\kappa\nabla z,\nabla z)_{\Th}= \langle \av{\kappa \nabla z}_{\beta_e}, \jump{z}\rangle_{\Eh}=\langle \kappa_{e}\av{ \nabla z}, \calP^{0}_e(\jump{z})\rangle_{\Eh} &&\\
&\lesssim \left(\sum_{\K\in\Th} \kappa_{\K}\|\nabla z\|^{2}_{0,\K}\right)^{1/2}\left(\sum_{e\in\Eh} \kappa_{e}\|h_e^{-1/2}\calP^{0}_e(\jump{z})\|_{0,e}^{2}\right)^{1/2}\;, &&
\end{align*} 
and therefore by \eqref{a-provar}, 
\begin{equation}\label{cota0}
\sum_{\K\in \Th} \kappa_{\K}\|\nabla z\|^{2}_{0,\K} \lesssim \sum_{e\in\Eh}
\kappa_{e}\|h_e^{-1/2}\calP^{0}_e(\jump{z})\|_{0,e}^{2}=\|z\|_{ \ast}^2.
\end{equation}
Since $z\in \calZ_{\beta}$ was arbitrary, 
we have that $\calA_0(z,z)\lesssim \|z\|_{ \ast}^2$  for all $z\in
\calZ_{\beta}$.
This proves the upper bound in~\eqref{diag0:0}.

To prove the lower bound, we use the coercivity estimate  \eqref{eq:coer} for the
bilinear form $\calA_0(\cdot,\cdot)$ in the energy norm
$\triplenorm{\cdot}_{DG0}^{2}$ [see~\eqref{def-normN}]. For all $z\in \calZ_{\beta}$ we have
\begin{eqnarray*}
\calA_0(z,z)& \gtrsim& \triplenorm{z}_{DG0}^{2}=
\sum_{\K\in \Th} \kappa_{\K}\|\nabla z\|^{2}_{0,\K}
+\sum_{e\in\Eh}
\kappa_{e}\|h_e^{-1/2}\calP^{0}_e(\jump{z})\|_{0,e}^{2}\\
& \gtrsim&  \sum_{e\in\Eh}
\kappa_{e}\|h_e^{-1/2}\calP^{0}_e(\jump{z})\|_{0,e}^{2}=\|z\|^2_{ \ast}\;,
\end{eqnarray*}
which is the desired bound and gives~\eqref{diag0:0}.  \end{proof}

Last result guarantees that the linear systems on
$\mathcal{Z}_{\beta}$ can be efficiently solved by preconditioned CG
(PCG) with a diagonal preconditioner. 
As a corollary of the result in Lemma~\ref{le:diag0},
the number of PCG iterations will be independent of both the 
mesh size and the variations in the PDE coefficient.

We end this section by showing that in the particular case of the IIPG($\beta$)-0 
method, the matrix representation of $\calA_0(\cdot,\cdot)$ restricted to $\calZ_{\beta}$ is
in fact a diagonal matrix. (See the rightmost figure in Fig. \ref{fig0a}).
\begin{lemma}\label{lem:ZZ}
  Let $\calA_0(\cdot,\cdot)$ be the bilinear form of the non-symmetric
  IIPG$(\beta)$-0 method \eqref{ipA0} with $\theta=0$. Let
  $\{\psi^{z}_{e}\}_{e\in \Eh}$ be the basis for the space
  $\calZ_{\beta}$ as defined in \eqref{basis-z}. Let
  $\mathbb{A}_0^{zz}$ be the matrix representation in this basis of
  the restriction to the subspace $\calZ_{\beta}$ of the operator
  associated to $\calA_0(\cdot,\cdot)$. Then, $\mathbb{A}_{0}^{zz}$ is
  diagonal.\end{lemma}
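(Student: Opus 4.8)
The plan is to compute the entries $[\mathbb{A}_0^{zz}]_{e,e'} = \calA_0(\psi^z_{e'}, \psi^z_e)$ for $e, e' \in \Eh$ and show they vanish whenever $e \neq e'$. Since by Lemma~\ref{symZZ} the restriction of $\calA_0(\cdot,\cdot)$ to $\calZ_\beta$ is symmetric (even for $\theta = 0$), it suffices to argue with one fixed ordering of the arguments; I would use the identity \eqref{perlinear}, valid for all $z, \psi \in \calZ_\beta$, which was established inside the proof of Lemma~\ref{symZZ}. Setting $\theta = 0$ in the last displayed computation of that proof gives
\begin{equation*}
\calA_0(z,\psi) = \langle \calP^0_e(\jump{z}), \kappa_e \jump{\psi}\rangle_{\Eh} \qquad \forall\, z,\psi \in \calZ_\beta,
\end{equation*}
so the whole matrix reduces to the "mass-type" term involving only the projected jumps.

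Next I would exploit the structure of the basis. For $z = \psi^z_{e'}$ one has, by the jump-evaluation identity recorded after Remark~\ref{rem:ZB}, that $\calP^0_e(\jump{\psi^z_{e'}})$ is supported only on the face $e'$ (indeed $(\jump{\psi^z_{e'}}\n^+)(m_{e''}) = \delta_{e',e''}$, and each $\psi^z_{e'}$ is supported on the at most two elements sharing $e'$, so its jump across any face other than $e'$ and the faces of $T^{+}_{e'} \cup T^{-}_{e'}$ is zero — but more to the point $\calP^0$ of the jump is a piecewise constant concentrated on $e'$). Therefore
\begin{equation*}
[\mathbb{A}_0^{zz}]_{e,e'} = \langle \calP^0_e(\jump{\psi^z_{e'}}), \kappa_e \jump{\psi^z_e}\rangle_{\Eh} = \int_{e'} \kappa_{e'}\, \calP^0_{e'}(\jump{\psi^z_{e'}}) \cdot \jump{\psi^z_e}\, ds,
\end{equation*}
and this is nonzero only if the face $e$ also contributes a jump on $e'$, i.e. only if $e = e'$. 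Writing out the diagonal entry using $\|\calP^0_e(\jump{\psi^z_e})\|_{0,e}^2 = |e|$ (from the proof of Lemma~\ref{le:diag0}) gives $[\mathbb{A}_0^{zz}]_{e,e} = \kappa_e |e|/h_e$, which is precisely the diagonal of the weighted inner product $(\cdot,\cdot)_\ast$ from \eqref{weighed-l2}; in fact Lemma~\ref{le:diag0} together with $\theta = 0$ already forces this.

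The main point to be careful about — and the only genuine obstacle — is verifying that the gradient term $(\kappa\nabla\psi^z_{e'}, \nabla\psi^z_e)_{\Th}$ and the two average-flux terms in \eqref{ipA0} really do cancel out of the diagonalized expression rather than reappearing off-diagonally. This is exactly what \eqref{perlinear} buys us: on $\calZ_\beta$ the volume term $(\kappa\nabla\psi^z_{e'},\nabla\psi^z_e)_{\Th}$ equals $\langle \av{\kappa\nabla\psi^z_{e'}}_{\beta_e}, \jump{\psi^z_e}\rangle_{\Eh}$, which cancels against the corresponding consistency term in \eqref{ipA0}, and when $\theta = 0$ the adjoint-consistency term is absent altogether; so no coupling between distinct faces survives. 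Thus $\mathbb{A}_0^{zz}$ is diagonal with entries $\kappa_e|e|/h_e$, completing the proof.
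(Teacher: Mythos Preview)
Your argument is correct and follows essentially the same path as the paper: both use the identity \eqref{perlinear} (with $\theta=0$) to reduce $\calA_0|_{\calZ_\beta}$ to the penalty term alone, and then observe that $\calP^0_{e''}(\jump{\psi^z_{e}})$ vanishes for $e''\neq e$. One small imprecision: when you say ``nonzero only if the face $e$ also contributes a jump on $e'$'', note that $\psi^z_e$ \emph{does} have a nonzero jump on the other faces of its support---what vanishes is the \emph{mean} of that jump (equivalently, $\calP^0_{e'}(\jump{\psi^z_e})=0$ for $e'\neq e$), which is what the integral against the constant $\calP^0_{e'}(\jump{\psi^z_{e'}})$ actually sees; also, the penalty factor $\alpha h_e^{-1}$ should appear in your diagonal entry.
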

\begin{proof}
  Note that from the definition \eqref{ipA0} of the method
  ($\theta=0$) together with \eqref{perlinear} we have
\begin{align}\label{nume00}
\calA_0(z,\psi)&=(\kappa\nabla z,\nabla \psi)_{\Th} -\langle \av{ \nabla z}_{\beta_{e}}, \jump{\psi}\rangle_{\Eh}+\langle \alpha h_e^{-1}  \kappa_{e}\calP_e^{0}(\jump{ z}), \calP^{0}_{e}(\jump{\psi})\rangle_{\Eh}  &&\nonumber \\
&=\langle  \alpha h_e^{-1}  \kappa_{e}\calP_e^{0}(\jump{ z}), \calP^{0}_{e}(\jump{\psi})\rangle_{\Eh}, \quad \forall\, z, \psi \in \calZ_{\beta}\; . &&
\end{align}
Let $\{\psi^{z}_{e}\}_{e\in \calE_h}$ be the basis functions \eqref{basis-z}. To prove that $\mathbb{A}_0^{zz}$ is diagonal it is enough to show that for the basis functions  \eqref{basis-z}, the following relation holds:
\begin{equation}\label{numerito3}
\mathcal{A}_{0}(\psi^{z}_{e},\psi^{z}_{e'})=c_{e}\delta_{e,e'}, \qquad c_{e}\ne 0, \qquad \forall\, e\in \Eh\;,
\end{equation}
where $\delta_{e,e'}$ is the delta function associated with the edge/face $e$. We now show \eqref{numerito3}.  Observe that the supports of $\psi^{z}_{e}$ and $\psi^{z}_{e'}$  have empty intersection unless $e,e' \subset
\K$ for some $\K\in \Th$.    Let $\K \cap
\partial\Omega=\emptyset$ be an interior element, then from \eqref{nume00} and the mid-point integration rule, we have
\begin{align*}
  \mathcal{A}_{0}(\psi^{z}_{e},\psi^{z}_{e'})&= \alpha h_e^{-1} \int_e \kappa_{e}\calP^{0}_{e}(\jump{\psi^{z}_{e}})\calP^{0}_{e}(\jump{\psi^{z}_{e'}}) ds=  \alpha h_e^{-1} \kappa_{e} [2\psi^{z}_{e}(m_{e})] [2\psi^{z}_{e'}(m_{e})] &&\\
  &= 4\alpha h_e^{-1} \kappa_{e}  \delta_{e,e'}, 
\qquad e,e'\subset \partial\K, \quad e,e' \in \Eho\;,
\end{align*}
which shows \eqref{numerito3} for interior edges with $c_e=4\alpha
h_e^{-1} \kappa_{e}$. For boundary edges/faces the considerations are 
essentially the same and therefore omitted.  The proof is complete
since the relation~\eqref{numerito3} readily implies that the off-diagonal
terms of $\mathbb{A}_0^{zz}$ are zero.
\end{proof}

\section{Robust Preconditioner on $V_{h}^{CR}$}
\label{sec:crsolver}
In this section, we develop efficient and robust (additive) two-level and multilevel preconditioners for the solution  of the IP($\beta$)-0 methods in the CR space (cf. Step~2 of algorithm \ref{algo0}).  We first review a few preliminaries and tools that will be needed for the convergence analysis. We then define the two-level preconditioner and provide the convergence analysis. The last part of the section contains the construction and convergence analysis of the multilevel preconditioner.

From the definition \eqref{defCR} of the $V_h^{CR}$ space, it follows that the restriction of $\calA_0(\cdot,\cdot)$ to $V_{h}^{CR}$ reduces to the classical $\mathbb{P}^{1}$-nonconforming finite
element discretization of \eqref{eqn:model}:
\begin{eqnarray}\label{prob:cr}
  \mbox{Find } u\in V^{CR}_{h} :& \calA_0(u, w) =(\kappa\nabla u, \nabla w)_{\Th}=(f, w), & \forall\, w\in V^{CR}_{h}. 
 \end{eqnarray}
We denote $A_{0}^{CR}$ as the operator induced by \eqref{prob:cr}.
For the analysis in this section, we will need the following semi-norms
and norms for any $v\in V_h^{CR}$:
\begin{align}
|v|^{2}_{1,h,\kappa}&:=\sum_{\K\in \Th} \kappa_{\K} \|\nabla v\|_{0,\K}^{2}\;,  \qquad |v|^{2}_{1,h,\Omega_i}:= \sum_{\K\in \Th\;,\, \K\subseteq  \O_{i} } \|\nabla v\|_{0,\K}^{2}\;, \label{def-norms10}&&\\
\|v\|^{2}_{0,\kappa}:&=\sum_{i=1}^{M} \kappa\big|_{\O_i} \|v\|_{0,\O_i}^{2}\;, 
\qquad\|v\|^{2}_{1,h,\kappa}:=\|v\|^{2}_{0,\kappa}+ |v|^{2}_{1,h,\kappa}\;.\label{def-norms00}&&
\end{align}
Since \eqref{prob:cr} is a symmetric and coercive problem, from the classical
theory of PCG we know that the convergence rates of the iterative
method for $A_0^{CR}$ with preconditioner, say $B$,  are fully
determined, in the worst case scenario, by the condition number of the
preconditioned system: $\mathcal{K}(BA_0^{CR})$.  However, if the spectrum of $BA_0^{CR}$,
$\sigma(BA_0^{CR})$ happens to be divided in two sets:
$\sigma(BA_0^{CR})=\sigma_0(BA_0^{CR})\cup \sigma_1(BA_0^{CR})$, where
$\sigma_0(BA_0^{CR})=\{\lambda_1,\ldots ,\lambda_{m}\}$ contains
all of the very small (often referred to as ``bad'') eigenvalues,
and the remaining eigenvalues (bounded above and below) are contained in
$\sigma_1(BA_0^{CR})=\{\lambda_{m+1}, \ldots , \lambda_{n_{CR}}\}$,
that is, $\lambda_j \in [a,b]$ for $j=m+1,\ldots, n_{CR}$, with
$n_{CR}=\mbox{dim}(V_h^{CR})=n_{E}-n_{BE}$, i.e. the number of interior edges, then the error
at the $k$-th iteration of the PCG algorithm is bounded by 
(see e.g.~\cite{Axelsson.O1994,Hackbusch.W1994,Axelsson.O2003}):
\begin{equation}
\label{eqn:CG}
\frac{\|u-u_k\|_{1,h,\kappa}} {\|u-u_0\|_{1,h,\kappa}}\le 2(\mathcal{K}(BA_0^{CR})-1)^{m}
\left(\frac{\sqrt{b/a}-1}{\sqrt{b/a}+1}\right)^{k-{m}}\;.
\end{equation}
The above estimate indicates that if $m$ is not large (there are 
only a few very small eigenvalues) then the \emph{asymptotic} convergence rate of the resulting PCG
method will be dominated by the factor
$\frac{\sqrt{b/a}-1}{\sqrt{b/a}+1}$, i.e. by $\sqrt{b/a}$ where
$b=\lambda_{N}(BA_0^{CR})$ and $a=\lambda_{m+1}(BA_0^{CR})$.  The
quantity $(b/a)$ which determines the asymptotic convergence rate is
often called \emph{effective condition number}.
This is precisely the situation in the
case of problems with large jumps in the coefficient $\kappa$. In fact,  for a conforming FE approximation to \eqref{eqn:model}  it has 
been observed in \cite{GrahamI_HaggerM-1999aa, XuJZhuY-2008aa} that
the spectrum $\sigma(BA_0^{CR})$ might contain a few very small
eigenvalues, which result in an extremely large value of
$\mathcal{K}(BA_0^{CR})$. Nevertheless,  they seem to have very little influence on the efficiency and overall performance of the PCG method. Therefore, it is natural to study the {\it asymptotic} convergence in this case, which as mentioned above is determined by the \emph{effective condition
  number}:
 \begin{definition}
\label{def:effcond}
Let $V$ be a real $N$-dimensional Hilbert space, and
$\mathfrak{A}:V\to V$ be a symmetric positive definite linear
operator, with eigenvalues $0 < \lambda_{1} \le \cdots \le
\lambda_{N}$. The \emph{$m$-th effective condition number} of
$\mathfrak{A}$ is defined by
    \begin{equation*}
	 \mathcal{K}_{m}(\mathfrak{A}) := \frac{\lambda_{N}(\mathfrak{A})}{\lambda_{m+1}(\mathfrak{A})}.
	 \end{equation*}
\end{definition}
Below, we will introduce the two-level and multilevel preconditioners, and study in detail the spectrum of the preconditioned systems. In particular, we gave estimates on both condition numbers and the effective condition numbers, which indicates the \emph{pre-asymptotic} and {\it asymptotic} convergence rates in \eqref{eqn:CG} of the PCG algorithms. 
\subsection{Two-level preconditioner for $\calA_0(\cdot,\cdot)$ on
 $V_{h}^{CR}$}\label{sec:2level}
In this subsection, we construct a two-level additive preconditioner,
which consists of a standard pointwise smoother (Jacobi, or
Gauss-Seidel) on the nonconforming space $V_{h}^{CR}$ plus a coarse solver on a (possibly coarser) conforming space
$\Vc:=\{v\in H_{0}^{1}(\Omega): v|_{T} \in \mathbb{P}_{1}(T),\;\;
\forall T\in \mathcal{T}_{\h}\}$. Here, $\mathcal{T}_{\h}$ refers to a possibly coarser partition such that  $\mathcal{T}_{\h} \subseteq \mathcal{T}_{h}$; that is for $\h = h$, $\mathcal{T}_{\h}$ is the same as $\mathcal{T}_{h}$, while for $\h =H>h$ the partitions are nested and $\mathcal{T}_{h}$ could be regarded as a refinement of $\mathcal{T}_{\h}$.  Observe that $\Vc$ is a proper
subspace of $V_{h}^{CR}$. To define the two-level preconditioner, we
consider the following (overlapping) space decomposition of $\Vcr$:
  \begin{equation}
	\label{eqn:space-decomp}
	\Vcr=\Vcr + \Vc.
\end{equation}

On $\Vc$ we consider the standard conforming
$\mathbb{P}^{1}$-approximation to \eqref{eqn:model}: Find $\chi \in
\Vc$ such that
\begin{equation}\label{a:conf}
  \calA_0(\chi,\eta)
=a(\chi,\eta)=\int_{\O} \kappa \nabla \chi \cdot \nabla \eta dx = (f,\eta), \quad \forall\, \eta \in \Vc\;.
\end{equation}
The bilinear form in \eqref{a:conf} defines a natural ``energy'' inner product, and induces the following weighted energy norm:
\begin{equation}\label{def-norms0}
  |\chi|^{2}_{1,\kappa, D} :=  \int_{D} \kappa|\nabla \chi |^{2} dx \;,\quad \forall\, \chi \in H^{1}(D),\quad  D \subset \Omega.
\end{equation}
For simplicity, we write  $|\chi|_{1,\kappa} =  |\chi|_{1,\kappa, \O}$ and denote by $A^{C}$ the operator associated to \eqref{a:conf}. We define the  two level preconditioner as:
\begin{equation}\label{eq:2Loperator}
B: \Vcr \mapsto \Vcr, \quad \quad
B:=R^{-1} + (A^{C})^{-1} Q^C,
\end{equation}
where $R^{-1}$ is the operator corresponding to a Jacobi or symmetric Gauss-Seidel smoother on $\Vcr$, and $Q^{C} : \Vcr \mapsto \Vc$ is the standard $L^{2}$-projection.  We refer to \cite{ahzz:tech} for further details on the matrix representation of the above preconditioner.

Next Theorem is the main result of this section, which establishes the convergence for the two-level preconditioner \eqref{eq:2Loperator}.

\begin{theorem}\label{teo1}
  Let $B$ be the multilevel preconditioner defined in \eqref{eq:2Loperator}, and $\varpi=\h/h$ be the ratio of the mesh sizes of $\cT_{\h}$ and $\cT_{h}$.   Then, the condition number $\mathcal{K}(BA_{0}^{CR})$ satisfies:
  	\begin{equation}\label{bound:AAA}
		\mathcal{K}(BA_{0}^{CR}) \le C_{0} \mathcal{J}(\kappa) \varpi^{2}\log(2\varpi)\;,
	\end{equation}
		where $\mathcal{J}(\kappa):= \max_{T\in \Th} \kappa_{T} /\min_{T\in \Th} \kappa_{T}$ is what we refer as the jump of the coefficient and $C_{0}>0$ is a constant independent of the coefficient $\kappa$ and the mesh size. Moreover, there exists an integer $m_{0}$ depending only on the distribution of the coefficient $\kappa$ such that the $m_{0}$-th effective
  condition number $\mathcal{K}_{m_{0}}(B A_0^{CR})$ satisfies:
	\begin{equation*}
	 \mathcal{K}_{m_{0}}(B A_0^{CR}) \le C_1 \varpi^{2}\log(2\varpi )\;,
	\end{equation*}
	where $C_{1}>0$ is a constant independent of the coefficient and mesh size. Hence, the convergence rate of the PCG algorithm can be bounded as
	\begin{equation}
\label{eqn:2LCG-rate}
\frac{|u-u_k |_{1,h,\kappa}}{|u-u_0 |_{1,h,\kappa}}\le 2 \left(C_{0}\mathcal{J}(\kappa) \varpi^{2}\log(2\varpi ) -1 \right)^{m_0}
\left(\frac{\sqrt{C_{1}}\varpi \log^{1/2}(2\varpi )-1}{\sqrt{C_1}\varpi \log^{1/2}(2\varpi ) +1}\right)^{k-{m_0}} \;.
\end{equation}
\end{theorem}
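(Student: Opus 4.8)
The plan is to apply the abstract theory of additive Schwarz (subspace correction) methods to the overlapping decomposition $\Vcr = \Vcr + \Vc$ from \eqref{eqn:space-decomp}, with the local solver $R^{-1}$ on $\Vcr$ and the exact solver $(A^{C})^{-1}$ on $\Vc$. Recall that for such a two-level additive method the condition number $\mathcal{K}(BA_0^{CR})$ is controlled by (i) an upper bound coming from a strengthened Cauchy--Schwarz / coloring argument together with the spectral equivalence $\calA_0(v,v)\simeq (Rv,v)^{-1}$-type bounds for the smoother (this part is standard and $\kappa$-independent up to the usual constants), and (ii) a lower bound governed by the stable decomposition estimate: for every $v\in\Vcr$ there is a splitting $v = v_c + v_r$ with $v_c\in\Vc$, $v_r\in\Vcr$ such that $(Rv_r,v_r) + |v_c|_{1,\kappa}^2 \lesssim C_{\mathrm{sd}}\,|v|_{1,h,\kappa}^2$. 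The whole game is to show $C_{\mathrm{sd}} \lesssim \mathcal{J}(\kappa)\,\varpi^2\log(2\varpi)$, and correspondingly that the ``bad'' directions responsible for the extra $\mathcal{J}(\kappa)$ factor span a space of dimension $m_0$ depending only on the coefficient distribution, so that they can be absorbed into the effective condition number via \eqref{eqn:CG}.

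First I would fix the decomposition operator. Given $v\in\Vcr$, I would let $v_c = \Pi_{\h} v \in \Vc$ be a suitable (Scott--Zhang-type or averaging) quasi-interpolant from the nonconforming space into the conforming coarse space $\Vc$, and set $v_r = v - v_c$. The two ingredients then needed are: an $H^1$-stability and approximation estimate for $\Pi_{\h}$ that is \emph{weighted} by $\kappa$ but pays a factor $\varpi^2\log(2\varpi)$ — this is exactly the kind of estimate proved for conforming jump-coefficient problems in \cite{XuJZhuY-2008aa,ZhuY-2008aa}, and here one must additionally handle the nonconforming source space (so interpolation/averaging across the mass centers of faces, controlling the jump contributions $\kappa_e h_e^{-1}\|\calP^0_e\jump{v}\|^2$, which for $v\in\Vcr$ vanish in the relevant projected component); and a bound for the remainder $v_r$ against the smoother, $(Rv_r,v_r)\lesssim h^{-2}\|v_r\|_{0,\kappa}^2 \lesssim \varpi^2\log(2\varpi)\,|v|_{1,h,\kappa}^2$, using the approximation property of $\Pi_{\h}$ and an inverse inequality. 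I would carry out the weighted interpolation estimate element-patch by element-patch, using quasi-monotonicity of $\kappa$ along paths where it holds and isolating the (finitely many, coefficient-determined) subdomains where it fails — these isolated pieces are what produce $m_0$ and the $\mathcal{J}(\kappa)$ prefactor rather than a genuine loss of robustness.

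Next, for the upper bound half of $\mathcal{K}(BA_0^{CR})$ I would invoke the standard ASM upper estimate: $\calA_0(Bw,w)$-type expansions reduce to bounding the number of overlapping subspaces (here just two: the local smoother space and $\Vc$) and the spectral equivalence of $R$ with the diagonal of $\mathbb{A}^{vv}_0$, both of which are $\kappa$- and $h$-independent by shape regularity. Combining the two halves gives \eqref{bound:AAA}. Finally, for the effective condition number statement I would observe that the stable-decomposition constant degrades only along an $m_0$-dimensional subspace — spanned by (discrete) functions that are essentially constant on each ``island'' of the coefficient distribution where quasi-monotonicity fails — so the generalized eigenvalue problem $A_0^{CR} x = \lambda B^{-1} x$ has at most $m_0$ eigenvalues below a uniform threshold and the rest lie in an interval $[a,b]$ with $b/a \lesssim \varpi^2\log(2\varpi)$; plugging into \eqref{eqn:CG} yields \eqref{eqn:2LCG-rate}. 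The main obstacle I expect is the weighted quasi-interpolation estimate from the \emph{nonconforming} space $\Vcr$ into the conforming coarse space $\Vc$ with the sharp $\varpi^2\log(2\varpi)$ dependence while correctly tracking the coefficient weights across subdomain interfaces — this is precisely where a naive extension of \cite{XuJZhuY-2008aa,ZhuY-2008aa} breaks down (as flagged in the introduction), because the nonconforming functions are double-valued at faces and one must choose the interpolation so that the jump seminorm is controlled and the coefficient $\kappa$ enters through the harmonic-mean weights $\kappa_e$ rather than through $\max/\min$ on each face.
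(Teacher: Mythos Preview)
Your proposal has the same high-level architecture as the paper: additive Schwarz on the two-space decomposition \eqref{eqn:space-decomp}, a uniform upper bound from the smoother equivalence $\calA_0(v,v)\lesssim \calR(v,v)\simeq h^{-2}\|v\|_{0,\kappa}^2$ (Lemma~\ref{lm:smoothCR}), and a lower bound via a stable decomposition $v=\chi+(v-\chi)$ with $\chi$ a quasi-interpolant into $\Vc$, together with an $m_0$-dimensional exceptional subspace handled by the minimax principle and \eqref{eqn:CG}. So the skeleton is right, and you have correctly located the hard step as the weighted approximation/stability of an operator $V_h^{CR}\to\Vc$.

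Where you diverge from the paper is in the \emph{mechanism} by which $\mathcal{J}(\kappa)$ and $m_0$ enter. You propose to run the interpolation estimate patch-by-patch, invoking quasi-monotonicity of $\kappa$ along paths and isolating the patches where it fails; the $m_0$ bad directions would then be piecewise constants on those ``islands.'' The paper does \emph{not} use quasi-monotonicity at all. Instead it builds $P_h^{\h}$ subdomain-by-subdomain (Appendix~\ref{sec:projection}): first embed $V_h^{CR}(\Omega_i)\hookrightarrow S_h(\Omega_i)$ via an averaging operator $E_i$, then apply Scott--Zhang on each $\Omega_i$ with a separate Scott--Zhang on the interfaces to enforce global continuity. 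The $\log(2\varpi)$ arises from a discrete Sobolev inequality at the cross points $\partial\Gamma$ of the subdomain interfaces (equations \eqref{caspa5}--\eqref{eqn:3dg}), and the resulting bounds are in terms of the \emph{full} weighted norm $\|v\|_{1,h,\kappa}$, not the seminorm. The factor $\mathcal{J}(\kappa)$ then enters only through the global Poincar\'e--Friedrichs step $\|v\|_{0,\kappa}^2\lesssim \mathcal{J}(\kappa)|v|_{1,h,\kappa}^2$ (Corollary~\ref{cor:inter-jmp}); on the codimension-$m_0$ subspace $\widetilde{V}_h^{CR}$ of functions with zero mean on each \emph{floating} subdomain, Poincar\'e holds subdomain-wise and $\mathcal{J}(\kappa)$ disappears (Corollary~\ref{cor:P}). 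Thus $m_0$ is simply the number of floating subdomains in $\mathcal{T}_S$, independent of the \emph{values} of $\kappa$.

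Your quasi-monotonicity route is a legitimate alternative in the domain-decomposition literature, but it is genuinely different and arguably harder to make precise here: you would need to construct, for each vertex or face patch, a monotone path to the boundary through the coefficient landscape, and the resulting $m_0$ would depend on the coefficient \emph{values}, not just the partition geometry. The paper's approach sidesteps this entirely by pushing all the $\kappa$-dependence into a single global Poincar\'e step, at the price of a possibly larger (but still coefficient-distribution-determined and mesh-independent) $m_0$.
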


\begin{remark}
\label{rk:nolog}
We emphasize that for two-level preconditioners, since the ratio
$\varpi=\h/h$ is a fixed constant, the effective condition number 
$\mathcal{K}_{m_{0}} (B A_{0}^{CR})$ is bounded uniformly with respect to the coefficient variation and mesh size. Clearly, according to estimate \eqref{eqn:2LCG-rate}, the number of (pre-asymptotic) PCG iterations will depend on the constant $m_0$ (the number of floating subdomains; see~\eqref{def:I}). 
While such a bound could be a large overestimate (depending on
the coefficient distribution), it is sufficient for our purposes.
Since $m_0$ is fixed, the
asymptotic convergence rate in \eqref{eqn:2LCG-rate} is bounded
uniformly with respect to coefficient variation and mesh size. In
short, while the estimates given here might not be sharp
with regard to the pre-asymptotic PCG convergence, they are
asymptotically uniform with respect to the parameters of interest.
\end{remark}

We recall the following well known identity~\cite[Lemma 2.4]{XuJZikatanovL-2002aa}:
\begin{equation}\label{eq:additive-inverse}
(B^{-1} v , v) = \inf_{\chi\in \Vc} [\calR(v-\chi,v-\chi) + a(\chi,\chi)] \quad \forall v\in V_{h}^{CR}, 
\end{equation}
where $\calR(\cdot,\cdot)$ is the bilinear form associated with the
smoother  defined by $\calR(v,w) := (Rv,w)$  for any $w, v\in V_{h}^{CR}$. 
The proof of
Theorem~\ref{teo1} amounts to showing a smoothing property for 
$\calR(\cdot, \cdot)$ and the stability of the decomposition given in \eqref{eqn:space-decomp}.
Next Lemma establishes the former; the latter is contained in next subsection.
\begin{lemma}\label{lm:smoothCR}
  Let $\mathcal{R}(\cdot,\cdot)$ be the bilinear form associated to Jacobi, or symmetric Gauss-Seidel smoother. Then we  have the following estimates
\begin{equation}\label{eqn:smoother}
\calA_0(v,v)\lesssim \mathcal{R}(v,v)\quad\mbox{and}\quad
 \mathcal{R}(v,v) \simeq h^{-2} \|v\|^{2}_{0,\kappa}\;, \quad \forall\, v\in V^{CR}_h\;.
\end{equation}
\end{lemma}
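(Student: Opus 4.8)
The plan is to prove the two claimed estimates separately, both of which are classical facts about pointwise smoothers but must be checked to be \emph{robust} in the weighted norm. We are working on the nonconforming space $V_h^{CR}$ equipped with the bilinear form $\calA_0(v,w)=(\kappa\nabla_h v,\nabla_h w)_{\Th}$, whose matrix representation is the stiffness matrix $\mathbb{A}_0^{vv}$ in the Crouzeix--Raviart basis $\{\varphi_e^{CR}\}$. For the Jacobi smoother, $R=\operatorname{diag}(\mathbb{A}_0^{vv})$, and for symmetric Gauss--Seidel $R=(D+L)D^{-1}(D+L)^T$ with $D$ the diagonal and $L$ the strictly lower-triangular part of $\mathbb{A}_0^{vv}$; in either case it is standard that $\calR(v,v)\simeq (Dv,v)$, so it suffices to analyze the diagonal.

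First I would establish the second relation $\mathcal{R}(v,v)\simeq h^{-2}\|v\|^2_{0,\kappa}$. The $e$-th diagonal entry of $\mathbb{A}_0^{vv}$ is $\calA_0(\varphi_e^{CR},\varphi_e^{CR}) = \sum_{T\supset e}\kappa_T\|\nabla\varphi_{e,T}\|_{0,T}^2$, and by shape regularity $\|\nabla\varphi_{e,T}\|_{0,T}^2\simeq h^{d-2}\simeq h^{-2}|T|$. Hence $(Dv,v) = \sum_e v_e^2\,\calA_0(\varphi_e^{CR},\varphi_e^{CR}) \simeq h^{-2}\sum_e v_e^2 \sum_{T\supset e}\kappa_T|T| = h^{-2}\sum_T\kappa_T|T|\sum_{e\subset T}v_e^2$. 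Since on each simplex the $L^2$ mass matrix in the CR basis is spectrally equivalent (uniformly, by shape regularity) to $|T|$ times the identity, $\sum_{e\subset T}v_e^2 \simeq |T|^{-1}\|v\|_{0,T}^2$, and because $\kappa$ is constant on each $T$ this sums to $\sum_T\kappa_T\|v\|_{0,T}^2=\|v\|_{0,\kappa}^2$. Combining gives $(Dv,v)\simeq h^{-2}\|v\|_{0,\kappa}^2$, and via $\calR(v,v)\simeq(Dv,v)$ the claim follows. The constants here depend only on shape regularity and are independent of $\kappa$ because $\kappa$ factors out element by element.

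For the first relation, the smoothing property $\calA_0(v,v)\lesssim\calR(v,v)$, I would use the inverse inequality on $V_h^{CR}$: $\|\nabla_h v\|_{0,T}\lesssim h_T^{-1}\|v\|_{0,T}$ for each $T$, which gives $\calA_0(v,v)=\sum_T\kappa_T\|\nabla v\|_{0,T}^2 \lesssim \sum_T\kappa_T h_T^{-2}\|v\|_{0,T}^2 \lesssim h^{-2}\|v\|_{0,\kappa}^2 \simeq \calR(v,v)$ by the second part just proven. (Alternatively one invokes the standard fact $\calA_0(v,v)\le\rho(\mathbb{A}_0^{vv})\|v\|^2$ together with $\rho(\mathbb{A}_0^{vv})\lesssim\rho(D)$ from shape regularity, which for a uniformly sparse matrix gives the same conclusion.)

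The one point requiring care — and the mild obstacle — is making sure all equivalence constants are genuinely $\kappa$-independent. This works precisely because $\kappa$ is piecewise constant on $\mathcal{T}_0\supseteq\Th$, so in every estimate above the coefficient appears as a fixed scalar multiplying a purely geometric quantity on each element, and hence passes transparently through the sums; there is no ``mixing'' of different $\kappa$ values within a single local estimate (unlike in the DG jump terms, which we are not dealing with here since $V_h^{CR}$ kills them). A secondary technical item is justifying $\calR(v,v)\simeq(Dv,v)$ for symmetric Gauss--Seidel on a \emph{uniformly sparse} matrix (bounded number of nonzeros per row, from shape regularity); this is standard (see e.g. the references cited in the paper) and the constant depends only on the sparsity bound, again not on $\kappa$. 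With these in hand, \eqref{eqn:smoother} follows.
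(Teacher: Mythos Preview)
Your proof is correct. The second equivalence $\calR(v,v)\simeq h^{-2}\|v\|_{0,\kappa}^2$ is argued essentially as in the paper (diagonal entries $\simeq \kappa_T h^{d-2}$ plus the local CR mass-matrix equivalence, summed element by element). For the first inequality $\calA_0(v,v)\lesssim\calR(v,v)$ you take a different route: you invoke the elementwise inverse inequality and bootstrap through the already-established second relation. The paper instead proves this bound directly by a combinatorial Cauchy--Schwarz argument,
\[
\calA_0(v,v)=\sum_{e}\sum_{e'\in\calE(e)}\calA_0(\varphi_e^{CR},\varphi_{e'}^{CR})v_e v_{e'}\le c_s\sum_e\calA_0(\varphi_e^{CR},\varphi_e^{CR})v_e^2=c_s\,\calR(v,v),
\]
with $c_s=\#\calE(e)$ (at most $5$ in 2D, $7$ in 3D). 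Both arguments are standard and $\kappa$-robust for the same reason you identify (the coefficient is constant on each element and factors out of every local estimate); the paper's version has the minor advantage of giving an explicit constant tied to the mesh sparsity and of not passing through the $L^2$ norm, while yours is a bit shorter once the second equivalence is in hand.
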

\begin{proof}
We only need to show this inequality for Jacobi smoother, since the Jacobi  and the
symmetric Gauss-Seidel methods are equivalent for any SPD matrix, see for example \cite[Proposition~6.12]{VassilevskiP-2008aa} or \cite[Lemma~3.3]{Zikatanov.L2008}. 

For any $v\in \Vcr$, we write $v =\sum_{e\in \Eho} v_e\varphi^{CR}_e$ where $\varphi^{CR}_e$ is the basis function with respect to $e\in \Eho$. Note that for Jacobi smoother, we  have 
$$
	\mathcal{R}(v, v) = \sum_{e\in \Eho} v_{e}^{2} \calA_{0}(\varphi^{CR}_e, \varphi^{CR}_e).
$$
For any $e\in \Eho$, let $\calE(e):=\{e'\in \Eho: \,\,\, e'\subset \partial\K, \quad \K\in \Th \quad  \partial\K\supset e\, \}$. 
Then, Cauchy-Schwarz and the arithmetic-geometric inequalities give 
\begin{eqnarray*}
\calA_0(v,v) &=& \sum_{e\in \Eho}\sum_{e^\prime\in \calE(e)}
\calA_0(\varphi_e^{CR},\varphi_{e^\prime}^{CR})v_e v_{e^{\prime}} \\
&\leq &  \sum_{e\in \Eho}\sum_{e^\prime\in \calE(e)}
\sqrt{\calA_0(\varphi_e^{CR},\varphi_e^{CR})}
\sqrt{\calA_0(\varphi_{e^\prime}^{CR},\varphi_{e^\prime}^{CR})}v_e v_{e^{\prime}} \\
&\leq & 
\frac12\sum_{e\in \Eho}\sum_{e^\prime\in \calE(e)}\left[
\calA_0(\varphi_e^{CR},\varphi_e^{CR}) v_e^2
+\calA_0(\varphi_{e^\prime}^{CR},\varphi_{e^\prime}^{CR})v_{e^{\prime}}^2\right]\\
&= & 
\sum_{e\in \Eho}
\calA_0(\varphi_e^{CR},\varphi_e^{CR}) v_e^2\leq c_s
\sum_{e\in \Eho}\calA_0(\varphi_e^{CR},\varphi_e^{CR}) v_e^2 = c_s \calR(v,v).
\end{eqnarray*}
The constant $c_s$ above only depends on the cardinality  $\#\calE(e)$, which is bounded by $5$ in 2D and $7$ in 3D.  This proves the first inequality in \eqref{eqn:smoother}.

Since the mesh is quasi-uniform, 
for any $v=\sum_{e} v_e\varphi_e^{CR}\in \Vcr$ and $\K\in \Th$, we have
\begin{equation}
\label{eqn:equiv-discrete-norm}
\|v\|_{0,\kappa,\K}^2 \simeq \sum_{e\subset \partial \K} v_e^2 \|\varphi_e^{CR}\|_{0,\kappa,\K}^2.
\end{equation}
Now by direct calculation, for any basis function $\varphi_{e}^{CR}$ we have
\begin{equation}\label{eq:basis-grad-l2}
h^{-2}\|\varphi_{e}^{CR}\|_{0,\kappa,\K}^{2}\simeq 
\|\nabla \varphi_{e}^{CR}\|_{0,\kappa,\K}^{2}.
\end{equation}
Therefore, by the equivalence relations
\eqref{eqn:equiv-discrete-norm} and \eqref{eq:basis-grad-l2},
we get
\begin{eqnarray*}
\calR(v,v) &=& 
\sum_{e\in \Eho} v_e^2 \|\nabla \varphi_{e}^{CR}\|_{0,\kappa}^{2}
 =  \sum_{e\in \Eho} v_e^2 \|\nabla \varphi_{e}^{CR}\|_{0,\kappa,\K^{+}\cup \K^{-}}^{2}\\
& = & \sum_{\K\in \Th}  \sum_{e\subset \partial\K} v_e^2\|\nabla \varphi_{e}^{CR}\|_{0,\kappa,\K}^{2}
\simeq  \sum_{\K\in \Th}  \sum_{e\subset \partial\K}
h^{-2}v_e^2\|\varphi_{e}^{CR}\|_{0,\kappa,\K}^{2}\\
& \simeq & h^{-2}\sum_{\K\in \Th}\|v\|_{0,\kappa,\K}^{2} = 
h^{-2}\|v\|_{0,\kappa}^{2}\;,
\end{eqnarray*}
which concludes the proof.
\end{proof}
\subsection{A stable Decomposition}\label{sec:stab-decomp}

In this subsection we give a detailed discussion of the stable decomposition.
The main tool is an 
operator $P_{h}^{\h}: V^{CR}_{h} \to \Vc$ that satisfies certain approximation and stability properties,
as stated in the next Lemma.
\begin{lemma}
\label{lm:interpolation}
There exists an interpolation operator $P_{h}^{\h}: V^{CR}_{h} \to \Vc$
that satisfies the following approximation and stability properties:
\begin{align}
\mbox{ Approximation:  } \qquad \|(I-P_{h}^{\h})v\|_{0,\kappa} & \leq C_a \h|\log 2\h/h|^{1/2} \|v\|_{1,h,\kappa},\quad &\forall\, v\in V^{CR}_h, &&\label{eqn:app}\\
\mbox{ Stability:  } \qquad\qquad\qquad\qquad |P_{h}^{\h}v|_{1,\kappa} & \leq C_s |\log 2\h/h|^{1/2} \|v\|_{1,h,\kappa}\quad &\forall\, v\in V^{CR}_h, &&\label{eqn:stab}
\end{align}
with constants $C_a$ and $C_s$ independent of the coefficient $\kappa$ and mesh size. 
\end{lemma}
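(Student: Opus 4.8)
The plan is to construct the operator $P_h^{\h}$ as a composition of two maps: first an averaging/enriching operator $E_h: V_h^{CR}\to \Vc$ on the fine mesh $\cT_h$ (in the case $\h=h$), and, when $\h=H>h$, a subsequent $L^2$-type projection or nodal interpolation from the fine conforming space onto the coarse conforming space $\Vc$. The natural candidate for $E_h$ is the standard Oswald-type interpolation: at each interior Lagrange node of the conforming $\mathbb{P}^1$ space, set the value of $E_h v$ to be the average of the values of $v$ taken from the simplices sharing that node; at boundary nodes set the value to zero. This is a well-studied operator for which one has the elementwise estimate $\sum_{T}\|v-E_h v\|_{0,T}^2 \lesssim \sum_{e\in\Eh} h_e\|\jump{v}\|_{0,e}^2$ and the $H^1$-stability $\|\nabla E_h v\|_{0,T}^2 \lesssim \sum_{T'\cap T\neq\emptyset}\|\nabla v\|_{0,T'}^2 + \sum_{e} h_e^{-1}\|\jump{v}\|_{0,e}^2$; I would prove (or cite, e.g.\ from \cite{Ayuso-de-DiosBZikatanovL-2009aa,BrennerSOwensL-2007aa}) weighted versions of these, which hold with $\kappa$-weights because $\kappa$ is constant on each element of $\cT_0\supseteq\cT_h$ and the averaging is local. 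For CR functions the jump term is further controlled since $\calP^0_e\jump{v}=0$, leaving only the mean-zero part of the jump, which scales like $h_e^{1/2}|\nabla v|$ on the two adjacent elements; this is what ultimately replaces the jump seminorm by $|v|_{1,h,\kappa}$ on the right-hand sides.

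With $E_h$ in hand for $\h=h$, the approximation bound \eqref{eqn:app} and stability bound \eqref{eqn:stab} with $\log 2\h/h = \log 2 = O(1)$ follow directly, so the logarithm is harmless there. For $\h = H > h$, I would set $P_h^{\h} = \Pi_H^{\mathrm{conf}} \circ E_h$, where $\Pi_H^{\mathrm{conf}}$ is a quasi-interpolation (Scott–Zhang or Clément type, adapted to respect the subdomain partition $\cT_S$) from the fine conforming space into $\Vc$. The coarse interpolation contributes the usual $\|(I-\Pi_H^{\mathrm{conf}})w\|_{0} \lesssim H|w|_1$ and $|\Pi_H^{\mathrm{conf}} w|_1 \lesssim |w|_1$ estimates; composing with $E_h$ and using $\|v - E_h v\|_{0,\kappa}\lesssim h|v|_{1,h,\kappa}\le H|v|_{1,h,\kappa}$ and $|E_h v|_{1,\kappa}\lesssim \|v\|_{1,h,\kappa}$ gives the result without the logarithm. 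The logarithmic factor therefore does \emph{not} come from the mesh ratio per se; it is the price paid for the $\kappa$-weighting when $\kappa$ jumps, and it enters through a weighted discrete Sobolev / trace inequality of the form $\|w\|_{L^\infty(T)}^2 \lesssim |\log(h_T/h)|\,\|w\|_{1,h}^2$ (or an $L^2$-in-a-neighborhood-of-the-interface estimate) used near the cross points of the subdomain partition, exactly as in \cite{XuJZhuY-2008aa,ZhuY-2008aa,Bramble.J;Pasciak.J;Schatz.A1989}. Concretely, on elements touching a subdomain interface one cannot bound the weighted $L^2$ norm of $v-P_h^{\h}v$ by the weighted energy norm uniformly in $\kappa$; one loses a factor $|\log(2\h/h)|$, and this is where the careful bookkeeping lives.

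The key steps, in order, would be: (1) recall/establish the weighted approximation and $H^1$-stability estimates for the Oswald averaging operator $E_h$ on $\cT_h$, reducing jump terms to $|v|_{1,h,\kappa}$ using $\calP^0_e\jump{v}=0$; (2) handle $\h=h$ immediately from (1); (3) for $\h>h$, compose with a subdomain-respecting quasi-interpolant $\Pi_H^{\mathrm{conf}}$ onto $\Vc$ and use standard (unweighted, then reassembled with the piecewise-constant $\kappa$) interpolation error and stability estimates; (4) prove the weighted discrete Sobolev inequality near interface cross-points that produces the $|\log(2\h/h)|^{1/2}$ factor, and insert it where the coefficient jump obstructs a uniform bound; (5) assemble \eqref{eqn:app} and \eqref{eqn:stab}. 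The main obstacle is step (4): showing that the coefficient jump costs at most a logarithmic factor (and not a power of $\mathcal{J}(\kappa)$ or of $h$) requires the delicate weighted interpolation estimate localized around the vertices where several subdomains meet, and it is precisely the non-nestedness of $V_h^{CR}$ together with the jump in $\kappa$ that forces a genuinely new argument rather than a citation of \cite{XuJZhuY-2008aa}.
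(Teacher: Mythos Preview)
Your construction has a genuine gap at step~(1). The global Oswald averaging operator $E_h:V_h^{CR}\to V_h^{\rm conf}$ does \emph{not} satisfy the weighted estimates $\|v-E_hv\|_{0,\kappa}\lesssim h|v|_{1,h,\kappa}$ and $|E_hv|_{1,\kappa}\lesssim\|v\|_{1,h,\kappa}$ with constants independent of $\kappa$. The reason is exactly what you gloss over when you say ``the averaging is local'': local here means a vertex patch, and a patch around a vertex on the subdomain interface contains elements from \emph{several} subdomains. The elementwise bound $\kappa_T\|v-E_hv\|_{0,T}^2\lesssim h^2\sum_{T'\sim T}\kappa_T\|\nabla v\|_{0,T'}^2$ then carries the wrong weight on the neighbouring $T'$, and you pick up a factor $\kappa_T/\kappa_{T'}\le\mathcal{J}(\kappa)$. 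Since this already fails for $\h=h$, your step~(2) does not go through, and no subsequent ``subdomain-respecting'' coarse interpolant in step~(3) can repair a fine conforming function whose interface values are already contaminated by both sides.

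The paper's remedy is structurally different from your plan. It defines the enriching operator $E_i$ \emph{subdomain by subdomain}, so that averaging at a DOF $p\in\overline{\Omega}_i$ uses only values of $v$ from elements (or boundary faces) in $\overline{\Omega}_i$; this is what makes the weighted estimates of Lemma~\ref{lm:local-iso} uniform in $\kappa$. The target space is the higher-order conforming space $S_h(\Omega_i)=\mathbb{P}^d\cap C^0$, which allows the edge/face midpoint values of $v$ to be kept. A Scott--Zhang operator $\mathcal{Q}_i$ then maps to $V_{\h}^{\rm conf}(\Omega_i)$. By construction the result is single-valued in the interior of each interface $\Gamma$, but it is generally multi-valued on the wirebasket $\partial\Gamma$; the definition \eqref{eq:loc-int} simply sets the value to zero there. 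The logarithm is the price of this last correction: bounding $\h^2\|\chi_i\|_{0,\partial\Gamma}^2$ requires the discrete Sobolev inequalities of Bramble--Xu, which produce the factor $\log(2\h/h)$. So your intuition that the log is tied to cross points is correct, but the mechanism is not a ``weighted discrete Sobolev inequality'' applied where the jump obstructs a uniform bound; it is an unweighted discrete Sobolev bound applied to a function that was \emph{designed} (via the subdomain-wise $E_i$) so that the only place a correction is needed is $\partial\Gamma$.
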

A construction of such an operator $P_{h}^{\h}$, and proof of the above results,
are given in the Appendix~\ref{sec:projection}. We would like to
point out that the operator $P_{h}^{\h}$ is not used in the actual
implementation of the preconditioner $B$, as it is plainly
 seen from~\eqref{eq:2Loperator}. However, the operator $P_{h}^{\h}$ and its
  approximation and stability properties play a crucial role in the
  analysis.

Observe that on the right hand side
of \eqref{eqn:app} and \eqref{eqn:stab}, the bounds are given in terms
of the weighted full $H^{1}$-norm $\|v\|_{1,h,\kappa}.$ In general,
one cannot replace the norm $\|v\|_{1,h,\kappa}$ by the energy norm $|v|_{1,h,\kappa}$ induced by the bilinear form $a_{h}(\cdot, \cdot)$. To replace the full norm by the semi-norm, one might use the Poincar\'e-Friedrichs inequality for the nonconforming finite element
space (cf. \cite{Dolejsi.V;Feistauer.M;Felcman.J1999,BrennerS-2003aa}) to get:
\begin{eqnarray*}
	\|v\|^{2}_{0,\kappa} &\le& \left(\max_{T\in \Th} \kappa_{T}\right)  \int_{\Omega}|v|^{2} dx \lesssim  \left(\max_{T\in \Th} \kappa_{T}\right) |v|^{2}_{1,h} \lesssim \frac{\max_{T\in \Th} \kappa_{T}}{\min_{T\in \Th} \kappa_{T}} |v|^{2}_{1, h, \kappa}.
\end{eqnarray*}
From the above inequality, we have: 
\begin{corollary}
\label{cor:inter-jmp}
There exists an interpolation operator $P_{h}^{\h}: V_{h}^{CR} \to \Vc$
satisfying the following approximation and stability properties:
\begin{eqnarray}
		\|(I-P_{h}^{\h}) v \|_{0,\kappa} & \lesssim& \mathcal{J}^{1/2}(\kappa) \h |\log (2\h /h)|^{1/2} |v |_{1,h,\kappa}\;, \quad \forall\, 	v\, \in V_{h}^{CR}\;,\label{eqn:int-jmp-app}\\
	| P_{h}^{\h} v  |_{1,\kappa} &\lesssim& \mathcal{J}^{{1/2}}(\kappa) |\log (2\h /h)|^{1/2} |v |_{1,h,\kappa}\;, \quad \forall\, 	v\, \in V_{h}^{CR}\;,\label{eqn:int-jmp-stab}
\end{eqnarray}	
where $\mathcal{J}(\kappa) = \max_{T\in \Th} \kappa_{T} / \min_{T\in \Th} \kappa_{T}$ is the jump of the coefficient.
\end{corollary}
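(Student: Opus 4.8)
The plan is to take for $P_{h}^{\h}$ precisely the operator produced by Lemma~\ref{lm:interpolation}, and to trade the full weighted norm $\|\cdot\|_{1,h,\kappa}$ appearing on the right-hand sides of~\eqref{eqn:app} and~\eqref{eqn:stab} for the energy semi-norm $|\cdot|_{1,h,\kappa}$, at the price of a factor $\mathcal{J}^{1/2}(\kappa)$. The only additional ingredient is the discrete Poincar\'e--Friedrichs inequality for $V_{h}^{CR}$ displayed immediately above the statement of the corollary.

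First I would establish, for every $v\in V_{h}^{CR}$, the bound
\[
\|v\|_{1,h,\kappa}^{2} = \|v\|_{0,\kappa}^{2} + |v|_{1,h,\kappa}^{2}
\lesssim \mathcal{J}(\kappa)\,|v|_{1,h,\kappa}^{2},
\]
which follows from the displayed estimate $\|v\|_{0,\kappa}^{2}\lesssim \mathcal{J}(\kappa)\,|v|_{1,h,\kappa}^{2}$ (itself a consequence of the Poincar\'e--Friedrichs inequality for nonconforming elements, cf.~\cite{Dolejsi.V;Feistauer.M;Felcman.J1999,BrennerS-2003aa}) together with $\mathcal{J}(\kappa)\ge 1$. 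Taking square roots gives $\|v\|_{1,h,\kappa}\lesssim \mathcal{J}^{1/2}(\kappa)\,|v|_{1,h,\kappa}$, and substituting this into~\eqref{eqn:app} and~\eqref{eqn:stab} yields~\eqref{eqn:int-jmp-app} and~\eqref{eqn:int-jmp-stab} at once. The constants stay independent of $\kappa$ and $h$ since the Poincar\'e--Friedrichs constant depends only on $\Omega$ and the shape-regularity of $\Th$.

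There is no real obstacle in the corollary itself; it is an immediate consequence of Lemma~\ref{lm:interpolation}. The genuine work --- constructing $P_{h}^{\h}$ and proving the $\kappa$-uniform approximation and stability bounds~\eqref{eqn:app}--\eqref{eqn:stab} with their logarithmic factor --- is what is carried out in Appendix~\ref{sec:projection}. The one point worth flagging is that the Poincar\'e--Friedrichs step is exactly why the full norm cannot be replaced by the semi-norm for free: the factor $\mathcal{J}^{1/2}(\kappa)$ is the price of this reduction, and it is precisely the factor inherited by the two-level condition number bound~\eqref{bound:AAA}.
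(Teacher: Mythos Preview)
Your proposal is correct and follows exactly the paper's approach: take the operator from Lemma~\ref{lm:interpolation}, then use the displayed Poincar\'e--Friedrichs estimate $\|v\|_{0,\kappa}^{2}\lesssim \mathcal{J}(\kappa)\,|v|_{1,h,\kappa}^{2}$ (together with $\mathcal{J}(\kappa)\ge 1$) to replace $\|v\|_{1,h,\kappa}$ by $\mathcal{J}^{1/2}(\kappa)\,|v|_{1,h,\kappa}$ on the right-hand sides of~\eqref{eqn:app}--\eqref{eqn:stab}.
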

The approximation and stability properties given in
Corollary~\ref{cor:inter-jmp} depend on the coefficient variation
$\mathcal{J}(\kappa)$. However, by imposing some 
constraints on the finite element space $V_{h}^{CR}$, it is possible to get rid of this dependence obtaining a robust result.
Following~\cite[Definition
4.1]{Toselli.A;Widlund.O2005} we introduce
the index set of \emph{floating} subdomains (the subdomains not touching the
Dirichlet boundary):
\begin{equation}\label{def:I}
 {\mathcal I}:=\left\{\,\, i \,\, :\,\, \mbox{meas}_{d-1} (\partial \Omega
\cap \partial \Omega_{i}) = 0\,\right\}\;.
\end{equation} 
We then introduce the subspace
$\widetilde{V}_{h}^{CR} \subset V_{h}^{CR}$:
\begin{equation}\label{vtilde}
  \widetilde{V}_{h}^{CR}:= 
\left\{v \in V_{h}^{CR}: \int_{\Omega_{i}} v dx =0 \;\; \forall i\in
  {\mathcal I} \right\}.
\end{equation}
The key feature of the above subspace is the fact that the
Poincar\'e-Friedrichs inequality for nonconforming finite elements
space (cf. \cite{Dolejsi.V;Feistauer.M;Felcman.J1999,BrennerS-2003aa}) now holds
on each subdomain, which allows us to replace the full norm
$\|v\|_{1,h,\kappa}$ by the semi-norm $|v|_{1,h,\kappa}$, for any
$v\in\widetilde{V}_{h}^{CR}$. 

We remark that the condition on the zero-average in \eqref{vtilde}, is not essential; other conditions could be used (see \cite{Toselli.A;Widlund.O2005}) as
long as they allow for the application of a Poincar\'e-type
inequality. At this point, we would like to emphasize that the dimension of
$\widetilde{V}_{h}^{CR}$ is related to the number of floating
subdomains and in fact: $\dim(\widetilde{V}_{h}^{CR})={\rm dim}(V_{h}^{CR}) -
m_0$, where $m_{0} = \#\mathcal{I}$ is the cardinality of
$\mathcal{I}$.

By restricting now the action of the operator
$P_{h}^{\h}$ to functions in $\widetilde{V}_{h}^{CR}$, we have the
following result, as an easy corollary from
Lemma~\ref{lm:interpolation}. Its proof follows (as mentioned above) by applying Poincar\'e-Friederichs inequality (for nonconforming) on each subdomain.
\begin{corollary}\label{cor:P}
  Let $\widetilde{V}_{h}^{CR}\subset V_h^{CR}$ be the subspace defined in
  \eqref{vtilde}. Then, there exist an operator $P_{h}^{\h}:
  V_{h}^{CR} \to \Vc$ satisfying
\begin{eqnarray}
		\|(I-P_{h}^{\h}) v \|_{0,\kappa} & \lesssim&  \h |\log (2\h /h)|^{1/2} |v |_{1,h,\kappa}\;, \quad \forall\, 	v\, \in V_{h}^{CR}\;,\label{eqn:wapp}\\
	| P_{h}^{\h} v  |_{1,\kappa} &\lesssim&  |\log (2\h /h)|^{1/2} |v |_{1,h,\kappa}\;, \quad \forall\, 	v\, \in V_{h}^{CR}\;.\label{eqn:wstab}
\end{eqnarray}	
\end{corollary}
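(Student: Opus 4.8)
The plan is to derive Corollary~\ref{cor:P} from Lemma~\ref{lm:interpolation} by a two-step argument: first obtain the estimates in terms of the full norm $\|v\|_{1,h,\kappa}$ (directly from the Lemma), then replace $\|v\|_{1,h,\kappa}$ by the semi-norm $|v|_{1,h,\kappa}$ whenever $v\in\widetilde{V}_h^{CR}$, by invoking a \emph{localized} Poincar\'e--Friedrichs inequality on each subdomain $\Omega_i$. The subspace $\widetilde{V}_h^{CR}$ was engineered precisely so that on each floating subdomain the function has zero mean, and on each non-floating subdomain the trace on the portion of $\partial\Omega$ touching $\partial\Omega_i$ controls the $L^2$-norm; either way one gets $\|v\|_{0,\Omega_i}\lesssim |v|_{1,h,\Omega_i}$ with a constant depending only on the shape of $\Omega_i$ (and hence, after scaling, at worst on $\mathrm{diam}(\Omega_i)$, which is an absolute constant of the geometry, not of $h$ or $\kappa$).

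The first step is immediate: Lemma~\ref{lm:interpolation} already furnishes an operator $P_h^{\h}$ with
\begin{align*}
\|(I-P_h^{\h})v\|_{0,\kappa} &\le C_a\,\h\,|\log 2\h/h|^{1/2}\,\|v\|_{1,h,\kappa},\\
|P_h^{\h}v|_{1,\kappa} &\le C_s\,|\log 2\h/h|^{1/2}\,\|v\|_{1,h,\kappa},
\end{align*}
so I only need to bound $\|v\|_{1,h,\kappa}^2=\|v\|_{0,\kappa}^2+|v|_{1,h,\kappa}^2$ by $C\,|v|_{1,h,\kappa}^2$ for $v\in\widetilde{V}_h^{CR}$. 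For the second step I would write $\|v\|_{0,\kappa}^2=\sum_{i=1}^M \kappa|_{\Omega_i}\|v\|_{0,\Omega_i}^2$ and apply, subdomain by subdomain, the nonconforming Poincar\'e--Friedrichs inequality from \cite{Dolejsi.V;Feistauer.M;Felcman.J1999,BrennerS-2003aa}: for $i\in\mathcal{I}$ the zero-average constraint $\int_{\Omega_i}v\,dx=0$ built into \eqref{vtilde} gives $\|v\|_{0,\Omega_i}\lesssim |v|_{1,h,\Omega_i}$, while for $i\notin\mathcal{I}$ the inequality holds because $\mathrm{meas}_{d-1}(\partial\Omega\cap\partial\Omega_i)>0$ and $v$ satisfies the (weak, mean-value) homogeneous Dirichlet condition there. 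Multiplying each such estimate by $\kappa|_{\Omega_i}$ and summing yields $\|v\|_{0,\kappa}^2\lesssim\sum_i \kappa|_{\Omega_i}|v|_{1,h,\Omega_i}^2=|v|_{1,h,\kappa}^2$, hence $\|v\|_{1,h,\kappa}\lesssim|v|_{1,h,\kappa}$, and substituting this into the two displayed bounds gives exactly \eqref{eqn:wapp} and \eqref{eqn:wstab}. Note the crucial point that the $\kappa$-weighting survives the argument unharmed: because the Poincar\'e constant is per-subdomain and $\kappa$ is constant on each $\Omega_i$ (recall $\Th\subset\mathcal{T}_0$ resolves $\kappa$), no ratio $\mathcal{J}(\kappa)$ is introduced—in contrast to Corollary~\ref{cor:inter-jmp}, where a single global Poincar\'e inequality forced the factor $\mathcal{J}^{1/2}(\kappa)$.

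The main obstacle—really the only delicate point—is justifying the localized nonconforming Poincar\'e--Friedrichs inequality with a constant independent of $h$ and uniform over the family of partitions: one must ensure that the broken $H^1$ semi-norm $|v|_{1,h,\Omega_i}$ (rather than a genuine $H^1$ semi-norm) still controls $\|v\|_{0,\Omega_i}$ for Crouzeix--Raviart functions, using the fact that $v$ is continuous at edge/face midpoints and hence the jumps have vanishing mean. This is exactly the content of the cited results, so one invokes them; the shape-regularity and the finite, mesh-independent number of subdomains $M$ guarantee the constant can be taken uniform. A minor bookkeeping point is that the operator $P_h^{\h}$ in the statement is still defined on all of $V_h^{CR}$ (not just on $\widetilde{V}_h^{CR}$)—the inequalities \eqref{eqn:wapp}--\eqref{eqn:wstab} are simply asserted for $v$ in the smaller space implicitly, or more precisely the right-hand side $|v|_{1,h,\kappa}$ is the correct majorant only on $\widetilde V_h^{CR}$; I would keep the same $P_h^{\h}$ from Lemma~\ref{lm:interpolation} and state the estimates for $v\in\widetilde V_h^{CR}$, which is how they are used in the stable-decomposition argument of the next subsection.
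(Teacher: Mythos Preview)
Your proposal is correct and follows exactly the approach the paper indicates: the paper itself says the proof ``follows (as mentioned above) by applying Poincar\'e--Friedrichs inequality (for nonconforming) on each subdomain,'' which is precisely your two-step argument of invoking Lemma~\ref{lm:interpolation} and then replacing $\|v\|_{1,h,\kappa}$ by $|v|_{1,h,\kappa}$ via a subdomain-wise Poincar\'e--Friedrichs inequality. You also correctly note the apparent typo in the statement (the displayed inequalities are intended for $v\in\widetilde{V}_h^{CR}$, not all of $V_h^{CR}$), consistent with how the result is used in Lemma~\ref{lm:stab-decomp}.
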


With the aid of the results from~Corollary~\ref{cor:inter-jmp} and
Corollary~\ref{cor:P}, we can finally show the stability of the decomposition \eqref{eqn:space-decomp}.
\begin{lemma}
	\label{lm:stab-decomp}
	For any $v \in V_{h}^{CR}$, let $\chi=P^{\h}_h(v)\, \in\Vc$, then the following stable decomposition property holds:
	\begin{equation}
		\label{eqn:stab-decomp0}
		\mathcal{R}(v-\chi, v-\chi)  + a(\chi,\chi) \lesssim \mathcal{J}(\kappa) (\h/h)^{2} |\log 2\h/h| |v|^{2}_{1,h,\kappa} \;.
	\end{equation}
In particular, for any $v \in \widetilde{V}_{h}^{CR}$ we have 
	\begin{equation}
		\label{eqn:stab-decomp}
		\mathcal{R}(v-\chi, v-\chi)  + a(\chi,\chi) \lesssim (\h/h)^{2}|\log 2\h/h| |v|^{2}_{1,h,\kappa} \;.
	\end{equation}
\end{lemma}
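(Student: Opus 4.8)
The plan is to bound each of the two terms on the left-hand side of \eqref{eqn:stab-decomp0} separately, using the smoothing property from Lemma~\ref{lm:smoothCR} for the first term and the stability property of $P_h^{\h}$ for the second. First I would write $\chi = P_h^{\h}(v)$ and estimate $a(\chi,\chi) = |\chi|_{1,\kappa}^2$ directly by \eqref{eqn:int-jmp-stab} (from Corollary~\ref{cor:inter-jmp}), which immediately yields $a(\chi,\chi) \lesssim \mathcal{J}(\kappa)|\log(2\h/h)|\,|v|_{1,h,\kappa}^2$. This is the easy half.

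For the first term, I would invoke the second equivalence in \eqref{eqn:smoother}, namely $\mathcal{R}(w,w) \simeq h^{-2}\|w\|_{0,\kappa}^2$ for $w \in V_h^{CR}$; this requires checking that $v - \chi$ lies in $V_h^{CR}$, which holds because $\Vc \subset V_h^{CR}$. Hence $\mathcal{R}(v-\chi,v-\chi) \simeq h^{-2}\|v-\chi\|_{0,\kappa}^2 = h^{-2}\|(I-P_h^{\h})v\|_{0,\kappa}^2$. Now apply the approximation estimate \eqref{eqn:int-jmp-app}: $\|(I-P_h^{\h})v\|_{0,\kappa}^2 \lesssim \mathcal{J}(\kappa)\,\h^2 |\log(2\h/h)|\,|v|_{1,h,\kappa}^2$. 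Multiplying by $h^{-2}$ gives $\mathcal{R}(v-\chi,v-\chi) \lesssim \mathcal{J}(\kappa)(\h/h)^2|\log 2\h/h|\,|v|_{1,h,\kappa}^2$.

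Combining the two bounds — and noting that $(\h/h)^2 \ge 1$ since $\h \ge h$, so the coarse-solver term is absorbed — yields \eqref{eqn:stab-decomp0}. For the refined estimate \eqref{eqn:stab-decomp}, I would repeat the identical argument verbatim but with $v \in \widetilde{V}_h^{CR}$, replacing the use of Corollary~\ref{cor:inter-jmp} by Corollary~\ref{cor:P}: the estimates \eqref{eqn:wapp} and \eqref{eqn:wstab} are precisely \eqref{eqn:int-jmp-app}--\eqref{eqn:int-jmp-stab} with the factor $\mathcal{J}(\kappa)$ removed, so the same chain of inequalities produces the $\mathcal{J}(\kappa)$-free bound.

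I do not anticipate a serious obstacle here, since all the analytical work has been front-loaded into Lemma~\ref{lm:smoothCR} and the interpolation estimates of Corollaries~\ref{cor:inter-jmp} and~\ref{cor:P}; the only points requiring a moment of care are verifying $v - \chi \in V_h^{CR}$ so that the norm equivalence $\mathcal{R}(\cdot,\cdot)\simeq h^{-2}\|\cdot\|_{0,\kappa}^2$ applies, and observing that the coarse term $a(\chi,\chi)$ is dominated by the smoother term because $(\h/h)^2\ge 1$. The bookkeeping of the logarithmic factor is automatic since both constituent estimates carry the same $|\log(2\h/h)|$ (or its square root, squared).
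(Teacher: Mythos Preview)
Your proposal is correct and follows essentially the same approach as the paper: bound $\mathcal{R}(v-\chi,v-\chi)$ via the equivalence $\mathcal{R}(\cdot,\cdot)\simeq h^{-2}\|\cdot\|_{0,\kappa}^2$ from Lemma~\ref{lm:smoothCR} together with the approximation estimate of $P_h^{\h}$, bound $a(\chi,\chi)$ via the stability estimate of $P_h^{\h}$, and swap Corollary~\ref{cor:inter-jmp} for Corollary~\ref{cor:P} in the $\widetilde{V}_h^{CR}$ case. Your extra remarks --- that $v-\chi\in V_h^{CR}$ because $\Vc\subset V_h^{CR}$, and that $(\h/h)^2\ge 1$ lets the coarse term be absorbed --- are valid refinements that the paper leaves implicit.
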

\begin{proof}
  Below, we give a proof \eqref{eqn:stab-decomp}. Given any $v\in \widetilde{V}_{h}^{CR}$, let $\chi \in \Vc$ be
  defined as $\chi:=P^{\h}_h v $. By the
  approximation property \eqref{eqn:wapp} of $P^{\h}_h$ given in
Corollary~\ref{cor:P}, we have
\begin{align*}
\mathcal{R}(v- \chi, v- \chi)&\lesssim h^{-2} \| v-\chi\|^{2}_{0,\kappa} 
= h^{-2} \|v-P^{\h}_{h}v\|^{2}_{0,\kappa}
\lesssim (\h/h)^{2} |\log 2\h/h||v|_{1,h,\kappa}^{2},
\end{align*}
where in the first inequality, we have used \eqref{eqn:smoother} from Lemma~\ref{lm:smoothCR}.
For the second term, the stability \eqref{eqn:wstab} of $P^{\h}_h$ from Corollary \ref{cor:P} gives,
\begin{equation*}
a(\chi,\chi)=|P^{\h}_h v |_{1,\kappa}^{2}\lesssim |\log 2\h /h| |v
|^{2}_{1,h,\kappa}.
\end{equation*}
The proof of \eqref{eqn:stab-decomp} is complete. The proof of \eqref{eqn:stab-decomp0} is essentially the same but using Corollary~\ref{cor:inter-jmp} instead of  Corollary~\ref{cor:P}.
\end{proof}

We have now all ingredients to complete the proof of Theorem~\ref{teo1}.
\begin{proof} [Proof of Theorem \ref{teo1}.]
  To estimate the maximum eigenvalue of $BA_{0}^{CR}$,
  let $\chi \in \Vc$ and $v \in V_{h}^{CR}$ be arbitrary. We set
  $v_0=(v-\chi)$, and so $v = v_0+\chi$.  The Cauchy-Schwarz
  inequality and Lemma~\ref{lm:smoothCR}  yield
\begin{align*}
	\calA_{0} (v, v)&= \calA_{0} ( v_{0} +  \chi,  v_{0} +  \chi)  \le 2(\calA_0(v_{0}, v_{0}) +  \calA_{0} (  \chi,   \chi)) \le c_{1}\left( \mathcal{R}(v_0,v_0)+ a(\chi,\chi)\right), &&
	\end{align*}
where $c_{1} = 2\max\{c_{s}, 1\}$, with $c_{s}$ (defined in the proof of  Lemma~\ref{lm:smoothCR}), is a constant independent of $\kappa$ and mesh size. 
 Using the identity \eqref{eq:additive-inverse} and the fact that $\chi\in \Vc$ is arbitrary, we have
 \[
	\calA_{0} (v, v)\le c_{1}( B^{-1} v ,  v), \quad \forall v\in V_{h}^{CR} .
\]
Hence, 
$$\lambda_{\max}(BA_{0}^{CR}) = \max_{v\in V_{h}^{CR}} \frac{\calA_{0}(v, v)}{(B^{-1}v, v)} =\max_{v\in V_{h}^{CR}} \frac{(B^{-1} BA_{0}^{CR} v, v)}{(B^{-1}v, v)} \le c_{1},$$
which is uniformly bounded, independently of the coefficient and the mesh size.  

Let $\varpi=\h/h$ be the ratio of the mesh sizes. For the lower bounds of $\lambda_{\min}$ and $\lambda_{m_{0} + 1}$, Lemma~\ref{lm:stab-decomp} with $\chi = P_{h}^{\h} v$ together with  \eqref{eq:additive-inverse} give
\begin{eqnarray*}
	&(B^{-1} v, v) \le \mathcal{R}(v-\chi, v-\chi)  + \left|\chi \right|_{1,\kappa}^{2} \lesssim \mathcal{J}(\kappa) \varpi^{2} |\log 2\varpi | \calA_0(v,v),  &\forall v \in \Vcr,\\
	&(B^{-1} v, v) \le \mathcal{R}(v-\chi, v-\chi)  + \left|\chi \right|_{1,\kappa}^{2} \lesssim \varpi^{2} |\log 2\varpi |\calA_0(v,v), &\forall v\in \widetilde{V}_{h}^{CR}.
\end{eqnarray*}
The first inequality implies that 
$$\lambda_{\min} (BA_{0}^{CR})  = \min_{v\in V_{h}^{CR}} \frac{\calA_{0} (v,v)}{(B^{-1}v, v)}\gtrsim \frac{1}{  \mathcal{J}(\kappa) \varpi^{2} |\log 2\varpi |}.$$ 

The second inequality, together with the fact that $\dim(\widetilde{V}_{h}^{CR}) = \dim(V_{h}^{CR}) - m_{0}$ and the \emph{minimax principle}~\cite[Theorem 8.1.2]{1996GolubG_Van-LoanC-aa}) gives
$$\lambda_{m_{0}+1}(BA_{0}^{CR}) \ge \min_{v\in \widetilde{V}_{h}^{CR}} \frac{\calA_{0} (v,v)}{ (B^{-1}v,v)} \gtrsim \frac{1}{\varpi^{2} |\log 2\varpi |}.$$ 
Therefore, the condition number $\mathcal{K}(BA_{0}^{CR})$ and the effective
condition $ \mathcal{K}_{m_{0}}(BA_{0}^{CR})$ can be respectively bounded by
\begin{equation*}
	\mathcal{K}(BA_{0}^{CR}) \le C_{0} \mathcal{J}(\kappa)\varpi^{2} |\log 2\varpi |,  \mbox{ and } \mathcal{K}_{m_{0}}(BA_{0}^{CR}) \leq C_{1} \varpi^{2}|\log 2\varpi |,
\end{equation*}
 with $C_{0} $ and $C_{1}$, constants independent of the coefficient and mesh size.  
The inequality \eqref{eqn:2LCG-rate} then follows directly from \eqref{eqn:CG}.
\end{proof}

\subsection{Multilevel Preconditioner for $\calA_0(\cdot,\cdot)$ on
  $V_{h}^{CR}$
\label{sec:multilevel}}
We now introduce a multilevel preconditioner, using the two-level theory
developed before. The idea is to replace $[A^{C}]^{-1}$ in~\eqref{eq:2Loperator} with a spectrally
equivalent operator $B^C:\Vc\mapsto \Vc$, which corresponds to the additive
BPX preconditioner 
(see e.g.~\cite{1990BrambleJ_PasciakJ_XuJ-aa,XuJ-1992aa}).  

Given a sequence of quasi-uniform triangulations $\cT_{j}$ for $j =0, 1, \cdots, J$, we denote by $W_{j}=V_{h_{j}}^{\rm{conf}}\;\; (j=0, 1,\cdots, J) $ and consider the family of nested conforming spaces (defined w.r.t. the family of partitions  $\{\cT_{j}\}_{j=0}^{J}$): 
$$W_{0} \subset W_{1} \subset \cdots \subset W_{J}\;.$$
 Here, we assume that the coarsest triangulation $\cT_{0}$ resolves the jump in the coefficient, and without loss of generality, we also assume that  $h_{j} \simeq 2^{-j} \;\;(j = 0, \cdots, J)$ and $h = h_{J}$. The space decomposition that we use to define the multilevel BPX preconditioner is:
\begin{equation}\label{space:decom2}
	V_h^{CR}= V_{h}^{CR} + \sum_{j=0}^{J} W_{j} = \sum_{j=0}^{J+1} W_{j},
\end{equation}
where we have denoted $W_{J+1}=\Vcr$. 
For $j=0, \cdots, J$ we denote  by $A_{j}^{C}$ 
the operator corresponding to  the restriction of $a(\cdot,\cdot)$ to $W_j$, namely 
\[
(A_j^C v_j,w_j) = a(v_j,w_j), \quad \forall v_j\in W_j, \quad \forall
w_j\in W_j.
\] 
The operator form of the
multilevel preconditioner then reads:
\begin{equation}\label{def-Bm}
B_{\textrm{ML}}: \Vcr \mapsto \Vcr, \qquad\quad
B_{\textrm{ML}} := [A^C_0]^{-1}Q^C_0 + \sum_{j=1}^{J+1} R_j^{-1}Q_j.
\end{equation}
Here, $Q_j:\Vcr\mapsto W_j$ is the $L_2$-orthogonal projection on
$W_j$ for $j=0,\ldots,J$ and we set $Q_{J+1}=I$. We use an exact solver on
the coarsest grid. With this notation in hand, one can prove that 
\begin{equation}\label{def-bilm}
(B_{\textrm{ML}}^{-1}v,v)=
\inf_{\sum_{j=0}^{J+1} w_j=v}\left[a(w_0,w_0) + \sum_{j=1}^{J+1} \mathcal{R}_j(w_j,w_j)\right].
\end{equation}
Here $\calR_j(\cdot,\cdot)$, $j=1,\ldots,(J+1)$ correspond to Jacobi or symmetric Gauss-Seidel smoothers,
and the proof of~\eqref{def-bilm} is similar to \eqref{eq:additive-inverse} for the two-level case. 

Next two results will be used in our convergence analysis.
\begin{lemma}[{\cite[Lemma 4.2]{XuJZhuY-2008aa}}]\label{yyy}
Let $\mathcal{R}_{j}(\cdot, \cdot)$ be the Jacobi or the symmetric Gauss-Siedel smoother for the solution of the discretization \eqref{a:conf} on $W_j$ space ($\forall j = 1, \cdots, J$). Then,
\begin{equation*}
a(w,w)\lesssim \mathcal{R}_{j}(w,w)\lesssim h_{j}^{-2} \|w\|^{2}_{0,\kappa} \quad \forall\, w\in W_j.
\end{equation*}
\end{lemma}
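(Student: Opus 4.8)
The plan is to mimic the proof of Lemma~\ref{lm:smoothCR} verbatim, replacing the Crouzeix--Raviart basis by the conforming nodal basis on the level $W_j$ and the unweighted bookkeeping by the $\kappa$-weighted one; this is exactly \cite[Lemma 4.2]{XuJZhuY-2008aa}, so only a sketch is needed. First I would reduce to the Jacobi smoother: since $A_j^C$ is symmetric positive definite, the (damped) Jacobi and the symmetric Gauss--Seidel smoothers are spectrally equivalent (see \cite[Proposition~6.12]{VassilevskiP-2008aa} or \cite[Lemma~3.3]{Zikatanov.L2008}), so it suffices to prove both bounds for $\calR_j(w,w)\simeq\sum_p w_p^2\,a(\phi_p^{(j)},\phi_p^{(j)})$, where $\{\phi_p^{(j)}\}_p$ is the nodal basis of $W_j$ and $w=\sum_p w_p\phi_p^{(j)}$, the hidden constants depending only on the fixed damping parameter.

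For the lower bound $a(w,w)\lesssim\calR_j(w,w)$ I would expand $a(w,w)=\sum_{p,q}w_pw_q\,a(\phi_p^{(j)},\phi_q^{(j)})$, note that only pairs $(p,q)$ with overlapping supports contribute, and apply Cauchy--Schwarz together with the arithmetic--geometric inequality to each such term, precisely as in the proof of Lemma~\ref{lm:smoothCR}. This produces $a(w,w)\le c_s\sum_p w_p^2\,a(\phi_p^{(j)},\phi_p^{(j)})$, where $c_s$ bounds the number of level-$j$ nodal basis functions whose support meets a fixed simplex, and hence depends only on the shape regularity of $\mathcal{T}_j$.

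For the upper bound $\calR_j(w,w)\lesssim h_j^{-2}\|w\|_{0,\kappa}^2$ I would first use that $\mathcal{T}_0$ resolves $\kappa$ and each $\mathcal{T}_j$ refines $\mathcal{T}_0$, so $\kappa$ is constant on every $\K\in\mathcal{T}_j$, say $\kappa|_\K=\kappa_\K$, and consequently $\|v\|_{0,\kappa}^2=\sum_{\K\in\mathcal{T}_j}\kappa_\K\|v\|_{0,\K}^2$ for the norm in~\eqref{def-norms00}. Then on a fixed $\K$, scaling to the reference simplex together with equivalence of norms on $\mathbb{P}^1(\K)$ yields the analogues of \eqref{eqn:equiv-discrete-norm}--\eqref{eq:basis-grad-l2}, namely $\|w\|_{0,\K}^2\simeq\sum_{p:\,\K\subset\operatorname{supp}\phi_p^{(j)}}w_p^2\|\phi_p^{(j)}\|_{0,\K}^2$ and $\|\nabla\phi_p^{(j)}\|_{0,\K}^2\simeq h_j^{-2}\|\phi_p^{(j)}\|_{0,\K}^2$, with constants depending only on shape regularity and quasi-uniformity. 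Multiplying the second by $\kappa_\K$ and summing over $\K$ gives $a(\phi_p^{(j)},\phi_p^{(j)})=\|\nabla\phi_p^{(j)}\|_{0,\kappa}^2\simeq h_j^{-2}\|\phi_p^{(j)}\|_{0,\kappa}^2$, and combining with the first relation and $\calR_j(w,w)\simeq\sum_p w_p^2 a(\phi_p^{(j)},\phi_p^{(j)})$ produces $\calR_j(w,w)\simeq h_j^{-2}\|w\|_{0,\kappa}^2$, in particular the claimed inequality.

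The only delicate point — and the step I would be most careful about — is that none of the hidden constants may depend on $\kappa$. This works precisely because $\kappa$ is piecewise constant with respect to $\mathcal{T}_0\subseteq\mathcal{T}_j$, hence constant on each simplex of $\mathcal{T}_j$, so $\kappa_\K$ factors out of every local trace/inverse/norm-equivalence estimate, leaving constants that see only the shape regularity and quasi-uniformity of $\mathcal{T}_j$; no interface or cross-element argument is needed.
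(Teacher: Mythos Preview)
Your proposal is correct; it is a faithful adaptation of the proof of Lemma~\ref{lm:smoothCR} from the Crouzeix--Raviart setting to the conforming nodal basis on $W_j$, and all constants are indeed $\kappa$-independent because $\kappa$ is elementwise constant on every $\mathcal{T}_j$. The paper itself does not give a proof of this lemma---it simply cites \cite[Lemma~4.2]{XuJZhuY-2008aa}---so your sketch is exactly the argument one would supply, and it matches the paper's own proof of the analogous CR result line for line.
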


We also need the following strengthened Cauchy Schwarz inequality. 
\begin{lemma}[{Strengthened Cauchy Schwarz, cf. \cite[Lemma 6.2]{XuJ-1992aa}}]
	\label{lm:scs}
	For $j =1, \cdots, J-1$ and $j< l\le J$,
    there exists a constant $\gamma\in (0,1)$ such that
	\begin{equation}
	\label{eqn:scs}
		a(w_{l}, w_{j}) \lesssim \gamma^{l-j} (h_{l}^{-1}\|w_{l}\|_{0,\kappa}) (h_{j}^{-1}\|w_{j}\|_{0,\kappa}), \quad \forall w_{l}\in W_{l}, w_{j}\in W_{j}.
	\end{equation}
\end{lemma}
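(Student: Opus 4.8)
The plan is to derive the estimate from the classical strengthened Cauchy--Schwarz inequality for nested conforming $\mathbb{P}^1$ spaces, applied subdomain by subdomain, exploiting that $\kappa$ is constant on each subdomain. First I would invoke the standing hypothesis that the coarsest triangulation $\cT_0$ resolves $\kappa$: since $\cT_0\subseteq\cT_j$ for every $j\ge 0$, the coefficient $\kappa$ is constant on every element of every $\cT_j$, and each subdomain $\Omega_i$ ($i=1,\dots,M$) is a union of elements of $\cT_0$, hence of $\cT_j$. Writing $\kappa_i:=\kappa|_{\Omega_i}$, this gives the decomposition
\begin{equation*}
a(w_l,w_j)=\sum_{i=1}^{M}\kappa_i\int_{\Omega_i}\nabla w_l\cdot\nabla w_j\,dx ,
\end{equation*}
so that it suffices to bound each local integral $\int_{\Omega_i}\nabla w_l\cdot\nabla w_j\,dx$ by constants that do not depend on $i$.

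On a fixed $\Omega_i$, the restrictions $W_l|_{\Omega_i}$ and $W_j|_{\Omega_i}$ are the full conforming $\mathbb{P}^1$ spaces over the nested quasi-uniform triangulations $\cT_j|_{\Omega_i}\subseteq\cT_l|_{\Omega_i}$, with no boundary condition imposed on $\partial\Omega_i$. Here I would apply the classical strengthened Cauchy--Schwarz inequality of~\cite{XuJ-1992aa} for such spaces: there is $\gamma\in(0,1)$, depending only on the shape regularity of the meshes, with
\begin{equation*}
\int_{\Omega_i}\nabla w_l\cdot\nabla w_j\,dx\,\lesssim\,\gamma^{\,l-j}\bigl(h_l^{-1}\|w_l\|_{0,\Omega_i}\bigr)\bigl(h_j^{-1}\|w_j\|_{0,\Omega_i}\bigr).
\end{equation*}
The mechanism here is standard: since $w_j$ is affine on each $T\in\cT_j|_{\Omega_i}$, integrating by parts over these coarse elements reduces the integral to a sum of interface terms involving only the trace of $w_l$ on the coarse skeleton; estimating those terms by a trace inequality, together with the fact that $w_l$ lives on a mesh $2^{\,l-j}$ times finer, produces the geometric factor, while the inverse inequalities on $\cT_l|_{\Omega_i}$ and on $\cT_j|_{\Omega_i}$ turn the $H^1$ quantities into the scaled $L^2$ quantities on the right-hand side (note that the energy-norm version of the inequality, with $a(w_l,w_l)^{1/2}$ in place of $h_l^{-1}\|w_l\|_{0,\Omega_i}$, is false for the full nested spaces, which is why the statement is phrased as above).

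To finish, I would multiply the local estimate by $\kappa_i$, sum over $i$, and apply the discrete Cauchy--Schwarz inequality in the index $i$:
\begin{equation*}
a(w_l,w_j)\lesssim\gamma^{\,l-j}\sum_{i=1}^{M}\bigl(h_l^{-1}\kappa_i^{1/2}\|w_l\|_{0,\Omega_i}\bigr)\bigl(h_j^{-1}\kappa_i^{1/2}\|w_j\|_{0,\Omega_i}\bigr)\lesssim\gamma^{\,l-j}(h_l^{-1}\|w_l\|_{0,\kappa})(h_j^{-1}\|w_j\|_{0,\kappa}),
\end{equation*}
which is the asserted inequality. The one delicate point is that the hidden constants and, above all, the decay rate $\gamma$ in the per-subdomain estimate must be \emph{uniform in $i$}, i.e.\ independent of the jumps in $\kappa$; this is precisely what the assumption that $\cT_0$ resolves $\kappa$ provides, since it lets one pull $\kappa$ out of each subdomain as a constant so that only shape regularity enters $\gamma$. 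A secondary, routine check is that the classical argument of~\cite{XuJ-1992aa} is genuinely local and hence remains valid on each polyhedral $\Omega_i$ without boundary conditions on $\partial\Omega_i$.
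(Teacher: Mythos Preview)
The paper does not give a proof of this lemma; it is quoted directly from \cite[Lemma~6.2]{XuJ-1992aa} as a known result and used without further argument. Your derivation is correct and is essentially how one adapts Xu's original argument to the weighted setting: the classical strengthened Cauchy--Schwarz inequality is proved element by element on the coarser mesh $\cT_j$, and since $\cT_0$ (hence every $\cT_j$) resolves $\kappa$, the coefficient is constant on each such element and factors out with no loss. Your subdomain-by-subdomain presentation is equivalent---one could equally well work element by element on $\cT_j$, which makes the locality of the argument and the $\kappa$-independence of $\gamma$ even more transparent---but the conclusion follows exactly as you argue.
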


The main result of this section is the following:
\begin{theorem}\label{teo2}
  Let $B_{\rm ML}$ be the multilevel preconditioner defined in \eqref{def-Bm}.  Then, the condition number $\mathcal{K}(B_{\rm ML}A_{0}^{CR})$ satisfies:
  	\begin{equation*}
		\mathcal{K}(B_{\rm ML}A_{0}^{CR}) \le C_{0} \mathcal{J}(\kappa)J^{2}\;,
	\end{equation*}
		where $J$ is the number of levels, and $\mathcal{J}(\kappa):= \max_{T\in \Th} \kappa_{T} /\min_{T\in \Th} \kappa_{T}$ is the jump of the coefficient. Moreover, there exists an integer $m_{0}$ depending only on the distribution of the coefficient $\kappa$ such that the $m_{0}$-th effective
  condition number $\mathcal{K}_{m_{0}}(B_{\rm ML} A_0^{CR})$ satisfies:
	\begin{equation*}
	 \mathcal{K}_{m_{0}}(B_{\rm ML} A_0^{CR}) \le C_{1} J^{2}\;,
	\end{equation*}
	where the constants $C_{0}, C_{1}>0$ are independent of the coefficients and mesh size. Hence, the convergence rate of the PCG algorithm can be bounded as
	\begin{equation}
\label{eqn:CG-rate}
\frac{|u-u_k |_{1,h,\kappa}}{|u-u_0 |_{1,h,\kappa}}\le 2(C_{0}\mathcal{J}(\kappa)J^{2}-1)^{m_0}
\left(\frac{\sqrt{C_{1}}J-1}{\sqrt{C_{1}}J+1}\right)^{k-{m_0}} \;.
\end{equation}
\end{theorem}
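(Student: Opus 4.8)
The plan is to follow the abstract multilevel framework and the identity \eqref{def-bilm}, reducing everything to two estimates: an upper bound $\lambda_{\max}(B_{\rm ML}A_0^{CR})\lesssim 1$ and a lower bound on the (effective) smallest eigenvalue. For the upper bound I would proceed exactly as in the two-level case: given $v\in V_h^{CR}$ and any decomposition $v=\sum_{j=0}^{J+1} w_j$, expand $\calA_0(v,v)$, use the Cauchy--Schwarz inequality together with the strengthened Cauchy--Schwarz estimate \eqref{eqn:scs} of Lemma \ref{lm:scs} (which handles the cross terms $a(w_l,w_j)$ on the conforming levels $0\le j<l\le J$) and the local smoothing estimate $\calA_0(w_j,w_j)\simeq a(w_j,w_j)\lesssim \mathcal{R}_j(w_j,w_j)$ from Lemma \ref{yyy} (and Lemma \ref{lm:smoothCR} for the finest level $j=J+1$). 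Summing a geometric series in $\gamma^{l-j}$ gives $\calA_0(v,v)\lesssim a(w_0,w_0)+\sum_{j=1}^{J+1}\mathcal{R}_j(w_j,w_j)$; taking the infimum over decompositions and using \eqref{def-bilm} yields $\calA_0(v,v)\lesssim (B_{\rm ML}^{-1}v,v)$, hence $\lambda_{\max}(B_{\rm ML}A_0^{CR})\lesssim 1$ uniformly in $\kappa$ and $h$.

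For the lower bound the strategy is to exhibit, for each $v\in V_h^{CR}$, a specific stable decomposition $v=\sum_{j=0}^{J+1}w_j$ whose ``energy'' $a(w_0,w_0)+\sum_{j=1}^{J+1}\mathcal{R}_j(w_j,w_j)$ is controlled by $\calA_0(v,v)$ up to the factor $\mathcal{J}(\kappa)J^2$ (or just $J^2$ on $\widetilde V_h^{CR}$). I would first peel off the nonconforming-to-conforming step using the operator $P_h^{\h}$ of Lemma \ref{lm:interpolation}: set $\chi:=P_h^{\widetilde h} v$ with, say, $\widetilde h = h$ (so $\chi\in W_J\subset\Vc$), put $w_{J+1}:=v-\chi\in V_h^{CR}$, and bound $\mathcal{R}_{J+1}(w_{J+1},w_{J+1})\lesssim h^{-2}\|v-\chi\|_{0,\kappa}^2$. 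With $\widetilde h=h$ the logarithm $|\log 2\widetilde h/h|$ is a constant, so this contributes only a uniform constant times $|v|_{1,h,\kappa}^2$ on $\widetilde V_h^{CR}$ (times $\mathcal{J}(\kappa)$ on all of $V_h^{CR}$ via Corollary \ref{cor:inter-jmp}). Then I would decompose the conforming function $\chi\in W_J$ by the standard BPX multilevel splitting $\chi=\sum_{j=0}^{J}w_j$ with $w_j=(Q_j^a-Q_{j-1}^a)\chi$ (the $a(\cdot,\cdot)$-orthogonal or $L^2$-type projections used in the BPX analysis), and invoke the known BPX stable-decomposition estimate for jump-coefficient problems from \cite{XuJZhuY-2008aa}: on $W_J$ one has $a(w_0,w_0)+\sum_{j=1}^{J}h_j^{-2}\|w_j\|_{0,\kappa}^2\lesssim \mathcal{J}(\kappa)J^2\,a(\chi,\chi)$, and $\lesssim J^2 a(\chi,\chi)$ when $\chi$ additionally has zero averages on floating subdomains. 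Since $a(\chi,\chi)=|P_h^{\widetilde h}v|_{1,\kappa}^2\lesssim |v|_{1,h,\kappa}^2\lesssim\calA_0(v,v)$ by the stability of $P_h^{\widetilde h}$ and coercivity \eqref{eq:coer}, combining the two parts yields $(B_{\rm ML}^{-1}v,v)\lesssim \mathcal{J}(\kappa)J^2\calA_0(v,v)$ for all $v\in V_h^{CR}$, and $(B_{\rm ML}^{-1}v,v)\lesssim J^2\calA_0(v,v)$ for $v\in\widetilde V_h^{CR}$.

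From these two bounds the theorem follows immediately: $\lambda_{\min}(B_{\rm ML}A_0^{CR})\gtrsim (\mathcal{J}(\kappa)J^2)^{-1}$ gives $\mathcal{K}(B_{\rm ML}A_0^{CR})\le C_0\mathcal{J}(\kappa)J^2$; and since $\dim\widetilde V_h^{CR}=\dim V_h^{CR}-m_0$ with $m_0=\#\mathcal I$, the minimax principle \cite[Theorem 8.1.2]{1996GolubG_Van-LoanC-aa} gives $\lambda_{m_0+1}(B_{\rm ML}A_0^{CR})\ge\min_{v\in\widetilde V_h^{CR}}\calA_0(v,v)/(B_{\rm ML}^{-1}v,v)\gtrsim J^{-2}$, hence $\mathcal{K}_{m_0}(B_{\rm ML}A_0^{CR})\le C_1 J^2$. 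Plugging these into the PCG estimate \eqref{eqn:CG} yields \eqref{eqn:CG-rate}. The main obstacle is the lower bound, specifically making the two splittings compatible: one must ensure that the nonconforming correction $w_{J+1}=v-P_h^{\widetilde h}v$ does not reintroduce a logarithmic or coefficient-dependent factor beyond what the subsequent BPX splitting of the conforming part already carries — this is why choosing $\widetilde h = h$ (rather than a genuinely coarser grid) is important — and that the zero-average constraint defining $\widetilde V_h^{CR}$ propagates correctly so that $P_h^{\widetilde h}v$ (or a mild modification of it) lands in the subspace of $W_J$ for which the robust (no $\mathcal{J}(\kappa)$) BPX estimate of \cite{XuJZhuY-2008aa} applies. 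Verifying that the $\widetilde V_h^{CR}$-based robust estimate survives this composition, rather than just the $\mathcal{J}(\kappa)$-dependent one, is the delicate point, and it is precisely the place where the non-nestedness of the CR spaces prevents a verbatim citation of \cite{XuJZhuY-2008aa,ZhuY-2008aa}.
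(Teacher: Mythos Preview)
Your upper-bound argument is essentially identical to the paper's: expand $\calA_0(v,v)$ over an arbitrary decomposition, apply the strengthened Cauchy--Schwarz inequality of Lemma~\ref{lm:scs} to the conforming cross terms, use Lemma~\ref{yyy} and Lemma~\ref{lm:smoothCR} for the smoothers, sum the geometric series, and take the infimum via~\eqref{def-bilm}.

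For the lower bound the paper takes a different and more direct route that sidesteps precisely the obstacle you flag. Instead of peeling off a single CR-to-conforming step at the finest level and then invoking the conforming BPX theory of~\cite{XuJZhuY-2008aa} on $\chi=P_h^{h}v$, the paper builds the multilevel decomposition \emph{directly from $V_h^{CR}$} by a telescoping sum of the operators in Lemma~\ref{lm:interpolation} at every level: with $P_j:=P_h^{h_j}:V_h^{CR}\to W_j$, $P_{J+1}:=I$, $P_{-1}:=0$, set $w_j:=(P_j-P_{j-1})v$. Then $\mathcal{R}_j(w_j,w_j)\lesssim h_j^{-2}\|w_j\|_{0,\kappa}^2\lesssim h_j^{-2}\bigl(\|v-P_jv\|_{0,\kappa}^2+\|v-P_{j-1}v\|_{0,\kappa}^2\bigr)$, and each term $h_j^{-2}\|v-P_jv\|_{0,\kappa}^2$ is bounded by $|\log(2h_j/h)|\,|v|_{1,h,\kappa}^2$ on $\widetilde V_h^{CR}$ (Corollary~\ref{cor:P}) or by $\mathcal{J}(\kappa)|\log(2h_j/h)|\,|v|_{1,h,\kappa}^2$ on all of $V_h^{CR}$ (Corollary~\ref{cor:inter-jmp}). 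Summing $\sum_{j=0}^{J}|\log(2h_j/h)|\simeq\sum_{j=0}^{J}(J-j)\simeq J^2$ yields the stated bounds.

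The point is that in the paper's argument the zero-average constraint never has to propagate anywhere: Corollaries~\ref{cor:inter-jmp} and~\ref{cor:P} are applied directly to $v$ at every level, so one never needs $P_h^{h}v$ to lie in the conforming analogue of $\widetilde V_h^{CR}$. Your concern about this propagation is well-founded---the operator $P_h^{\h}$ constructed in the appendix has no reason to preserve subdomain averages---and patching it would require modifying $P_h^{\h}$ or subtracting piecewise constants and then controlling the resulting error, which is exactly the coupling the paper's telescoping avoids. Your strategy is not wrong in spirit, but it outsources the multilevel step to~\cite{XuJZhuY-2008aa} and then must reconnect the CR and conforming worlds; the paper instead redoes the multilevel stable decomposition from scratch using only the $P_h^{h_j}$, which keeps the argument self-contained and eliminates the compatibility issue entirely.
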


\begin{proof}
We first give a bound on $\lambda_{\max}(B_{\textrm{ML}}A_0^{CR})$.
Let $v\in W_{J+1}=V^{CR}_h$ be arbitrary,
and let $\{w_j\}_{j=0}^{J+1}$ be any decomposition of $v$, namely
$v = \sum_{j=0}^{J+1} w_{j}$, with $w_{j}\in W_{j}$.
By the Cauchy-Schwarz inequality, we have
\begin{eqnarray*}
  \calA_{0} (v,v) &=& \calA_{0}\left(\sum_{j=0}^{J+1} w_{j},
    \sum_{j=0}^{J+1} w_{j}\right) \le 
3\left(a(w_{0}, w_{0})  
  + \sum_{i=1}^{J}\sum_{j=1}^{J} a\left( w_{i},  w_{j}\right) 
  + \calA_{0}(w_{J+1}, w_{J+1}) \right).
\end{eqnarray*}
By Lemma~\ref{yyy}, the strengthened Cauchy-Schwarz inequality (Lemma \ref{lm:scs}),  and the smoothing property of $\calR_{J+1}(\cdot, \cdot)$~\eqref{eqn:smoother}, we get:
\begin{eqnarray*}
  \calA_{0} (v,v) &\lesssim &  a(w_{0}, w_{0}) 
  + \sum_{i=1}^{J}\sum_{j=1}^{J} \gamma^{|i-j|} \left(h_{i}^{-1}\|w_{i}\|_{0,\kappa}\right) \left(h_{j}^{-1} \|w_{j}\|_{0,\kappa}\right)
  + \calR_{J+1}(w_{J+1}, w_{J+1})\\
&\lesssim & \left( a(w_{0}, w_{0}) +  \sum_{j=1}^{J} \mathcal{R}_{j} (w_{j}, w_{j}) + \calR(w_{J+1}, w_{J+1}) \right),
\end{eqnarray*}
where in the second inequality, we used the fact that the spectral 
radius of the matrix $(\gamma^{|i-j|})_{J\times J}$ is uniformly bounded by
$(1-\gamma)^{-1}$. 
Since the decomposition of $v$ was arbitrary, taking the infimum above over all
such decompositions and using  the identity~\eqref{def-bilm} then gives
\begin{equation*}
\calA_0(v,v) \lesssim (B_{\textrm{ML}}^{-1}v,v), \quad \forall v\in \Vcr,
\end{equation*}
which shows that $\lambda_{\max}(B_{\textrm{ML}}A_{0}^{CR}) \lesssim 1$.

 Similar to the proof of Theorem~\ref{teo1}, the estimates on the lower bound for $\lambda_{\min}$ and $\lambda_{m_{0}}$ rely on the stability of the decomposition.  For this purpose,  we make use of the interpolation operator and its properties introduced in \S\ref{sec:stab-decomp}.
To simplify the notation, we set 
$P_{j}:=P_{h}^{h_j}:
  \widetilde{V}_h^{CR}\longrightarrow W_j$, for $j=0,\ldots ,J$,
  and set $P_{J+1}=I$ and $P_{-1}=0$. 
Given any $v\in V_{h}^{CR}$, we define the decomposition of $v$ as
\begin{equation*}
	v =P_{J+1}v= \sum_{j=0}^{J+1} (P_{j} - P_{j-1}) v=\sum_{j=0}^{J+1} w_{j}, 
\quad\mbox{where} 
\quad w_j=(P_{j} - P_{j-1}) v.
\end{equation*}
Clearly, $w_j\in W_j$ for $j =1, \cdots, (J+1)$ and $w_{0} = P_{0}
v\in W_0$.  Triangle inequality and the smoothing properties of $\calR_{j} \;(j=1, \cdots, J+1)$ from Lemma~\ref{yyy}  and Lemma~\ref{lm:smoothCR}, give 
\begin{eqnarray}
	a(w_0,w_0) + \sum_{j=1}^{J+1} \mathcal{R}_{j}(w_{j}, w_{j}) 
&\lesssim&  |P_{0} v|_{1, \kappa}^{2} + \sum_{j=1} ^{J+1}h_{j}^{-2} \|(P_{j} - P_{j-1})v\|^{2}_{0,\kappa}\nonumber\\
	&\lesssim & | P_{0} v|_{1,\kappa}^{2} + \sum_{j=0}^{J} h_{j}^{-2}\|v - P_{j}v\|_{0,\kappa}^{2}. \label{eqn:aaa} 
\end{eqnarray}
Using in \eqref{eqn:aaa},  the approximation property \eqref{eqn:int-jmp-app}  of $P_{j}\; (j=0, \cdots, J)$ and the stability property \eqref{eqn:int-jmp-stab} of $P_{0}$ given in Corollary~\ref{cor:inter-jmp}, we obtain
\begin{eqnarray*}
	(B_{\rm ML}^{-1} v, v) &\le& a(w_0,w_0) + \sum_{j=1}^{J+1} \mathcal{R}_{j}(w_{j}, w_{j}) \\
	&\lesssim&  \mathcal{J}(\kappa)\left(\sum_{j=0}^{J}   |\log{h_{j}}|\right) |v|_{1, h, \kappa}^{2}  \lesssim\mathcal{J}(\kappa) J^{2} \calA_{0}(v, v),  \qquad \forall v\in \Vcr.
\end{eqnarray*}
This gives the estimate on minimal eigenvalue of $B_{\rm ML} A_{0}^{CR}$ as $$ \lambda_{\min}(B_{\rm ML}A_{0}^{CR}) \gtrsim 1/\left(\mathcal{J}(\kappa) J^{2}\right). $$
Similarly, if we use  Corollary~\ref{cor:P} in \eqref{eqn:aaa}, we obtain
$$
	(B_{\rm ML}^{-1} v, v) \lesssim J^{2} \calA_{0}(v, v),  \qquad \forall v\in \widetilde{V}_{h}^{CR}.
$$
Therefore, $\lambda_{m_{0}+1}(B_{\textrm{ML}}^{-1} A_0^{CR}) \gtrsim 1/J^{2}$ by the \emph{minimax principle} and the result follows. 
\end{proof}
\begin{remark}
  Similar results hold also for the multiplicative multilevel methods such as the $V$-cycle.
   These results can be derived from estimates comparing multiplicative and additive preconditioners  given in~\cite[Theorem~4]{GriebelM_OswaldP-1995aa}
or~\cite[Theorem~4.2]{2008ChoD_XuJZikatanovL-aa}. We refer to \cite{Zhu.Y2011} for a detailed analysis and numerical justification. 
\end{remark}


\section{Numerical Experiments}
\label{sec:numerical}
We consider the model problem \eqref{eqn:model} in the square $\Omega
= [-1, 1]^{2} $ with coefficients:
$$
	\kappa(x) = \left\{\begin{array}{ll}
		1.0, &\forall x\in [-0.5, 0]^{2} \cup [0, 0.5]^{2},\\
		\epsilon, &\text{elsewhere}.
		\end{array}\right.
$$

In all of the following experiments, $\epsilon$ varies from $10^{-5}$ up
to $10^{5}$, covering a wide range of variations of the coefficients. 
The set of experiments is carried out on a family of structured triangulations; we consider uniform refinement with a structured
initial triangulation on level $0$ with $32$ elements and mesh size
$h=2^{-1}$. 
This initial mesh resolves the jump in the coefficients.
Each refined triangulation is then obtained by subdividing each 
element of the previous level into four congruent elements.
The number of degrees of freedom $N_{\ell}$ in the DG
discretizations on each level satisfies $N_{\ell} = 4^{\ell} N_{0}$
for $\ell = 0, 1, 2, 3, 4$ with $N_{0} = 96$. 
We  consider the IP($\beta$)-0 method \eqref{ipA0} with penalty parameter $\alpha= 8$. 

We use  the basis \eqref{basis-CR}-\eqref{basis-z} for the computations. To solve the resulting linear systems  we use Algorithm \ref{algo0}. Due to the block
structure\eqref{Acal0} of $\mathbb{A}_{0}$ (matrix representation of
$A_{0}$ in the basis \eqref{basis-CR}-\eqref{basis-z}) we only need to
numerically verify the effectiveness of the solvers for each block;
$\mathbb{A}_{0}^{vv}$ and $\mathbb{A}_{0}^{zz}$. Recall that for any
choice of $\theta =0, \pm 1$, the block $\mathbb{A}_{0}^{vv}$ is the
same (since it is the stiffness matrix of the Crouzeix-Raviart
discretization \eqref{prob:cr}), while the block $\mathbb{A}_{0}^{zz}$
is different for different values of $\theta$, but it is always an SPD matrix.
To solve each of these smaller systems we use the  preconditioned CG,  for which we have set the tolerance to TOL=$10^{-7}$ for the stopping criteria based on the residual; namely, if $r_0$ is the initial residual and $r^{k}$ is
the residual at iteration $k$, the PCG iteration process is terminated at iteration $k$ if
$\|r^{k}\|_{\ell_2}/\|r^{0}\|_{\ell_2} <10^{-7}$.
The experiments were carried out on an IMAC (OS X)
with 2.93 GHz Intel Core i7, and 8 GB 1333 MHz DDR3.

The systems corresponding to $\mathbb{A}_{0}^{zz}$ are solved by a PCG algorithm using its diagonal $\mathbb{D}_{z}$ as a preconditioner.  The estimated condition numbers of $\mathbb{D}_{z}^{-1}\mathbb{A}_0^{zz}$ for  SIPG($\beta$)-0   are reported in Table~\ref{tab:SIPG-Z}.
\begin{table}[ht]
	\centering
\small
\begin{tabular}{c|c||c|c|c|c|c|c|c}\hline\hline
 &  & \multicolumn{5}{c}{$\epsilon$}\\ \cline{3-9}
levels & h &$10^{-5}$ & $10^{-3}$ & $10^{-1}$ & $1$ & $10^{1}$     &$10^{3}$    & $10^{5}$
\\ \hline\hline
0 & $2^{-1}$&  1.73 (14)&  1.73 (12)&  1.73  &  1.73 (9)  &  1.72 (10)  &  1.73 (12) &1.73 (13)
\\ \hline
1 & $2^{-2}$&  1.72 (15)&   1.72 (13)&   1.72&  1.72 (10)&  1.72 (10) &  1.72 (12)&  1.72 (14)
\\ \hline
2 & $2^{-3}$&  1.72 (15)&   1.72 (13)&   1.72&  1.71 (10)&   1.7 (10) &  1.71 (12) &  1.72 (15)
\\ \hline
3 & $2^{-4}$&  1.72 (15)&    1.72 (12)&  1.71&  1.71 (10)&  1.69 (10) &  1.69 (12) &  1.69 (16)
\\ \hline\hline
 \end{tabular}

%
%

\caption{Estimated condition numbers $\mathcal{K}(\mathbb{D}_{z}^{-1}\mathbb{A}_0^{zz})$ (number of PCG iterations) for the block $\mathbb{A}_{0}^{zz}$ in SIPG($\beta$)-0 discretization. \label{tab:SIPG-Z}}
\end{table}
Observe that the condition numbers of $\mathbb{D}_{z}^{-1}\mathbb{A}_0^{zz}$ are uniformly bounded and close to 1, which confirms  the result established in Lemma \ref{le:diag0}; i.e., that $\mathbb{A}_0^{zz}$ is spectrally equivalent to its diagonal. Similar results, although not reported here, were found for the  NIPG($\beta$)-0 and IIPG($\beta$)-0 methods.  The system $\mathbb{A}_{0}^{vv}$ arising from the restriction of
$\calA_0(\cdot,\cdot)$ to the Crouzeix-Raviart space is solved by a
PCG algorithm with the two-level preconditioners defined in
\eqref{eq:2Loperator}, for which we use 5 symmetric Gauss-Seidel iterations as smoother.  

Figure~\ref{fig:SIPGe-6lvl5} shows the spectrum of the preconditioned system for $\epsilon=10^{-5}$ and the mesh size $h=2^{-5}$. In this example, we have taken $\h = h$, so $\calT_{\h} = \Th$. 
\begin{figure}[htbp]
    \begin{center}
         \includegraphics[width=0.55\textwidth]{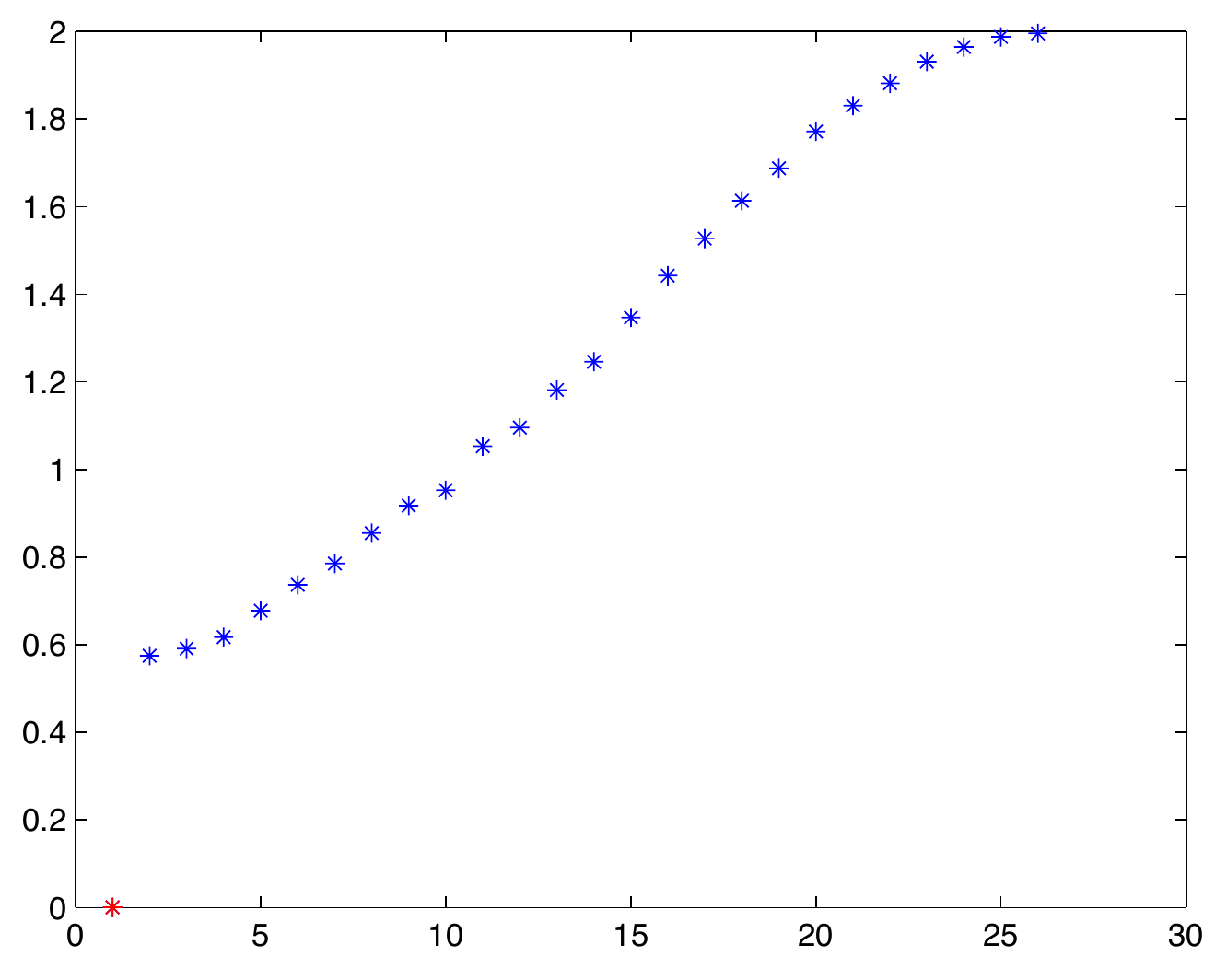}
    \end{center}
    \caption{Eigenvalue distribution of $\mathbb{B}\mathbb{A}_0^{vv}$ for $\epsilon = 10^{-5}$ and $h=2^{-5}$. }    \label{fig:SIPGe-6lvl5}
 \end{figure}  
Note that there is only one (very small) eigenvalue close to zero (which may be related to the fact that there are only 2 different values for the coefficients). In Table
\ref{tab:2L0} we report the estimated condition number
$\mathcal{K}(\mathbb{B}\mathbb{A}_{0}^{vv})$ and the effective
condition number (denoted by
$\mathcal{K}_1(\mathbb{B}\mathbb{A}_{0}^{vv})$). Observe that the
estimated condition number
$\mathcal{K}(\mathbb{B}\mathbb{A}_{0}^{vv})$ deteriorates with respect
to the magnitude of the jump in the coefficient. In contrast, the effective condition number
$\mathcal{K}_{1}(\mathbb{B}\mathbb{A}_{0}^{vv})$ is uniformly bounded with
respect to both the mesh size and the jump of coefficient, as
predicted by Theorem~\ref{teo1}. 
\begin{table}[ht]

\begin{tabular}{c|c||c|c|c|c|c}
\hline\hline
\multirow{2}{*}{$\epsilon$}
 & levels &  0         & 1        & 2        & 3        & 4 \\\cline{2-7}
&  $h$   & $2^{-1}$  & $2^{-2}$   & $2^{-3}$  & $2^{-4}$  & $2^{-5}$\\
\hline\hline
\multirow{2}{*}{$10^{-5}$}
 & $\mathcal{K}(\mathbb{B}\mathbb{A}_{0}^{vv})$ & 3e+4 (12)& 3.31e+4 (19)& 2.77e+4 (22)& 2.37e+4 (21)& 2.08e+4 (21)\\
 & $\mathcal{K}_{1}(\mathbb{B}\mathbb{A}_{0}^{vv})$ &  4.52 &  3.37 &  2.95 &  2.78 &  2.71 \\\hline
\multirow{2}{*}{$10^{-3}$}
 & $\mathcal{K}(\mathbb{B}\mathbb{A}_{0}^{vv})$ &   301 (11)&   333 (15)&   280 (18)&   240 (18)&   211 (18)\\
 & $\mathcal{K}_{1}(\mathbb{B}\mathbb{A}_{0}^{vv})$ &  4.48 &  3.36 &  2.95 &  2.77 &  2.71 \\\hline
\multirow{2}{*}{$10^{-1}$}
 & $\mathcal{K}(\mathbb{B}\mathbb{A}_{0}^{vv})$ &  4.42 (10)&  5.22 (13)&  4.91 (14)&   4.7 (14)&  4.59 (14)\\
 & $\mathcal{K}_{1}(\mathbb{B}\mathbb{A}_{0}^{vv})$ &  2.97 &  2.89 &  2.69 &   2.6 &  2.57 \\\hline
\multirow{2}{*}{$1$}
 & $\mathcal{K}(\mathbb{B}\mathbb{A}_{0}^{vv})$ &  2.16 (8)&  2.25 (11)&  2.29 (12)&   2.3 (12)&  2.33 (12)\\
 & $\mathcal{K}_{1}(\mathbb{B}\mathbb{A}_{0}^{vv})$ &  2.06 &  2.16 &  2.21 &  2.19 &  2.18 \\\hline
\multirow{2}{*}{$10^{1}$}
 & $\mathcal{K}(\mathbb{B}\mathbb{A}_{0}^{vv})$ &  2.33 (9)&  3.16 (12)&  3.58 (13)&   3.8 (14)&  3.95 (14)\\
 & $\mathcal{K}_{1}(\mathbb{B}\mathbb{A}_{0}^{vv})$ &   2.3 &  2.63 &  2.66 &  2.62 &  2.61 \\\hline
\multirow{2}{*}{$10^{3}$}
 & $\mathcal{K}(\mathbb{B}\mathbb{A}_{0}^{vv})$ &  2.54 (9)&  4.12 (13)&  5.37 (14)&  6.56 (15)&  7.79 (16)\\
 & $\mathcal{K}_{1}(\mathbb{B}\mathbb{A}_{0}^{vv})$ &   2.4 &  2.82 &  2.85 &   2.8 &  2.78 \\\hline
\multirow{2}{*}{$10^{5}$}
 & $\mathcal{K}(\mathbb{B}\mathbb{A}_{0}^{vv})$ &  2.55 (9)&  4.13 (13)&  5.41 (15)&  6.62 (16)&  7.89 (17)\\
 & $\mathcal{K}_{1}(\mathbb{B}\mathbb{A}_{0}^{vv})$ &   2.4 &  2.83 &  2.85 &   2.8 &  2.78 \\\hline
\hline
\end{tabular}

\caption{Two level preconditioner for $\mathbb{A}_{0}^{vv}$ on $V_{h}^{CR}$ with $\h = h$. \label{tab:2L0}}
\end{table}

For comparison, we also present the results obtained with different choices of coarse grid $\h = 2h, \; 4h$, reported in Tables \ref{tab:2L1} -\ref{tab:2L2}.  As we can see from these two tables, the effective condition number is uniformly bounded with respect to the coefficient and mesh size. However, comparing to the results in Table~\ref{tab:2L0}, it seems that the effective condition numbers get larger when we use a  coarser grid. These observations coincide with the conclusion in Theorem~\ref{teo1}.
\begin{table}[!ht]

\begin{tabular}{c|c||c|c|c|c|c}
\hline\hline
\multirow{2}{*}{$\epsilon$}
 & levels &  0         & 1        & 2        & 3        & 4 \\\cline{2-7}
&  $h$   & $2^{-1}$  & $2^{-2}$   & $2^{-3}$  & $2^{-4}$  & $2^{-5}$\\
\hline\hline
\multirow{2}{*}{$10^{-5}$}
 & $\mathcal{K}(\mathbb{B}\mathbb{A}_{0}^{vv})$ & X& 4.92e+4 (18)& 4.28e+4 (24)& 3.66e+4 (26)& 3.21e+4 (27)\\
 & $\mathcal{K}_{1}(\mathbb{B}\mathbb{A}_{0}^{vv})$ & X&  4.27 &  3.61 &  3.38 &  3.33 \\\hline
\multirow{2}{*}{$10^{-3}$}
 & $\mathcal{K}(\mathbb{B}\mathbb{A}_{0}^{vv})$ & X&   494 (16)&   431 (21)&   370 (21)&   325 (21)\\
 & $\mathcal{K}_{1}(\mathbb{B}\mathbb{A}_{0}^{vv})$ & X&  4.26 &  3.61 &  3.38 &  3.34 \\\hline
\multirow{2}{*}{$10^{-1}$}
 & $\mathcal{K}(\mathbb{B}\mathbb{A}_{0}^{vv})$ & X&  7.14 (14)&  6.69 (16)&  6.35 (16)&  6.19 (16)\\
 & $\mathcal{K}_{1}(\mathbb{B}\mathbb{A}_{0}^{vv})$ & X&  3.46 &  3.27 &   3.2 &  3.19 \\\hline
\multirow{2}{*}{$1$}
 & $\mathcal{K}(\mathbb{B}\mathbb{A}_{0}^{vv})$ & X&  2.63 (11)&  2.75 (13)&  2.91 (14)&  2.97 (14)\\
 & $\mathcal{K}_{1}(\mathbb{B}\mathbb{A}_{0}^{vv})$ & X&  2.32 &  2.61 &  2.63 &  2.61 \\\hline
\multirow{2}{*}{$10^{1}$}
 & $\mathcal{K}(\mathbb{B}\mathbb{A}_{0}^{vv})$ & X&  3.74 (13)&   4.3 (15)&  4.48 (16)&  4.67 (16)\\
 & $\mathcal{K}_{1}(\mathbb{B}\mathbb{A}_{0}^{vv})$ & X&  3.33 &  3.38 &  3.32 &  3.29 \\\hline
\multirow{2}{*}{$10^{3}$}
 & $\mathcal{K}(\mathbb{B}\mathbb{A}_{0}^{vv})$ & X&  4.93 (14)&  6.59 (16)&  8.02 (18)&  9.55 (18)\\
 & $\mathcal{K}_{1}(\mathbb{B}\mathbb{A}_{0}^{vv})$ & X&  3.64 &  3.65 &  3.56 &  3.49 \\\hline
\multirow{2}{*}{$10^{5}$}
 & $\mathcal{K}(\mathbb{B}\mathbb{A}_{0}^{vv})$ & X&  4.95 (14)&  6.63 (16)&  8.02 (18)&  9.66 (19)\\
 & $\mathcal{K}_{1}(\mathbb{B}\mathbb{A}_{0}^{vv})$ & X&  3.65 &  3.65 &  3.53 &  3.49 \\\hline
\hline
\end{tabular}

	\caption{Two level preconditioner for $\mathbb{A}_{0}^{vv}$ on
          $V_{h}^{CR}$ with $\h = 2h$.\label{tab:2L1}}

\end{table}
\begin{table}[!ht]

\begin{tabular}{c|c||c|c|c|c|c}
\hline\hline
\multirow{2}{*}{$\epsilon$}
 & levels &  0         & 1        & 2        & 3        & 4 \\\cline{2-7}
&  $h$   & $2^{-1}$  & $2^{-2}$   & $2^{-3}$  & $2^{-4}$  & $2^{-5}$\\
\hline\hline
\multirow{2}{*}{$10^{-5}$}
 & $\mathcal{K}(\mathbb{B}\mathbb{A}_{0}^{vv})$ & X& X& 7.89e+4 (31)& 7.29e+4 (34)& 6.41e+4 (35)\\
 & $\mathcal{K}_{1}(\mathbb{B}\mathbb{A}_{0}^{vv})$ & X& X&  6.58 &  5.99 &  5.97 \\\hline
\multirow{2}{*}{$10^{-3}$}
 & $\mathcal{K}(\mathbb{B}\mathbb{A}_{0}^{vv})$ & X& X&   793 (25)&   733 (28)&   646 (29)\\
 & $\mathcal{K}_{1}(\mathbb{B}\mathbb{A}_{0}^{vv})$ & X& X&  6.57 &  5.99 &  5.97 \\\hline
\multirow{2}{*}{$10^{-1}$}
 & $\mathcal{K}(\mathbb{B}\mathbb{A}_{0}^{vv})$ & X& X&  12.2 (20)&  11.6 (22)&  11.4 (22)\\
 & $\mathcal{K}_{1}(\mathbb{B}\mathbb{A}_{0}^{vv})$ & X& X&  5.58 &  5.69 &  5.76 \\\hline
\multirow{2}{*}{$1$}
 & $\mathcal{K}(\mathbb{B}\mathbb{A}_{0}^{vv})$ & X& X&  4.73 (17)&  5.22 (19)&  5.32 (19)\\
 & $\mathcal{K}_{1}(\mathbb{B}\mathbb{A}_{0}^{vv})$ & X& X&  3.99 &  4.75 &   4.8 \\\hline
\multirow{2}{*}{$10^{1}$}
 & $\mathcal{K}(\mathbb{B}\mathbb{A}_{0}^{vv})$ & X& X&  7.55 (19)&  6.84 (21)&  6.97 (22)\\
 & $\mathcal{K}_{1}(\mathbb{B}\mathbb{A}_{0}^{vv})$ & X& X&  6.34 &  5.63 &  5.95 \\\hline
\multirow{2}{*}{$10^{3}$}
 & $\mathcal{K}(\mathbb{B}\mathbb{A}_{0}^{vv})$ & X& X&  11.2 (20)&  12.2 (23)&  14.6 (25)\\
 & $\mathcal{K}_{1}(\mathbb{B}\mathbb{A}_{0}^{vv})$ & X& X&  6.99 &  6.11 &  6.39 \\\hline
\multirow{2}{*}{$10^{5}$}
 & $\mathcal{K}(\mathbb{B}\mathbb{A}_{0}^{vv})$ & X& X&  11.3 (20)&  12.3 (23)&  14.9 (26)\\
 & $\mathcal{K}_{1}(\mathbb{B}\mathbb{A}_{0}^{vv})$ & X& X&     7 &  6.12 &   6.4 \\\hline
\hline
\end{tabular}

	\caption{Two level preconditioner for $\mathbb{A}_{0}^{vv}$ on
          $V_{h}^{CR}$ with $\h = 4h$.\label{tab:2L2}}
\end{table}

We now present the results corresponding to the multilevel preconditioners as defined in \eqref{def-Bm}. In Table \ref{tab:bpx16} we report the estimated condition number $\mathcal{K}(\mathbb{B}\mathbb{A}_{0}^{vv})$ and the effective condition number (denoted by $\mathcal{K}_1(\mathbb{B}\mathbb{A}_{0}^{vv})$) for the BPX.
Also for the BPX, we use 5 symmetric Gauss-Siedel iterations as a smoother.
Observe that the estimated condition number
$\mathcal{K}(\mathbb{B}\mathbb{A}_{0}^{vv})$ deteriorates with respect
to the magnitude of the jump in coefficient. On the other hand, the effective condition number
$\mathcal{K}_{1}(\mathbb{B}\mathbb{A}_{0}^{vv})$ is nearly uniformly bounded with
respect to both the mesh size and the jump of the coefficient, as
predicted by Theorem \ref{teo2}.
\begin{table}[!ht]
\centering
\small
\begin{tabular}{c|c||c|c|c|c|c}
\hline\hline
\multirow{2}{*}{$\epsilon$}
 & levels &  0         & 1        & 2        & 3        & 4 \\
\cline{2-7}
&  $h$   & $2^{-1}$  & $2^{-2}$   & $2^{-3}$  & $2^{-4}$  & $2^{-5}$\\
\hline\hline
\multirow{2}{*}{$10^{-5}$}
 & $\mathcal{K}(\mathbb{B}\mathbb{A}_{0}^{vv})$ & 3e+4 (12)& 5.03e+4 (27)& 6.77e+4 (33)& 8.64e+4 (37)& 1.06e+5 (42)\\
 & $\mathcal{K}_{1}(\mathbb{B}\mathbb{A}_{0}^{vv})$ &  4.52 &  5.69 &  6.81 &   7.9 &  9.03 \\\hline
\multirow{2}{*}{$10^{-3}$}
 & $\mathcal{K}(\mathbb{B}\mathbb{A}_{0}^{vv})$ &   301 (11)&   506 (22)&   680 (27)&   868 (31)& 1.06e+03 (35)\\
 & $\mathcal{K}_{1}(\mathbb{B}\mathbb{A}_{0}^{vv})$ &  4.49 &  5.65 &  6.78 &  7.86 &  8.98 \\\hline
\multirow{2}{*}{$10^{-1}$}
 & $\mathcal{K}(\mathbb{B}\mathbb{A}_{0}^{vv})$ &  4.42 (10)&   7.5 (16)&  9.92 (20)&  12.5 (24)&  15.1 (26)\\
 & $\mathcal{K}_{1}(\mathbb{B}\mathbb{A}_{0}^{vv})$ &  2.97 &  4.22 &  5.28 &   6.3 &  7.41 \\\hline
\multirow{2}{*}{$1$}
 & $\mathcal{K}(\mathbb{B}\mathbb{A}_{0}^{vv})$ &  2.16 (8)&  3.32 (13)&  4.45 (17)&  5.61 (20)&  6.67 (22)\\
 & $\mathcal{K}_{1}(\mathbb{B}\mathbb{A}_{0}^{vv})$ &  2.07 &  3.17 &  4.25 &  5.23 &  6.24 \\\hline
\multirow{2}{*}{$10^{1}$}
 & $\mathcal{K}(\mathbb{B}\mathbb{A}_{0}^{vv})$ &  2.33 (9)&  4.58 (14)&  6.69 (19)&  8.75 (22)&    11 (26)\\
 & $\mathcal{K}_{1}(\mathbb{B}\mathbb{A}_{0}^{vv})$ &   2.3 &  3.84 &  5.06 &  6.19 &  7.31 \\\hline
\multirow{2}{*}{$10^{3}$}
 & $\mathcal{K}(\mathbb{B}\mathbb{A}_{0}^{vv})$ &  2.54 (9)&  5.92 (16)&  10.1 (21)&  15.6 (25)&    23 (29)\\
 & $\mathcal{K}_{1}(\mathbb{B}\mathbb{A}_{0}^{vv})$ &   2.4 &  4.11 &  5.42 &  6.62 &  7.81 \\\hline
\multirow{2}{*}{$10^{5}$}
 & $\mathcal{K}(\mathbb{B}\mathbb{A}_{0}^{vv})$ &  2.55 (9)&  5.94 (16)&  10.2 (21)&  15.7 (25)&  23.3 (29)\\
 & $\mathcal{K}_{1}(\mathbb{B}\mathbb{A}_{0}^{vv})$ &   2.4 &  4.11 &  5.43 &  6.62 &  7.81 \\\hline
\hline
\end{tabular}

	\caption{PCG with BPX (additive) preconditioner for solving on
          $V_{h}^{CR}$.\label{tab:bpx16}}
\end{table}
Moreover, we also observe that the effective condition numbers grow linearly with respect to the number of levels, which is better than the quadratic growth in Theorem~\ref{teo2}. This issue will be further investigated in the future.


\section{Solvers for IP($\beta$)-1
  Methods}\label{subsect:preconditioners-for-type-0 and-1}
  
We now briefly discuss how the  preconditioners developed here for the  IP($\beta$)-0  can be used or extended for preconditioning the IP($\beta$)-1 methods \eqref{ipA}. We follow~\cite{Ayuso-de-DiosBZikatanovL-2009aa}. 

\subsection{Solvers for the SIPG($\beta$)-1 method\label{subsec:solverSIPG}}
From the spectral equivalence
given in Lemma \ref{lm:equivA:A0}, it follows that any of the
preconditioners designed for $\mathcal{A}_0(\cdot,\cdot)$ result in
an efficient solver for $\mathcal{A}(\cdot,\cdot)$. Motivated by the block diagonal form of $\mathbb{A}_0$ (cf. \eqref{Acal0}), we use the decomposition \eqref{splitting0} and define the following
block-Jacobi preconditioner:
\begin{equation}\label{eqn:B1DG}
\mbox{Block-Jacobi: $B_1^{DG}:= [R^{z}]^{-1} + \widetilde{B}Q^{CR}\;,$}
\end{equation}
where $R^{z}$ denotes the operator corresponding to the diagonal of $\calA(\cdot,\cdot)$ restricted to $\calZ_{\beta}$ and  $\widetilde{B}$ refers to the corresponding
multilevel preconditioner for the symmetric SIPG($\beta$)-1 method (i.e.,
including the jump-jump term). The next result is a simple consequence of Theorem~\ref{teo2} (focusing only on the asymptotic result) together with
Lemma \ref{lm:equivA:A0}. 
 \begin{theorem}\label{teo3}
 Let $B^{DG}$ be the preconditioner defined through \eqref{eqn:B1DG}. Let $m_0$ be the number of floating subdomains. Then, the following estimate holds for the effective condition $\mathcal{K}_{m_{0}}(B^{DG}A)$:
	\begin{equation*}
		\mathcal{K}_{m_{0}}(B^{DG}A) \le C J^{2}\;.
	\end{equation*}
	The constant $C>0$ above is independent of
        the variation in the coefficients and mesh size. \end{theorem}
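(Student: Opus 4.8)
\emph{Proof plan.} The idea is to view $B^{DG}$ as the additive (block-Jacobi) preconditioner attached to the \emph{direct} sum $V_{h}^{DG}=\calZ_{\beta}\oplus\Vcr$ of Proposition~\ref{teo0}, and to reduce the whole analysis to facts already established for the IP($\beta$)-0 method. Since the splitting is a direct sum, every $u\in V_{h}^{DG}$ has a unique representation $u=z+v$ with $z\in\calZ_{\beta}$ and $v\in\Vcr$, and the abstract additive-preconditioner identity (the analogue of~\eqref{eq:additive-inverse}--\eqref{def-bilm}) degenerates to
\[
\bigl((B^{DG})^{-1}u,\,u\bigr)=\calR^{z}(z,z)+\bigl(\widetilde{B}^{-1}v,\,v\bigr),\qquad u=z+v ,
\]
where $\calR^{z}(\cdot,\cdot):=(R^{z}\cdot,\cdot)$ is the diagonal form on $\calZ_{\beta}$ and $(\widetilde{B}^{-1}v,v)$ is given by the BPX identity~\eqref{def-bilm} on $\Vcr$. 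It then suffices to prove $\calA(u,u)\lesssim\calR^{z}(z,z)+(\widetilde{B}^{-1}v,v)$ for every $u$ (which gives $\lambda_{\max}(B^{DG}A)\lesssim1$), together with the reverse inequality up to a factor $J^{2}$ on a subspace of codimension $m_{0}$.

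The one step that is not a routine computation is the $\kappa$- and $h$-uniform equivalence
\[
\calA(u,u)\;\simeq\;\calA(z,z)+\calA(v,v),\qquad u=z+v,\ \ z\in\calZ_{\beta},\ \ v\in\Vcr ,
\]
which decouples the two blocks \emph{despite} the fact that, unlike for IP($\beta$)-0, the decomposition~\eqref{splitting0} is not $\calA$-orthogonal for SIPG($\beta$)-1 (only the projected jumps are orthogonal). I would obtain it by passing through $\calA_{0}$: by Lemma~\ref{le:orto} the splitting is \emph{exactly} $\calA_{0}$-orthogonal in the symmetric case, so $\calA_{0}(u,u)=\calA_{0}(z,z)+\calA_{0}(v,v)$, and Lemma~\ref{lm:equivA:A0} applied separately to $z$, to $v$ and to $u$ gives
\[
\calA(z,z)+\calA(v,v)\le c_{0}\bigl(\calA_{0}(z,z)+\calA_{0}(v,v)\bigr)=c_{0}\,\calA_{0}(u,u)\le c_{0}\,\calA(u,u),
\]
and, in the other direction, $\calA(u,u)\le c_{0}\,\calA_{0}(u,u)=c_{0}\bigl(\calA_{0}(z,z)+\calA_{0}(v,v)\bigr)\le c_{0}\bigl(\calA(z,z)+\calA(v,v)\bigr)$, with $c_{0}=c_{0}(\alpha)$ the constant of Lemma~\ref{lm:equivA:A0}. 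This is the main obstacle; the rest is assembly of earlier estimates.

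The two blocks are then treated separately. On $\calZ_{\beta}$: combining Lemma~\ref{le:diag0} with Lemma~\ref{lm:equivA:A0} shows that the restriction of $\calA$ to $\calZ_{\beta}$ is spectrally equivalent to its diagonal, because $\calA(\psi^{z}_{e},\psi^{z}_{e})\simeq\calA_{0}(\psi^{z}_{e},\psi^{z}_{e})\simeq\|\psi^{z}_{e}\|_{\ast}^{2}$, whence $\calR^{z}(z,z)=\sum_{e}z_{e}^{2}\calA(\psi^{z}_{e},\psi^{z}_{e})\simeq\|z\|_{\ast}^{2}\simeq\calA(z,z)$, with $\kappa$- and $h$-independent constants. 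On $\Vcr$: since $\calA|_{\Vcr}\simeq\calA_{0}|_{\Vcr}=A_{0}^{CR}$ uniformly (Lemma~\ref{lm:equivA:A0}), the proof of Theorem~\ref{teo2} carries over to $\widetilde{B}$ --- the conforming-level smoothing bounds (Lemma~\ref{yyy}), the strengthened Cauchy--Schwarz inequality (Lemma~\ref{lm:scs}) and the stable decomposition of~\S\ref{sec:stab-decomp} are insensitive to which of $\calA,\calA_{0}$ is used, and on the finest level one simply replaces $\calA_{0}(v,v)\lesssim\calR(v,v)$ by $\calA(v,v)\le c_{0}\calA_{0}(v,v)\lesssim\calR(v,v)$. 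This yields $\calA(v,v)\lesssim(\widetilde{B}^{-1}v,v)$ for all $v\in\Vcr$, and $(\widetilde{B}^{-1}v,v)\lesssim J^{2}\calA(v,v)$ for all $v\in\widetilde{V}_{h}^{CR}$.

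Finally one assembles. From the above, $\calA(u,u)\lesssim\calA(z,z)+\calA(v,v)\lesssim\calR^{z}(z,z)+(\widetilde{B}^{-1}v,v)=((B^{DG})^{-1}u,u)$, so $\lambda_{\max}(B^{DG}A)\lesssim1$. For the lower bound set $\widetilde{V}_{h}^{DG}:=\calZ_{\beta}\oplus\widetilde{V}_{h}^{CR}$, which has codimension $m_{0}=\#\mathcal{I}$ in $V_{h}^{DG}$ ($\mathcal{I}$ the set of floating subdomains, cf.~\eqref{def:I}--\eqref{vtilde}); for $u=z+v\in\widetilde{V}_{h}^{DG}$ one has
\[
\bigl((B^{DG})^{-1}u,u\bigr)=\calR^{z}(z,z)+(\widetilde{B}^{-1}v,v)\lesssim\calA(z,z)+J^{2}\calA(v,v)\lesssim J^{2}\bigl(\calA(z,z)+\calA(v,v)\bigr)\lesssim J^{2}\,\calA(u,u).
\]
The minimax principle then gives $\lambda_{m_{0}+1}(B^{DG}A)\gtrsim J^{-2}$, hence $\mathcal{K}_{m_{0}}(B^{DG}A)=\lambda_{\max}/\lambda_{m_{0}+1}\le C J^{2}$ with $C$ depending only on the constants of Lemmas~\ref{lm:equivA:A0},~\ref{le:diag0} and Theorem~\ref{teo2}, hence independent of $\kappa$ and the mesh size.
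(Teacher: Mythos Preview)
Your proposal is correct and fleshes out exactly the argument the paper only sketches as ``a simple consequence of Theorem~\ref{teo2} together with Lemma~\ref{lm:equivA:A0}'': you pass from $\calA$ to $\calA_0$ via Lemma~\ref{lm:equivA:A0}, use the $\calA_0$-orthogonality of the splitting (Lemma~\ref{le:orto}) to decouple the two blocks, and then invoke Lemma~\ref{le:diag0} on $\calZ_\beta$ and Theorem~\ref{teo2} on $\Vcr$. The paper gives no further detail beyond that one sentence, so your write-up is a legitimate (and more explicit) execution of the same route.
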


In Table \ref{tab:DAS3-SIPG} are given the estimated condition numbers 
of $\mathcal{K}(\mathbb{B}_{1}^{DG}\mathbb{A})$  together with the estimated effective condition numbers
$\mathcal{K}_1(\mathbb{B}_{1}^{DG}\mathbb{A})$, and the number of PCG
iterations required for convergence. 
\begin{table}[ht]
	\centering
\small
\begin{tabular}{c|c||c|c|c|c}
\hline\hline
\multirow{2}{*}{$\epsilon$}
& levels &  0               & 1               & 2               & 3\\  \cline{2-6}
&  $h$   & $2^{-1}$   & $2^{-2}$   & $2^{-3}$  & $2^{-4}$\\
\hline\hline
\multirow{2}{*}{$10^{-5}$}
 & $\mathcal{K}(\mathbb{B}_{1}^{DG}\mathbb{A})$ & 2.85e+4 (44)& 3.37e+4 (44)& 3.1e+4 (46)& 2.85e+4 (46)\\
 & $\mathcal{K}_{1}(\mathbb{B}_{1}^{DG}\mathbb{A})$ &  6.27 &  6.33 &  6.45 &  6.49 \\\hline
\multirow{2}{*}{$10^{-3}$}
 & $\mathcal{K}(\mathbb{B}_{1}^{DG}\mathbb{A})$ &   288 (33)&   340 (34)&   313 (34)&   289 (32)\\
 & $\mathcal{K}_{1}(\mathbb{B}_{1}^{DG}\mathbb{A})$ &  6.24 &   6.3 &  6.42 &  6.46 \\\hline
\multirow{2}{*}{$10^{-1}$}
 & $\mathcal{K}(\mathbb{B}_{1}^{DG}\mathbb{A})$ &  7.25 (22)&  7.33 (22)&  7.21 (22)&  7.13 (22)\\
 & $\mathcal{K}_{1}(\mathbb{B}_{1}^{DG}\mathbb{A})$ &  5.62 &   5.6 &  5.71 &  5.73 \\\hline
\multirow{2}{*}{$1$}
 & $\mathcal{K}(\mathbb{B}_{1}^{DG}\mathbb{A})$ &  5.53 (19)&  5.76 (20)&   5.8 (20)&  5.83 (20)\\
 & $\mathcal{K}_{1}(\mathbb{B}_{1}^{DG}\mathbb{A})$ &  5.17 &  5.45 &  5.46 &  5.46 \\\hline
\multirow{2}{*}{$10^{1}$}
 & $\mathcal{K}(\mathbb{B}_{1}^{DG}\mathbb{A})$ &  6.66 (22)&  7.16 (23)&  7.16 (23)&  7.43 (23)\\
 & $\mathcal{K}_{1}(\mathbb{B}_{1}^{DG}\mathbb{A})$ &  5.91 &   6.2 &  6.25 &  6.27 \\\hline
\multirow{2}{*}{$10^{3}$}
 & $\mathcal{K}(\mathbb{B}_{1}^{DG}\mathbb{A})$ &  6.38 (27)&  8.98 (30)&  11.1 (31)&  13.5 (32)\\
 & $\mathcal{K}_{1}(\mathbb{B}_{1}^{DG}\mathbb{A})$ &  5.51 &  6.53 &  6.59 &  6.59 \\\hline
\multirow{2}{*}{$10^{5}$}
 & $\mathcal{K}(\mathbb{B}_{1}^{DG}\mathbb{A})$ &  6.91 (33)&  9.02 (36)&  11.3 (39)&  13.8 (40)\\
 & $\mathcal{K}_{1}(\mathbb{B}_{1}^{DG}\mathbb{A})$ &  6.38 &  6.54 &   6.6 &  6.59 \\\hline
\end{tabular}
	
\caption{Estimated condition number
  $\mathcal{K}(\mathbb{B}_{1}^{DG}\mathbb{A})$ (number of PCG
  iterations) and the effective condition number
  $\mathcal{K}_{1}(\mathbb{B}_{1}^{DG}\mathbb{A})$. \label{tab:DAS3-SIPG}}
		
\end{table} 
As can be seen from these two
tables, $\mathcal{K}(\mathbb{B}_{1}^{DG}\mathbb{A})$  deteriorate
rapidly when $\epsilon$ becomes smaller, but 
$\mathcal{K}_1(\mathbb{B}_{1}^{DG}\mathbb{A})$ are nearly uniformly bounded with respect
to the coefficients and mesh size. These results confirm the theory predicted by Theorem~\ref{teo3}.

      \subsection{Solvers for the non-symmetric IIPG($\beta$)-1 and NIPG($\beta$)-1
        methods\label{sec:nonsym-type-1}}
      We consider the following linear iteration:
\begin{algorithm}\label{algo1} 
Given initial guess $u_0$, for $k=0,1\ldots$ until convergence:
\begin{enumerate}
\item[1.] Set $ e_k = B^{DG}(f -A u_k)$;
\item[2.] Update $u_{k+1}=u_k+e_k\;.$
\end{enumerate}
\end{algorithm}
Here, $A: V_{h}^{DG}\mapsto V_{h}^{DG}$ is the operator associated with the bilinear form of either the  NIPG($\beta$)-1 or IIPG($\beta$)-1 methods (\eqref{ipA} with $\theta=1$ and $\theta=0$, respectively):
\begin{equation}\label{eq:def-A}
(A v, w) : = \calA(v,w), \quad \forall v, w\in V_{h}^{DG}.
\end{equation}
Following ~\cite{Ayuso-de-DiosBZikatanovL-2009aa} we consider as preconditioner $B^{DG}$ the symmetric part of $A$, defined by:
\begin{equation}\label{eqn:invAS}
B^{DG}:= A_S^{-1}, \quad \mbox{where}\qquad
 (A_S v,w):=\frac{1}{2}[\calA(v,w)+\calA(w,v)], \quad
\forall v\in V_{h}^{DG}, \quad \forall w\in V_{h}^{DG}.
\end{equation}
We note that from this definition and~\eqref{eq:coer}, we immediately
have that $A_S$ is symmetric and positive definite. 
 The next result guarantees uniform convergence of the linear
iteration in Algorithm~\ref{algo1} with preconditioner $B^{DG}$ given
by~\eqref{eqn:invAS}. The proof follows \cite[Theorem~5.1]{Ayuso-de-DiosBZikatanovL-2009aa} and it is omitted.
\begin{theorem}\label{thm:IIPG}
  Let $\alpha^{\ast}$ be a fixed value of the penalty parameter
  for which the IIPG($\beta$)-0 bilinear form~\eqref{ipA0}
  $\calA_0(\cdot,\cdot)$ is coercive. Let $\calA(\cdot,\cdot)$ be the
  bilinear form of the IIPG($\beta$)-1 method~\eqref{ipA} with penalty
  parameter $\alpha \geq 4\alpha^{\ast}$. Let $B^{DG}=A_S^{-1}$ be the
  iterator in the linear iteration~\ref{algo1}, and let $u_k$ and
  $u_{k+1}$ be two consecutive iterates obtained via this
  algorithm. Then there exists a positive constant $\Lambda<1$ such
  that
\begin{equation}\label{rollo}
\triplenorm{u-u_{k+1}}_{DG}\leq \Lambda \triplenorm{u-u_{k}}_{DG}\;.
\end{equation}
 \end{theorem}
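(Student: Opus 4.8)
The plan is to control the error-propagation operator of the linear iteration in Algorithm~\ref{algo1}. Write $\calA_S(v,w):=(A_Sv,w)=\tfrac12[\calA(v,w)+\calA(w,v)]$ and $\calA_N(v,w):=\tfrac12[\calA(v,w)-\calA(w,v)]$ for the symmetric and skew parts of $\calA(\cdot,\cdot)$, and set $N=A-A_S$ (skew-symmetric). Since $Au=f$, subtracting gives $u-u_{k+1}=(I-A_S^{-1}A)(u-u_k)=-A_S^{-1}N(u-u_k)$, so~\eqref{rollo} reduces to showing that $I-A_S^{-1}A$ contracts in the norm $\|\cdot\|_{A_S}^2:=(A_S\cdot,\cdot)$; by~\eqref{eq:coer} (applied to IIPG($\beta$)-0 at $\alpha^{\ast}$, hence a fortiori to the symmetric part of IIPG($\beta$)-1 for $\alpha\ge 4\alpha^{\ast}$) and the continuity estimate of~\S\ref{sec:dg}, $\|\cdot\|_{A_S}$ is equivalent to $\triplenorm{\cdot}_{DG}$, so this is enough. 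A short computation exploiting the skew-symmetry of $N$ --- exactly as in~\cite[Theorem~5.1]{Ayuso-de-DiosBZikatanovL-2009aa} --- yields
\[
\|I-A_S^{-1}A\|_{A_S}=\Lambda,\qquad \Lambda:=\sup_{0\ne w,\,v\in V_{h}^{DG}}\frac{\calA_N(w,v)}{\|w\|_{A_S}\,\|v\|_{A_S}},
\]
so everything comes down to proving $\Lambda<1$.

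First I would bound $\calA_N$. For IIPG($\beta$)-1, i.e.~\eqref{ipA} with $\theta=0$, one has $\calA_N(w,v)=\tfrac12[\langle\av{\kappa\nabla v}_{\beta_e},\jump{w}\rangle_{\Eh}-\langle\av{\kappa\nabla w}_{\beta_e},\jump{v}\rangle_{\Eh}]$. Since $w,v$ are piecewise linear, $\av{\kappa\nabla w}_{\beta_e}=\kappa_e\av{\nabla w}$ is constant on each $e$, so it pairs only against $\calP^{0}_e(\jump{\cdot})$; Cauchy--Schwarz together with the scaled trace bound used in~\eqref{cota:mix} gives, with $C_\sharp$ the associated shape-regularity constant (so that $|\langle\av{\kappa\nabla v}_{\beta_e},\jump{w}\rangle_{\Eh}|\le C_\sharp^{1/2}|v|_{1,h,\kappa}J_0(w)$, where $J_0(w)^2:=\sum_{e\in\Eh}h_e^{-1}\kappa_e\|\calP^{0}_e(\jump{w})\|_{0,e}^2$),
\[
|\calA_N(w,v)|\le \tfrac12\,C_\sharp^{1/2}\bigl(|w|_{1,h,\kappa}J_0(v)+|v|_{1,h,\kappa}J_0(w)\bigr).
\]

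Next I would bound $\|w\|_{A_S}^2=\calA_S(w,w)=|w|_{1,h,\kappa}^2-\langle\av{\kappa\nabla w}_{\beta_e},\jump{w}\rangle_{\Eh}+\alpha\sum_{e\in\Eh}h_e^{-1}\kappa_e\|\jump{w}\|_{0,e}^2$ from below. With the same $C_\sharp$, Young's inequality, $\|\calP^{0}_e(\jump{w})\|_{0,e}\le\|\jump{w}\|_{0,e}$ in the penalty term, and $\alpha=4\alpha^{\ast}$, one gets $\calA_S(w,w)\ge\tfrac12|w|_{1,h,\kappa}^2+(4\alpha^{\ast}-\tfrac12 C_\sharp)J_0(w)^2$. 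Coercivity of IIPG($\beta$)-0 at $\alpha^{\ast}$ forces $\alpha^{\ast}>C_\sharp/4$ (this is precisely positive-definiteness of the $2\times2$ form with diagonal $1,\alpha^{\ast}$ and off-diagonal $-C_\sharp^{1/2}/2$ underlying the computation after~\eqref{cota:mix}), hence $4\alpha^{\ast}-\tfrac12 C_\sharp>2\alpha^{\ast}$ and $\calA_S(w,w)\ge\tfrac12(|w|_{1,h,\kappa}^2+4\alpha^{\ast}J_0(w)^2)$. A weighted Cauchy--Schwarz absorbing the scale mismatch between $|\cdot|_{1,h,\kappa}$ and $J_0$ then gives $\bigl(|w|_{1,h,\kappa}J_0(v)+|v|_{1,h,\kappa}J_0(w)\bigr)^2\le\tfrac1{4\alpha^{\ast}}(|w|_{1,h,\kappa}^2+4\alpha^{\ast}J_0(w)^2)(|v|_{1,h,\kappa}^2+4\alpha^{\ast}J_0(v)^2)\le\tfrac1{\alpha^{\ast}}\calA_S(w,w)\calA_S(v,v)$, so $\Lambda^2\le C_\sharp/(4\alpha^{\ast})<1$ and~\eqref{rollo} follows with $\Lambda=\tfrac12\sqrt{C_\sharp/\alpha^{\ast}}$.

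\textbf{Where the difficulty lies.} The substance of the proof is the bookkeeping of constants: one must verify that a \emph{single} mesh constant $C_\sharp$ simultaneously governs (a) the coercivity threshold of IIPG($\beta$)-0 at $\alpha^{\ast}$ and (b) the bound on the skew part $\calA_N$, so that the hypothesis ``$\calA_0(\cdot,\cdot)$ coercive at $\alpha^{\ast}$'' converts, after choosing $\alpha=4\alpha^{\ast}$, into the \emph{strict} inequality $\Lambda<1$ --- both (a) and (b) stem from estimating $\langle\av{\kappa\nabla\cdot}_{\beta_e},\jump{\cdot}\rangle_{\Eh}$, and this must be made explicit; it is also where the factor $4$ (as opposed to, say, $2$) is exactly what is needed to leave room for $\calA_S$ to dominate the skew part. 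A secondary, minor point is that the contraction is native to $\|\cdot\|_{A_S}$, so~\eqref{rollo} is read in that norm (equivalent to $\triplenorm{\cdot}_{DG}$ by the coercivity and continuity of $\calA_S$). The coefficient $\kappa$ introduces no new idea: every estimate above is elementwise or edgewise, and the weights $\beta_e,\kappa_e,\kappa_T$ are chosen precisely so that $\kappa$ factors out, exactly as in the coercivity argument sketched in~\S\ref{sec:dg}; the whole proof therefore parallels~\cite[Theorem~5.1]{Ayuso-de-DiosBZikatanovL-2009aa} with $\kappa$-weights inserted, which is why it is omitted in the paper.
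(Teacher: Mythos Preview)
Your proposal is correct and follows essentially the same approach as the paper, which omits the proof entirely and refers to \cite[Theorem~5.1]{Ayuso-de-DiosBZikatanovL-2009aa}; your argument is precisely the $\kappa$-weighted version of that proof, with the symmetric--skew splitting $A=A_S+N$, the identity $\|I-A_S^{-1}A\|_{A_S}=\sup\calA_N(w,v)/(\|w\|_{A_S}\|v\|_{A_S})$, and the constant bookkeeping that ties the coercivity threshold $\alpha^{\ast}$ to the bound on the consistency term via the same trace constant $C_\sharp$. Your ``Where the difficulty lies'' paragraph correctly isolates the one substantive point---that the \emph{same} shape-regularity constant must control both the skew part and the coercivity estimate so that $\alpha\ge 4\alpha^{\ast}$ yields the strict bound $\Lambda<1$---which is exactly why the factor $4$ appears in the hypothesis.
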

To verify Theorem \ref{thm:IIPG} we have computed the
$\calA$-norm (which is obviously equivalent in $V^{DG}_h$ to the $\triplenorm{\cdot}_{DG}$) of the error propagation operator: $E=I-B^{DG}A=I-A^{-1}_SA$, for different meshes and values of $\epsilon$. This norm gives us the contraction number of the linear
  iteration in Algorithm~\ref{algo1}, and so an estimate for the constant $\Lambda$ in Theorem \ref{thm:IIPG}. The results are reported in Table~\ref{tab:E-IIPG-AS-k1-k2}. 
\begin{table}[!ht]
\centering
\label{tab:E-IIPG-AS-k2}
\begin{tabular}{c|c||c|c|c|c|c|c|c|c|c|c|c}\hline\hline
 &  & \multicolumn{11}{c}{$\epsilon$}\\ \cline{3-13}
levels & h &$10^{-5}$ & $10^{-4}$ & $10^{-3}$ & $10^{-2}$ & $10^{-1}$& $1$ & $10^{1}$& $10^{2}$ & $10^{3}$ & $10^{4}$ & $10^{5}$
\\\hline\hline
0 & $2^{-1}$&  0.20  &   0.20  &   0.20  &   0.20  &   0.20  &   0.20  &   0.19  &   0.19  &   0.19  &   0.19  &   0.19\\\hline
1 & $2^{-2}$&  0.14  &   0.14  &   0.14  &   0.14  &   0.14  &   0.14  &   0.14  &   0.14  &   0.14  &   0.14  &   0.14\\\hline
2 & $2^{-3}$&  0.16  &   0.16  &   0.16  &   0.16  &   0.16  &   0.15  &   0.15  &   0.16  &   0.16  &   0.16  &   0.16\\\hline
3 & $2^{-4}$& 0.16  &    0.16  &   0.16  &   0.16  &  0.16   &  0.16   & 0.16   & 0.16   & 0.16  & 0.16  & 0.16  \\\hline
\hline
\end{tabular}

\caption{
\label{tab:E-IIPG-AS-k1-k2}
Norm of the error propagator $E=(I-A_S^{-1}A)$ for $A$ corresponding to 
IIPG discretization with $\alpha  = 4\alpha^{*}$.
}
\end{table}
 
 More experiments for the IP($\beta$)-1 methods can be found in \cite{ahzz:tech}.

\section*{Acknowledgement}
The work of the first author was partially supported by the Spanish MEC under projects MTM2011-27739-C04-04 and HI2008-0173. The work of the second author was supported in part
  by NSF DMS-0715146, NSF DMS-0915220, and DTRA Award HDTRA-09-1-0036. The work of the third author was supported in part
  by NSF DMS-0715146 and DTRA Award HDTRA-09-1-0036. The work of the fourth author was supported in part
  by NSF DMS-0810982 and NSF OCI-0749202. We would also like to thank the anonymous referees for their carefully proofreading this manuscript. Their suggestions helped a lot to improve this paper.
\appendix

\appendix
\section{Construction of an Interpolation Operator}
\label{sec:projection}
We now construct an interpolation operator which satisfies the
approximation and stability properties \eqref{eqn:app}-\eqref{eqn:stab} in Lemma~\ref{lm:interpolation}.  
To begin with, let us introduce some notation. Given a conforming triangulation $\Th$, recall that $\Eh$ is the set
of edges/faces of $\Th$. Let $S_{h}\subset H^{1}(\Omega)$ be the
conforming $\mathbb{P}^{d}(\Th)\cap\mathcal{C}^{0}(\Omega)$ Lagrange finite element space (quadratics in $d=2$ and cubics for $d=3$). We split the set of DOFs of $S_{h}$ into two subsets $\mathcal{C}(\Th)$ and ${\mathcal N}(\Th)$, where  $\mathcal{C}(\Th)$ contains the DOFs of $V_{h}^{CR}$ corresponding to the barycenters of the edges/faces in $\Eh$  and ${\mathcal N}(\Th)$ contains all the remaining DOFs of $S_{h}$.  We also denote the restriction of $\Th$, $\Eh$, $V_{h}^{CR}$ or $S_{h}$ on a given subdomain $G$ by $\Th(G)$, $\Eh(G)$, $V_{h}^{CR}(G)$ or $S_{h}(G)$, respectively.

Let $\cT_{\h}$ be a coarser mesh, i.e., $\Th$ is either the same as
$\cT_{\h}$ or a refinement of it with $h\le \h.$ We now start building
the operator $P_{h}^{\h}: V_{h}^{CR} \to V_{\h}^{{\rm conf}}$, where we recall that 
$V_{\h}^{{\rm conf}}$ is the piecewise $\mathbb{P}^{1}$ conforming
finite element space defined on $\cT_{\h}.$ The basic idea is to, in each subdomain $\Omega_i$,
embed $V_{h}^{CR}(\Omega_i)$ into $S_{h}(\Omega_i)$. Then we interpolate the result in $V_{\h}^{{\rm conf}}$ on $\cT_{\h}$ using a quasi-interpolation operator.

To embed $V_{h}^{CR}$ into $S_{h}$, we modify the inclusion operator
introduced in \cite{BrennerS-2003aa}, and define it at the subdomain
level as follows. For any $v\in V_{h}^{CR}$ we define $E_{i} :
V_{h}^{CR}(\Omega_{i})\to S_{h}(\Omega_{i})$ on each subdomain
$\Omega_{i}$ as:
\begin{equation}\label{def:ei}
	(E_{i} v)(p)= \left\{
		\begin{array}{ll}
			v(p), & \mbox{ if } p\in \mathcal{C}\left(\Th\right) \cap \overline{\O}_i\\
	\frac{1}{\#M_{p}} \sum_{\K\in M_{p}} v_{T}(p),  & \mbox{ if }  p\in \mathcal{N}\left(\Th\right) \cap \O_i\\
		\frac{1}{\#M^{\partial}_{p}} \sum_{e\in M^{\partial}_{p}} v_{e}(p), & \mbox{ if }  p\in \mathcal{N}\left(\Th\right) \cap \partial\O_i,
		\end{array}\right.
\end{equation}	
where $M_{p} :=\{ T\in \Th (\Omega_{i}): p\in \partial T\}$ is the set
of elements sharing $p$ and $M^{\partial}_{p}:=\{e\in \Eh(\Omega_{i}):
e\subset \partial \Omega_{i} \; \mbox{ s.t.} \;\; p\in \partial e\} $ is the
set of edges on $\partial \O_{i}$ sharing the DOF $p$. Here $\#M_{p}$ and
$\#M^{\partial}_{p}$ are the cardinality of these sets
respectively, and $v_{T}$, $v_{e}$ are the restriction of $v$ on $T$
and $e$ respectively.

Observe that, this construction differs from the one in \cite[Equation (3.1)]{BrennerS-2003aa} in the treatment of the DOFs on $\partial \Omega_{i}$.  From
\eqref{def:ei}, for each DOF $p\in \partial \O_{i}$,
$(E_{i} v)(p)$ contains only the contributions of $v$ from the
boundary of $\O_{i}$, not from the interior.  Therefore, it is obvious that 
$(E_{i} v)(p) \equiv (E_{j} v)(p)$ for any DOF $p$ in the
interior of the interface $\Gamma=\partial\O_i\cap \partial\O_j\,\,\, (i\ne j)$ between the subdomains $\Omega_{i}$ and $\Omega_{j}$. 
This special treatment at boundary points guarantees that the global function
$\eta|_{\Omega_{i}}:=E_{i} v$ is continuous for the points in the interior of each interface. However, this global function $\eta$ will generally be multi-valued at the points on $\partial \Gamma$.  

Although the construction of $E_{i}$ in \eqref{def:ei} is different from \cite{BrennerS-2003aa}, the same analysis in \cite{BrennerS-2003aa} can be carried out here. We summarize the the properties of $E_{i}$ below, and omit the detailed proof.
\begin{lemma}\label{lm:local-iso}
The linear operator $E_{i} : V_{h}^{CR}(\Omega_{i})\to S_{h}(\Omega_{i})$ defined in \eqref{def:ei} satisfies that
$$
	\left|E_{i} v\right|_{1,\Omega_{i}} \simeq |v|_{1,h,\Omega_{i}},\quad \mbox{ and } \quad\left\|v - E_{i} v\right\|_{0,\Omega_{i}} \lesssim h\left|v\right|_{1,h,\Omega_{i}}, \quad \forall v \in V_{h}^{CR}.
$$
\end{lemma}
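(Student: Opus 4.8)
The plan is to mimic the argument of Brenner~\cite{BrennerS-2003aa}, checking that her analysis is insensitive to the modification introduced at the boundary DOFs in~\eqref{def:ei}. First I would note that both $V_h^{CR}(\Omega_i)$ and $S_h(\Omega_i)$ are finite-dimensional, and that $E_i$ is defined locally via nodal values on $S_h$; hence it suffices to establish the two bounds elementwise, on a reference configuration, and then sum over $\K\in\Th(\Omega_i)$ using shape-regularity and a standard scaling argument. On each element $\K$, the difference $(v - E_i v)|_\K$ is a polynomial of degree $\le d$ whose nodal values at the points of $\mathcal{C}(\Th)\cap\overline{\K}$ vanish (those are the CR edge-barycenter DOFs, where $E_i v$ interpolates $v$ exactly), and whose remaining nodal values are averages of the traces of $v$ from elements sharing that DOF. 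The key quantitative input is that for two elements $\K,\K'$ sharing a face, the jump of the (piecewise linear) function $v$ across that face has mean zero — this is precisely the defining property~\eqref{defCR} of $V_h^{CR}$ — so the difference of nodal values $v_\K(p)-v_{\K'}(p)$ at a shared vertex/edge DOF $p$ is controlled by $h^{1/2}$ times the $L^2$-norm of the tangential gradient of the jump along that face, which in turn is bounded by $|v|_{1,h,\K\cup\K'}$ via trace and inverse inequalities.

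With that local estimate in hand, the approximation bound $\|v - E_i v\|_{0,\Omega_i} \lesssim h\,|v|_{1,h,\Omega_i}$ follows by squaring, summing over all elements of $\Th(\Omega_i)$, and using that each face is shared by at most two elements (boundedness of the overlap from shape-regularity); the factor $h$ comes from the scaling of the $L^2$-norm of a fixed-degree polynomial against its nodal values. For the $H^1$-equivalence $|E_i v|_{1,\Omega_i} \simeq |v|_{1,h,\Omega_i}$, one direction is immediate from the triangle inequality, $|E_i v|_{1,\Omega_i} \le |v|_{1,h,\Omega_i} + |v - E_i v|_{1,h,\Omega_i}$, combined with an inverse inequality $|v-E_i v|_{1,\K} \lesssim h^{-1}\|v - E_i v\|_{0,\K}$ applied to the (fixed-degree) polynomial $v - E_i v$ on each $\K$ and the approximation estimate just proved; the reverse direction follows symmetrically, writing $|v|_{1,h,\Omega_i} \le |E_i v|_{1,\Omega_i} + |v - E_i v|_{1,h,\Omega_i}$ and bounding the last term the same way.

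The one point requiring genuine care — and the main obstacle — is the treatment of DOFs lying on $\partial\Omega_i$, where~\eqref{def:ei} averages only over the traces from faces $e\subset\partial\Omega_i$ rather than over all elements meeting $p$. Here I would observe that the restriction of $v$ to the $(d-1)$-dimensional skeleton $\partial\Omega_i\cap\Eh$ is itself a piecewise-linear function whose jumps across $(d-2)$-faces have mean zero (again by~\eqref{defCR}), so the same mean-zero-jump mechanism applies one dimension down, and the boundary-DOF contributions to $\|v - E_i v\|_{0,\K}$ for $\K$ touching $\partial\Omega_i$ are controlled by $h^{1/2}\|v\|$-type quantities on the boundary faces, which a trace inequality bounds by $h\,|v|_{1,h,\K}$ after the usual scaling. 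Thus Brenner's estimates survive the modification verbatim up to constants depending only on shape-regularity and $d$, and the stated equivalences follow. I would then simply refer the reader to~\cite{BrennerS-2003aa} for the remaining routine details, as the lemma statement already indicates the proof is omitted.
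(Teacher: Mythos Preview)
Your overall plan is correct and matches the paper, which simply asserts that Brenner's analysis in~\cite{BrennerS-2003aa} carries over to the modified operator~\eqref{def:ei} and omits the details. Your sketch of the elementwise argument---mean-zero face jumps from the CR condition, telescoping across face-adjacent elements, inverse inequalities, and summation via shape-regularity---is the right one.

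However, your justification for the boundary DOFs contains an error. You claim that the trace of $v$ on $\partial\Omega_i$ has mean-zero jumps across $(d-2)$-faces ``again by~\eqref{defCR}''. This does not follow: the CR condition~\eqref{defCR} only controls jumps across the $(d-1)$-faces of $\Th$ and says nothing about the behaviour of the boundary trace across $(d-2)$-subfaces. In two dimensions, for example, two adjacent boundary edges $e,e'\subset\partial\Omega_i$ meeting at a vertex $p$ come from elements $T_e,T_{e'}\in\Th(\Omega_i)$, and there is no reason for $v|_{T_e}(p)=v|_{T_{e'}}(p)$; the ``mean over a point'' is just the point value, which need not vanish. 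The correct observation is simpler: since each boundary face $e\subset\partial\Omega_i$ has a unique element $T_e\in\Th(\Omega_i)$ with $e\subset\partial T_e$, the value $v_e(p)$ in~\eqref{def:ei} is just $v|_{T_e}(p)$. Hence $(E_i v)(p)$ at a boundary DOF $p$ is still an average of element values $v|_{T'}(p)$ over some collection of elements $T'\in\Th(\Omega_i)$ containing $p$ (possibly a proper subset of $M_p$, possibly with repetitions), and Brenner's telescoping argument through interior faces of $\Omega_i$ bounds each difference $v|_T(p)-v|_{T'}(p)$ exactly as for interior DOFs. No ``one-dimension-down'' mechanism is needed, and once you drop that claim the rest of your argument goes through.
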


Let $\mathcal{Q}_i:H^1(\Omega_i)\to
V_{\h}^{\rm{conf}}(\O_i)$  and $\mathcal{Q}_{\Gamma}: H^{1}(\Gamma) \to
V_{\h}^{\rm{conf}}(\Gamma)$ be the Scott-Zhang quasi-interpolation operators
on $\O_i$ and  on the interface $\Gamma \subset \O_{i}$, respectively. We now recall the definition and main properties of these
operators.  In the sequel, we should denote a generic
vertex of $\cT_{\h}$ by $p$. Let $\omega_{p} :=\bigcup \left\{\K\in \cT_{\h}(\Omega_{i}):
p\in \partial \K \right\} \subset \Omega_{i}$ be the local patch containing $p$, and $\omega_{\K} := \bigcup \left\{\omega_{p}:
p\in \partial \K \right\}$ for each $\K \in \cT_{\h}(\Omega_{i})$. Similarly, on the
interface $\Gamma$, we define $\mathcal{O}_{p} :=\bigcup \{e\in
\mathcal{E}_{\h}: e\subset \Gamma \mbox{ and } p\in \partial e \} \subset \Gamma$ and $\mathcal{O}_{e} := \bigcup
\{\mathcal{O}_{p}: p\in \partial e\}$ for each $e \in
\mathcal{E}_{\h}(\Gamma)$. For any vertex $p$, let $\phi_{p}\in V_{\h}^{{\rm
    conf}}$ be the nodal basis function, and define the dual basis
$\theta_{p} \in V_{\h}^{{\rm conf}}(\omega_{p})$ such that
$$
	\int_{\omega_{p}} \theta_{p} v dx = v(p),\qquad \forall v\in V_{\h}^{{\rm conf}}.
$$
To define $\mathcal{Q}_{i}$, let us choose some\footnote{Note that the
  choice of $\K$ may not be unique.} $\K\subset \omega_p$ for each
vertex $p\in \cT_{\h}(\Omega_{i})$. Then, the Scott-Zhang
quasi-interpolation operator is defined by
\begin{equation*}
\mathcal{Q}_i \eta=\sum_{p\in \mathcal{T}_{\h}(\O_i)}\left(\int_{\K}\theta_{p} \eta  dx\right )\phi_p\;, \quad \eta \in H^{1}(\Omega_{i}).
\end{equation*}
The operator $\mathcal{Q}_{\Gamma}$ is defined similarly, but
restricted on the interface $\Gamma$. Both operators enjoy the
following approximation and stability properties (see
\cite{Oswald.P1994,Scott.R;Zhang.S1990} for a proof):
\begin{lemma}\label{lm:int}
  For any $\eta\in H^{1}(\Omega_i)$, the operator
  $\mathcal{Q}_i:H^{1}(\Omega_i) \to V_{\h}^{\rm{conf}}(\Omega_i)$  satisfies:
 \begin{equation}\label{eqn:Qi}
    \|\mathcal{Q}_i \eta\|_{0, \K}\lesssim \|\eta\|_{0,\omega_{\K}}, \;\;  \|\mathcal{Q}_i \eta\|_{1,\K}\lesssim \|\eta\|_{1,\omega_{\K}}\;,\;\; \; \| (I-\mathcal{Q}_i) \eta\|_{0, \K} \lesssim \h \|\eta\|_{1,\omega_{\K}}, \;\; \forall \K\in \cT_{\h}(\Omega_{i}).
	\end{equation}  
For any $\xi\in H^{1}(\Gamma)$, the operator $\mathcal{Q}_{\Gamma}: H^{1}(\Gamma) \to V_{\h}^{\rm{conf}}(\Gamma)$ satisfies the following
  properties:
	\begin{equation}\label{eqn:Qg}
	\|\mathcal{Q}_{\Gamma} \xi\|_{0,e} \lesssim \|\xi\|_{0, \mathcal{O}_{e}} 
	\;, \quad \|(I-\mathcal{Q}_{\Gamma}) \xi)\|_{0,e} \lesssim \h\|\xi\|_{1,\mathcal{O}_e} \quad \forall e\in \mathcal{E}_{\h} (\Gamma)\;.
	\end{equation}
	Furthermore, both operators are linear preserving; i.e. $\mathcal{Q}_i\eta \equiv\eta $ for any $\eta\in V_{\h}^{\rm{conf}} (\O_{i})$, and similarly $\mathcal{Q}_{\Gamma}\xi \equiv\xi$ for any $\xi\in
        V_{\h}^{\rm{conf}}(\Gamma)$.
\end{lemma}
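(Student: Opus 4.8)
The statement is the classical Scott--Zhang quasi-interpolation estimate; the detailed verification is in \cite{Scott.R;Zhang.S1990,Oswald.P1994}, and the plan is to reprove it by the standard scaling and Bramble--Hilbert argument, which I outline here. There are three ingredients, applied in turn: polynomial reproduction, local stability, and a local Poincar\'e estimate; the interface operator $\mathcal{Q}_{\Gamma}$ is then handled by exactly the same argument with $d$ replaced by $d-1$.

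First I would record \emph{polynomial reproduction}. By construction $\theta_{p}$ is the $L^{2}(K)$-biorthogonal dual of the affine nodal basis of $V_{\h}^{{\rm conf}}$ restricted to the chosen simplex $K\subset\omega_{p}$, so $\int_{K}\theta_{p}\,w\,dx=w(p)$ whenever $w$ is affine on $K$, in particular for every $w\in V_{\h}^{{\rm conf}}$. Hence $\mathcal{Q}_{i}w=\sum_{p}w(p)\phi_{p}=w$ for all $w\in V_{\h}^{{\rm conf}}(\Omega_{i})$ and, identically, $\mathcal{Q}_{\Gamma}\xi=\xi$ for $\xi\in V_{\h}^{{\rm conf}}(\Gamma)$; this is the last assertion of the lemma, and in particular $\mathcal{Q}_{i}$ reproduces constants (and global affine functions).

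Next comes the \emph{local stability} \eqref{eqn:Qi}. Fix $\K\in\cT_{\h}(\Omega_{i})$. For a vertex $p\in\partial\K$ the coefficient $c_{p}:=\int_{K_{p}}\theta_{p}\eta\,dx$ satisfies $|c_{p}|\le\|\theta_{p}\|_{0,K_{p}}\|\eta\|_{0,K_{p}}$, and scaling to the reference simplex gives $\|\theta_{p}\|_{0,K_{p}}\lesssim h_{K_{p}}^{-d/2}$ while $\|\phi_{p}\|_{0,\K}\lesssim h_{\K}^{d/2}$; since $\cT_{\h}$ is shape-regular and quasi-uniform one has $h_{K_{p}}\simeq h_{\K}\simeq\h$, so summing over the $d+1$ vertices of $\K$ yields $\|\mathcal{Q}_{i}\eta\|_{0,\K}\lesssim\|\eta\|_{0,\omega_{\K}}$. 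For the $H^{1}$-bound I would combine this with a gradient estimate obtained via constant reproduction and an inverse inequality: for any constant $c$, $|\mathcal{Q}_{i}\eta|_{1,\K}=|\mathcal{Q}_{i}(\eta-c)|_{1,\K}\lesssim\h^{-1}\|\mathcal{Q}_{i}(\eta-c)\|_{0,\K}\lesssim\h^{-1}\|\eta-c\|_{0,\omega_{\K}}$, and picking $c$ the mean of $\eta$ over $\omega_{\K}$ and invoking the Poincar\'e inequality on the uniformly shape-regular patch $\omega_{\K}$ gives $|\mathcal{Q}_{i}\eta|_{1,\K}\lesssim|\eta|_{1,\omega_{\K}}$, hence $\|\mathcal{Q}_{i}\eta\|_{1,\K}\lesssim\|\eta\|_{1,\omega_{\K}}$. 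The approximation bound is then immediate: since $\mathcal{Q}_{i}$ reproduces constants, $(I-\mathcal{Q}_{i})\eta=(I-\mathcal{Q}_{i})(\eta-c)$, so by the $L^{2}$-stability just proved $\|(I-\mathcal{Q}_{i})\eta\|_{0,\K}\le\|\eta-c\|_{0,\K}+\|\mathcal{Q}_{i}(\eta-c)\|_{0,\K}\lesssim\|\eta-c\|_{0,\omega_{\K}}$, and taking $c$ the patch-mean and applying Poincar\'e gives $\|(I-\mathcal{Q}_{i})\eta\|_{0,\K}\lesssim\h\,|\eta|_{1,\omega_{\K}}\le\h\,\|\eta\|_{1,\omega_{\K}}$, which is the last estimate in \eqref{eqn:Qi}.

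Finally I would treat $\mathcal{Q}_{\Gamma}$: its construction is identical with $\Omega_{i}$ replaced by the $(d-1)$-dimensional manifold $\Gamma$ carrying the induced mesh $\mathcal{E}_{\h}(\Gamma)$, the patches $\omega_{p},\omega_{\K}$ replaced by $\mathcal{O}_{p},\mathcal{O}_{e}$, and every scaling exponent $d$ replaced by $d-1$; with these substitutions the two steps above give \eqref{eqn:Qg}. As for the main point: there is no real obstacle, since the result is classical; the only thing requiring care is the \emph{uniformity} of the scaling and Poincar\'e constants over all vertex and element patches, which is guaranteed by the shape-regularity and quasi-uniformity of $\cT_{\h}$, together with a consistent (possibly non-unique) choice of the auxiliary simplices $K_{p}$ and boundary edges — harmless precisely because $V_{\h}^{{\rm conf}}$ is piecewise $\mathbb{P}^{1}$, so $\theta_{p}$ is determined up to constants independent of the patch.
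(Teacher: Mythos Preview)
Your outline is correct and follows the classical Scott--Zhang argument. Note, however, that the paper does not actually prove this lemma: it simply states the result and refers the reader to \cite{Scott.R;Zhang.S1990,Oswald.P1994} for the proof, so there is no ``paper's own proof'' to compare against beyond those references. Your proposal is precisely the standard proof one finds there --- polynomial reproduction from the biorthogonality of $\theta_{p}$, local $L^{2}$-stability via scaling of $\|\theta_{p}\|_{0,K_{p}}$ and $\|\phi_{p}\|_{0,\K}$, the $H^{1}$-stability and approximation bounds via constant reproduction plus a patch Poincar\'e inequality, and finally the same argument in one lower dimension for $\mathcal{Q}_{\Gamma}$. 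One minor remark: the paper's displayed definition of $\theta_{p}$ uses $\int_{\omega_{p}}$ while the operator itself integrates over a chosen $K\subset\omega_{p}$; your version (with $\theta_{p}$ supported on the chosen simplex $K_{p}$) is the standard and correct normalization, and is what makes the scaling $\|\theta_{p}\|_{0,K_{p}}\lesssim h_{K_{p}}^{-d/2}$ clean.
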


Now we are ready to define the interpolation operator $P_{h}^{\h}: V_{h}^{CR} \to
V^{\rm{conf}}_{\h}$:
\begin{equation}
\label{eq:loc-int}
	\left(P_{h}^{\h} v\right)|_{\Omega_{i}} (p)=\left\{ 
		\begin{array}{ll}
			\left(\mathcal{Q}_{i} E_{i} v\right) (p)	, & \mbox{ if } p\in \Omega_{i}\\
			\left(\mathcal{Q}_{\Gamma} E_{i} v \right)(p), & \mbox{ if } p\in {\rm int}(\Gamma) \mbox{ for each side } \Gamma \subset \partial\Omega_{i}\\
			0,  & \mbox{elsewhere}\\
		\end{array}
		\right.,
\end{equation}
where ${\rm int}(\Gamma)$ is the interior of $\Gamma.$ From the definition of $E_{i}$ in \eqref{def:ei}, if $p$ is a vertex of $\calT_{\h}$ in the interior of the interface $\Gamma = \O_{i} \cap \O_{j}$, we have $(E_{i}v)(p) = (E_{j}v)(p),$ which implies  $\left(\mathcal{Q}_{\Gamma} E_{i} v \right)(p) \equiv \left(\mathcal{Q}_{\Gamma} E_{j} v \right)(p)$. The special
treatment for the interface in \eqref{eq:loc-int} guarantees the global
continuity of $P_{h}^{\h} v$. Thus,  $P_{h}^{\h} v \in V^{\rm{conf}}_{\h}$ is well-defined. Now, we
show that the operator $P_{h}^{\h}$ defined in \eqref{eq:loc-int} does satisfy the
approximation and stability properties
\eqref{eqn:app}-\eqref{eqn:stab}:
\begin{lemma}\label{lema:PP}
	For any $v \in V_{h}^{CR}$, the operator $P_{h}^{\h} : V_{h}^{CR } \to V_{\h}^{\rm{conf}}$ satisfies 
	\begin{eqnarray}
		\|(I-P_{h}^{\h}) v \|_{0,\kappa} & \lesssim& \h|\log (2\h/h)|^{1/2} \|v\|_{1,h,\kappa}, \label{approx:0P}\\
		|P_{h}^{\h} v |_{1,\kappa} & \lesssim& |\log (2\h/h)|^{1/2} \|v\|_{1,h,\kappa}\;. \label{eqn:Ph-stab}
	\end{eqnarray}
\end{lemma}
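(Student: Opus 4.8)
The plan is to estimate the two norms subdomain by subdomain, exploiting the fact that $\kappa$ is constant on each $\Omega_i$, so that on each subdomain the weighted norms reduce to the unweighted ones up to the factor $\kappa|_{\Omega_i}$. Fix a subdomain $\Omega_i$ and set $\eta_i:=E_i v \in S_h(\Omega_i)$. By Lemma~\ref{lm:local-iso} we already have $|\eta_i|_{1,\Omega_i}\simeq |v|_{1,h,\Omega_i}$ and $\|v-\eta_i\|_{0,\Omega_i}\lesssim h|v|_{1,h,\Omega_i}$. The remaining task is to control the error and stability of the two Scott--Zhang operators $\mathcal{Q}_i$ and $\mathcal{Q}_\Gamma$ applied to $\eta_i$, and to glue the interface contributions. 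For the interior part, a standard Scott--Zhang argument using~\eqref{eqn:Qi} gives $\|(I-\mathcal{Q}_i)\eta_i\|_{0,\Omega_i}\lesssim \h|\eta_i|_{1,\Omega_i}$ and $|\mathcal{Q}_i\eta_i|_{1,\Omega_i}\lesssim \|\eta_i\|_{1,\Omega_i}$; combined with Lemma~\ref{lm:local-iso} these already yield the desired bounds \emph{without} a logarithmic factor for the purely interior nodes. So the logarithm must come entirely from the treatment of the interface degrees of freedom.

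The heart of the matter is therefore the interface term. On a side $\Gamma\subset\partial\Omega_i$ the value of $P_h^{\h}v$ at an interior vertex $p$ of $\Gamma$ is $(\mathcal{Q}_\Gamma E_i v)(p)$, which depends only on the trace $\eta_i|_\Gamma\in H^{1/2+\epsilon}(\Gamma)$ but is being measured in the full $H^1(\Omega_i)$-norm through its conforming extension. The strategy here is classical for nonconforming/substructuring arguments (à la Bramble--Pasciak--Schatz, or Brenner~\cite{BrennerS-2003aa}): one writes the finite element function $P_h^{\h}v$ restricted to the boundary layer of elements touching $\Gamma$, bounds its $H^1$-energy by the $H^{1/2}_{00}(\Gamma)$-norm of the boundary data, and then invokes the discrete trace/extension estimate of the form
\begin{equation*}
\|\xi_{\h}\|_{H^{1/2}_{00}(\Gamma)}^2 \lesssim \bigl(1+\log(2\h/h)\bigr)\,\|\xi_{\h}\|_{1,h,\Gamma}^2
\end{equation*}
valid for discrete functions $\xi_{\h}$ that arise as traces of $S_h$-functions; this is where the $|\log(2\h/h)|$ is generated. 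Using~\eqref{eqn:Qg} for the approximation error of $\mathcal{Q}_\Gamma$ and a discrete Sobolev inequality on $\Gamma$, one obtains both an approximation bound $\|(I-\mathcal{Q}_\Gamma)\eta_i|_\Gamma\|_{0,\Gamma}$-type estimate and a stability bound for the interface contribution, each carrying one factor of $|\log(2\h/h)|^{1/2}$.

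Putting the pieces together: I would decompose $P_h^{\h}v = (\text{interior Scott--Zhang part}) + (\text{interface correction})$, bound the first without logarithms and the second with the logarithmic trace estimate, square and sum over $i=1,\dots,M$ (weighting by $\kappa|_{\Omega_i}$, which is legitimate since everything is done subdomain-wise), and use $\sum_i \kappa|_{\Omega_i}\|v\|_{1,h,\Omega_i}^2 = \|v\|_{1,h,\kappa}^2$. The main obstacle I anticipate is making the interface estimate rigorous: one must carefully track that only the \emph{trace} data enters, handle the multivalued behaviour of $\eta=E_iv$ at the wire-basket points $\partial\Gamma$ (these are lower-dimensional and contribute nothing in $d=2$, but need a separate edge-term argument in $d=3$), and correctly reduce to the scaled reference configuration so that the discrete trace inequality with the sharp $\log(2\h/h)$ constant applies on meshes of size $h$ inside a patch of size $\h$. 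Since the excerpt says the analysis "can be carried out as in~\cite{BrennerS-2003aa}" and refers to \cite{Oswald.P1994,Scott.R;Zhang.S1990}, I would cite those for the discrete trace and quasi-interpolation estimates rather than reproving them, and present the subdomain-gluing and $\kappa$-weighting bookkeeping in full, as that is the only genuinely new ingredient here.
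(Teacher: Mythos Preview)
Your overall skeleton---work subdomain by subdomain using $\eta_i=E_iv$, handle the interior with Scott--Zhang without a logarithm, then isolate an interface contribution that carries the log---is correct and matches the paper. However, you misidentify both where the logarithm actually originates and which tool extracts it.

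In the paper's argument (which follows \cite[Lemma~4.6]{Bramble.J;Xu.J1991}) one sets $\chi_i:=\mathcal{Q}_iE_iv$ and observes that $P_h^{\h}v-\chi_i$ is supported only on the vertices of $\cT_{\h}$ lying on $\partial\Omega_i$. These are split into vertices in ${\rm int}(\Gamma)$ and vertices on $\partial\Gamma$. The face-interior contribution is bounded by $\h^{2}\|v\|_{1,h,\Omega_i}^{2}$ with \emph{no} logarithm: only Scott--Zhang stability \eqref{eqn:Qg} and the standard trace inequality are used. The logarithm comes entirely from the vertices on $\partial\Gamma$, where $P_h^{\h}v=0$ by the definition \eqref{eq:loc-int}, so one must bound $\h^{d}|\chi_i(p)|^{2}$ (for $d=2$) or $\h^{2}\|\chi_i\|_{0,\partial\Gamma}^{2}$ (for $d=3$). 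This is done via the discrete Sobolev inequalities of \cite[Lemmas~2.3--2.4]{Bramble.J;Xu.J1991}, which produce exactly the $\log(2\h/h)$ factor.

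Thus your claim that the wire-basket points $\partial\Gamma$ ``contribute nothing in $d=2$'' is precisely backwards: in two dimensions they are the \emph{only} source of the logarithm. Your proposed mechanism---an $H^{1/2}_{00}(\Gamma)$ trace/extension estimate for the interface data---is not needed and does not fit here, since $P_h^{\h}v$ is defined nodally rather than as a discrete harmonic extension; the $H^{1/2}_{00}$ machinery belongs to substructuring analyses, not to this construction.

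Finally, the stability bound \eqref{eqn:Ph-stab} is not argued independently: once the $L^{2}$-approximation \eqref{approx:0P} is established, the paper obtains \eqref{eqn:Ph-stab} in two lines by inserting the local $L^{2}$-projection $\mathcal{P}_{T}v$ on each $T\in\mathcal{T}_{\h}$ and applying the inverse inequality together with the already-proved approximation estimate.
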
 
\begin{proof}
The proof follows the ideas from \cite[Lemma 4.6]{Bramble.J;Xu.J1991}, adapted to the present situation. We start by showing \eqref{approx:0P}. Using triangle
  inequality, Lemma~\ref{lm:local-iso}, together with the approximation
  result \eqref{eqn:Qi} of the $\mathcal{Q}_i$ from
  Lemma~\ref{lm:int}, we have
\begin{align}\label{caspa10}
	\|v - P_{h}^{\h} v\|_{0,\Omega_i} &\le \|v - \mathcal{Q}_{i}E_{i} v\|_{0,\Omega_i} + \|\mathcal{Q}_{i}E_{i} v - P_{h}^{\h} v\|_{0,\Omega_i} && \nonumber\\
	&\le \|v - E_{i} v\|_{0,\Omega_i} + \| (I- \mathcal{Q}_{i}) E_{i} v\|_{0,\Omega_i} +\|\mathcal{Q}_{i}E_{i} v - P_{h}^{\h} v\|_{0,\Omega_i} &&\nonumber \\
	&\lesssim h|v|_{1,h,\O_i}+ \h\|E_i v\|_{1,\O_i} +\|\mathcal{Q}_{i}E_{i} v - P_{h}^{\h} v\|_{0,\Omega_i}\; &&\nonumber \\
	&\lesssim h|v|_{1,h,\O_i}+ \h \| v\|_{1,h,\O_i}  +\|\mathcal{Q}_{i}E_{i} v - P_{h}^{\h} v\|_{0,\Omega_i}\;. &&
\end{align}	
Hence, to show the inequality \eqref{approx:0P} we only need to
estimate $\|\mathcal{Q}_{i}E_{i} v - P_{h}^{\h} v\|_{0,\Omega_i}$.

To simplify the notation, throughout the proof we set $\chi =
{P}_{\h}^{h} v\in V_{\h}^{\rm{conf}}$ as defined in
\eqref{eq:loc-int}, and denote $\chi_{i}:=\mathcal{Q}_{i}
E_{i}v$. From the definition of $P_{h}^{\h}$ in \eqref{eq:loc-int},
$\chi(p)\equiv \chi_{i}(p)$ when $p$ is a vertex of $\calT_{\h}$ in the interior of $\Omega_{i}$, and they are
different only on the boundary vertices. So by using discrete $L^{2}$ norm, we
have
    \begin{align}
    \|\mathcal{Q}_{i}E_{i} v - P_{h}^{\h} v\|_{0,\Omega_{i}}&= \|\chi-\chi_i\|_{0,\Omega_i}^2 
     \lesssim 
      \sum_{\Gamma\subset \partial
      \Omega_i}\sum_{p\in \Gamma} \h^{d}(\chi-\chi_i)^2(p) &&\nonumber\\
      &= \sum_{\Gamma\subset \partial \Omega_i}\left(\sum_{p\in
      {\rm int}(\Gamma)} \h^{d} \left( \mathcal{Q}_{\Gamma} E_{i} v- \chi_i\right)^2(p)+\sum_{p\in \partial \Gamma} \h^{d} \chi_i^2(p)\right) &&\nonumber\\
      &\lesssim \sum_{\Gamma\subset \partial
      \Omega_i}\left(\sum_{e\in \mathcal{E}_{\h}(\Gamma)} \h \|\mathcal{Q}_{\Gamma} E_{i} v- \chi_i\|_{0,e}^2+ \h^{2}\| \chi_i\|_{0, \partial \Gamma }^2\right).\label{caspa} &&
    \end{align}
    Below, we try to bound those two terms appearing in the last expression of \eqref{caspa}.
    
    For the first term in \eqref{caspa}, we observe that $\chi_{i} \equiv \mathcal{Q}_{\Gamma} \chi_{i}$ by Lemma~\ref{lm:int}.  Then by the $L^{2}$-stability property \eqref{eqn:Qg} of $\mathcal{Q}_{\Gamma}$,  we obtain
    \begin{align*}
      \h\|\mathcal{Q}_{\Gamma} E_{i} v - \chi_i\|_{0,e}^2&=  \h\|\mathcal{Q}_{\Gamma} E_{i} v - \mathcal{Q}_{\Gamma}\chi_i\|_{0,e}^2 \lesssim
      \h \| E_{i} v- \chi_i\|_{0, \mathcal{O}_{e} }^2 &&\\
      & \lesssim  \h (\h)^{-1}  \| (I-\mathcal{Q}_{i}) E_{i} v\|_{0, \omega_e}^2+
      \h^{2}|E_{i} v- \mathcal{Q}_{i} E_{i} v|_{1,\omega_e}^2 &&\\
      &\lesssim  \h^{2} \|E_{i}v\|_{1,\omega_e}^2, &&
    \end{align*}
    where in the second inequality, we used the standard trace inequality (cf. \cite[Lemma 2.1]{Bramble.J;Xu.J1991}), and in the last step we used the properties \eqref{eqn:Qi} of $\mathcal{Q}_{i}$.  Here $\omega_{e} := \cup\{T\in \cT_{\h} (\Omega_{i}): \partial T\cap \mathcal{O}_{e} \neq \emptyset\}$. Summing up the above inequality for all edges/faces on $\partial \Omega_{i}$,  we obtain that
    \begin{equation}\label{caspa2}
    \sum_{\Gamma\subset \partial  \Omega_i}\sum_{e\in \mathcal{E}_{\h}(\Gamma)} \h\| \mathcal{Q}_{\Gamma} E_{i}v - \chi_{i} \|_{0,e}^{2} \lesssim \h^{2} \|E_{i}v\|^{2}_{1,\Omega_{i}} \lesssim \h^{2}  \|v\|^{2}_{1,h,\Omega_{i}}.
    \end{equation} 
 
    To bound the second term in \eqref{caspa} we have to distinguish
    between the $2D$ and $3D$ cases. In the $2D$ case, $\Gamma$ is a
    one-dimensional edge of $\Omega_{i}$, so $\partial\Gamma$ reduces to its two endpoints, say
    $\{p,q\}$. Hence,
  \begin{equation*}
  \|\chi_i\|^{2}_{0,\partial\Gamma}=(|\chi_i(p)|^{2}+|\chi_i(q)|^{2})\leq  \|\chi_i\|_{0,\infty,\omega_p }^{2}+  \|\chi_i\|_{0,\infty,\omega_q }^{2}\;, \qquad  \partial\Gamma =\{ p, q\}\;.
  \end{equation*}
  To bound each of the above two terms on the right side, we use the
  two-dimensional discrete Sobolev inequality \cite[Lemma
  2.3]{Bramble.J;Xu.J1991};
  \begin{equation}\label{log:sob2D}
   \|\chi_i\|_{0,\infty, \omega_p}\leq C\left(\log{\frac{{\rm diam}(\omega_p)}{h}}\right)^{1/2}\|\chi_i\|_{1,\omega_p}\;.
   \end{equation}
  So summing over all $\Gamma \subset \partial\O_i$ the resulting
   estimate, we finally get
     \begin{align}
    \sum_{\Gamma\subset \partial \Omega_i}
      \|\chi_i\|^2_{0, \partial \Gamma } &\lesssim   \sum_{\Gamma\subset \partial \Omega_i}\sum_{p\in \partial \Gamma } \log{\left(\frac{{\rm diam}(\omega_p)}{h}\right)}\|\chi_i\|^{2}_{1,\omega_p} \lesssim
\log{\left(\frac{2\h}{h}\right)}\|\chi_i\|^{2}_{1,\O_i} && \nonumber \\
&=\log{\left(\frac{2\h}{h}\right)}\left\|\mathcal{Q}_{i} E_{i} v\right\|_{1,\O_i}^2 \lesssim \log{\left(\frac{2\h}{h}\right)} \left\| v\right\|_{1,h,\O_i}^2\;, \label{caspa5} &&
\end{align}
where in the second inequality we used the fact ${{\rm diam}(\omega_p)}\simeq 2\h$, and in the last step we used the inequality \eqref{eqn:Qi} of $\mathcal{Q}_{i}$ and the properties of $E_{i}$ in Lemma~\ref{lm:local-iso}.

In $3D$,  $\Gamma \subset \partial\O_i$ is a two-dimensional face of $\O_{i}$. So
$\partial\Gamma$ is a union of edges in the triangulation $\{e \in \calE_{\h}: e\subset \partial \Gamma\}$. In this case, we use the following the discrete Sobolev
inequality \cite[Lemma 2.4]{Bramble.J;Xu.J1991} (instead of
\eqref{log:sob2D} in 2D case):
 \begin{equation*}
  \|\chi_i\|^{2}_{0,\partial\Gamma}=\sum_{e\subset \partial \Gamma} \|\chi_i\|^{2}_{0,e} \lesssim \sum_{e\subset \partial \Gamma} \log{\left(\frac{{\rm diam}(\omega_e)}{h}\right)}\|\chi_i\|^{2}_{1,\omega_e}\;.
    \end{equation*}
    Summing the above estimate over all $\Gamma \subset \partial\O_i$ and
    using, as before, the inequality \eqref{eqn:Qi} of $\mathcal{Q}_{i}$ together with the properties of $E_i$ given in Lemma
    \ref{lm:local-iso}, we find
\begin{align}
    \sum_{\Gamma\subset \partial \Omega_i}
      \|\chi_i\|^2_{0, \partial \Gamma } &\lesssim  \sum_{\Gamma\subset \partial \Omega_i}\sum_{e\subset \partial \Gamma} \log{\left(\frac{{\rm diam}(\omega_e)}{h}\right)}\|\chi_i\|^{2}_{1,\omega_e}\lesssim 
\log{\left(\frac{2\h}{h}\right)}\sum_{\Gamma\subset \partial \Omega_i}\sum_{e\subset \partial \Gamma}\|\chi_i\|^{2}_{1,\omega_e}  &&\nonumber\\
&\lesssim 
\log{\left(\frac{2\h}{h}\right)}\|\chi_i\|^{2}_{1,\O_i}\lesssim 
\log{\left(\frac{2\h}{h}\right)} \left\| v\right\|_{1,h,\O_i}^2\;. && \label{eqn:3dg}
\end{align}
Now, substituting \eqref{eqn:3dg} (or \eqref{caspa5} when $d=2$)  and \eqref{caspa2} into \eqref{caspa},  we finally
get
         \begin{equation*}
           \|\mathcal{Q}_{i}E_{i} v - P_{h}^{\h} v\|^{2}_{0,\Omega_i}= \|\chi-\chi_i\|_{0,\Omega_i}^2 \lesssim \h^{2}  \|v\|^{2}_{1,h,\Omega_{i}} +
 \h^{2}\log{\left(\frac{2\h}{h}\right)} \left\| v\right\|_{1,h,\O_i}^2\;.
\end{equation*}
The inequality \eqref{approx:0P} then follows by inserting the above estimate in \eqref{caspa10}.

Finally we show the stability of $P_{h}^{\h}$ \eqref{eqn:Ph-stab}. Note that $P_{h}^{\h} v \in V_{\h}^{\mbox{\rm{conf}}}$ and
        $v\in V^{CR}_{h}$. To deal with possibly different mesh
        sizes we consider the local $L^{2}$-projection
        $\mathcal{P}_{\K} : L^{2}(\K) \lor \mathbb{P}^{1}(\K)$ for any
        $\K\in \mathcal{T}_{\h}$. For $\h >h$, such an element is the
        union of other subelements in the partition $\Th$. Then,
        adding and subtracting $\mathcal{P}_{\K} v$, triangle
        inequality together with inverse inequality and the
        approximation property \eqref{approx:0P}, gives
\begin{align*}
| P_{h}^{\h} v |_{1,\K} &\leq | P_{h}^{\h} v-\mathcal{P}_{\K} v |_{1,\K}+ | \mathcal{P}_{\K} v |_{1,\K} \leq C(\h)^{-1}\| P_{h}^{\h} v -\mathcal{P}_{\K} v \|_{0,\K} + | \mathcal{P}_{\K} v |_{1,\K}&&\\
&\leq C(\h)^{-1}\left( \| P_{h}^{\h} v- v\|_{0,\K}+\|v-\mathcal{P}_{\K} v \|_{0,\K} \right)+ C|v|_{1,\K} &&\\
&\leq C(\h)^{-1}\| P_{h}^{\h} v- v\|_{0,\K} +C\|v\|_{1,\K}\;. &&
        \end{align*}
 The Stability now follows immediately, by summing over all elements $\K\subset \O_{i}$,  using the definition of the weighted $H^{1}$-semi-norm and the weighted $L^{2}$-norm together with the approximation result already shown:  
  \begin{align*}
     | P_{h}^{\h} v |_{1, \kappa, \O} &\leq C \h^{-1}\| P_{h}^{\h} v- v\|_{0,\kappa, \O} +\|v\|_{1,h,\kappa,\O}  &&\\
     &\leq C\h^{-1} \h \left(\log{\left(\frac{2\h}{h}\right)}\right)^{1/2} \| v\|_{1,h,\kappa, \O}  + \|v\|_{1,h,\kappa,\O}     &&\\
     &\lesssim   \left(\log{\left(\frac{2\h}{h}\right)}\right)^{1/2} \left\| v\right\|_{1,h,\kappa, \O}\;, &&
  \end{align*}   
   and the proof is complete.  
   \end{proof}


\begin{thebibliography}{10}

\bibitem{AgmonS-1965aa}
S.~Agmon.
\newblock {\em Lectures on elliptic boundary value problems}.
\newblock Prepared for publication by B. Frank Jones, Jr. with the assistance
  of George W. Batten, Jr. Van Nostrand Mathematical Studies, No. 2. D. Van
  Nostrand Co., Inc., Princeton, N.J.-Toronto-London, 1965.

\bibitem{AntoniettiPAyusoB-2007aa}
P.~F. Antonietti and B.~Ayuso.
\newblock Schwarz domain decomposition preconditioners for discontinuous
  {G}alerkin approximations of elliptic problems: non-overlapping case.
\newblock {\em Math. Model. Numer. Anal.}, 41(1):21--54, 2007.

\bibitem{AntoniettiPAyusoB-2008aa}
P.~F. Antonietti and B.~Ayuso.
\newblock Multiplicative {S}chwarz methods for discontinuous {G}alerkin
  approximations of elliptic problems.
\newblock {\em Math. Model. Numer. Anal.}, 42(3):443--469, 2008.

\bibitem{AntoniettiPAyusoB-2010aa}
P.~F. Antonietti and B.~Ayuso.
\newblock Two-level {S}chwarz preconditioners for super penalty discontinuous
  {G}alerkin methods.
\newblock {\em Commun. Comput. Phys.}, to appear.

\bibitem{ArnoldDBrezziFCockburnBMariniL-2001aa}
D.~N. Arnold, F.~Brezzi, B.~Cockburn, and L.~D. Marini.
\newblock Unified analysis of discontinuous {G}alerkin methods for elliptic
  problems.
\newblock {\em SIAM J. Numer. Anal.}, 39(5):1749--1779 (electronic), 2001/02.

\bibitem{Axelsson.O1994}
O.~Axelsson.
\newblock {\em Iterative solution methods}.
\newblock Cambridge University Press, Cambridge, 1994.

\bibitem{Axelsson.O2003}
O.~Axelsson.
\newblock Iteration number for the conjugate gradient method.
\newblock {\em Mathematics and Computers in Simulation}, 61(3-6):421--435,
  2003.
\newblock MODELLING 2001 (Pilsen).

\bibitem{abm}
B.~Ayuso~de Dios, F.~Brezzi, O.~Havle, and L.~D. Marini.
\newblock ${L}^{2}$-estimates for the {D}{G} {I}{I}{P}{G}-0 scheme.
\newblock to appear in Numer. Methods for Partial Differential Equations, DOI:
  10.1002/num.20687, 2011.

\bibitem{ahzz:tech}
B.~Ayuso~de Dios, M.~Holst, Y.~Zhu, and L.~Zikatanov.
\newblock Multilevel preconditioners for discontinuous {G}alerkin
  approximations of elliptic problems with jump coefficients.
\newblock {\em Arxiv preprint arXiv:1012.1287}, 2010.

\bibitem{Ayuso-de-DiosBZikatanovL-2009aa}
B.~Ayuso~de Dios and L.~Zikatanov.
\newblock Uniformly convergent iterative methods for discontinuous {G}alerkin
  discretizations.
\newblock {\em J. Sci. Comput.}, 40(1-3):4--36, 2009.

\bibitem{bren10}
A.~Barker, S.~Brenner, E.-H. Park, and L.-Y. Sung.
\newblock Two-level additive {S}chwarz preconditioners for a weakly
  over-penalized symmetric interior penalty method.
\newblock {\em Journal of Scientific Computing}, pages 1--23, 2010.
\newblock 10.1007/s10915-010-9419-5.

\bibitem{Bramble.J;Pasciak.J;Schatz.A1989}
J.~H. Bramble, J.~E. Pasciak, and A.~H. Schatz.
\newblock The construction of preconditioners for elliptic problems by
  substructuring, {IV}.
\newblock {\em Mathematics of Computation}, 53:1--24, 1989.

\bibitem{1990BrambleJ_PasciakJ_XuJ-aa}
J.~H. Bramble, J.~E. Pasciak, and J.~Xu.
\newblock Parallel multilevel preconditioners.
\newblock {\em Math. Comp.}, 55(191):1--22, 1990.

\bibitem{Bramble.J;Xu.J1991}
J.~H. Bramble and J.~Xu.
\newblock Some estimates for a weighted ${L}^2$ projection.
\newblock {\em Mathematics of Computation}, 56:463--476, 1991.

\bibitem{BrandtMcCormickRuge_1982aa}
A.~Brandt, S.~F. McCormick, and J.~W. Ruge.
\newblock Algebraic multigrid ({AMG}) for automatic multigrid solution with
  application to geodetic computations.
\newblock {T}ech. {R}ep., Institute for Computational Studies, Colorado State
  University, 1982.

\bibitem{BrennerS-2003aa}
S.~C. Brenner.
\newblock Poincar\'e-{F}riedrichs inequalities for piecewise {$H\sp 1$}
  functions.
\newblock {\em SIAM J. Numer. Anal.}, 41(1):306--324 (electronic), 2003.

\bibitem{bren11}
S.~C. Brenner, J.~Cui, and L.-Y. Sung.
\newblock Multigrid methods for the symmetric interior penalty method on graded
  meshes.
\newblock {\em Numer. Linear Algebra Appl.}, 16(6):481--501, 2009.

\bibitem{wonipgW}
S.~C. Brenner and L.~Owens.
\newblock A {$W$}-cycle algorithm for a weakly over-penalized interior penalty
  method.
\newblock {\em Comput. Methods Appl. Mech. Engrg.}, 196(37-40):3823--3832,
  2007.

\bibitem{BrennerSOwensL-2007aa}
S.~C. Brenner and L.~Owens.
\newblock A weakly over-penalized non-symmetric interior penalty method.
\newblock {\em JNAIAM J. Numer. Anal. Ind. Appl. Math.}, 2(1-2):35--48, 2007.

\bibitem{brennerVF04}
S.~C. Brenner and J.~Zhao.
\newblock Convergence of multigrid algorithms for interior penalty methods.
\newblock {\em Appl. Numer. Anal. Comput. Math.}, 2(1):3--18, 2005.

\bibitem{BrezziFCockburnBMariniLSuliE-2006aa}
F.~Brezzi, B.~Cockburn, L.~D. Marini, and E.~S{\"u}li.
\newblock Stabilization mechanisms in discontinuous {G}alerkin finite element
  methods.
\newblock {\em Comput. Methods Appl. Mech. Engrg.}, 195(25-28):3293--3310,
  2006.

\bibitem{BrixKCampos-PintoMDahmenW-2008aa}
K.~Brix, M.~Campos~Pinto, and W.~Dahmen.
\newblock A multilevel preconditioner for the interior penalty discontinuous
  {G}alerkin method.
\newblock {\em SIAM J. Numer. Anal.}, 46(5):2742--2768, 2008.

\bibitem{BrixKCampos-PintoMDahmenWMassjungR-2009aa}
K.~Brix, M.~Campos~Pinto, W.~Dahmen, and R.~Massjung.
\newblock Multilevel preconditioners for the interior penalty discontinuous
  {G}alerkin method. {II}. {Q}uantitative studies.
\newblock {\em Commun. Comput. Phys.}, 5(2-4):296--325, 2009.

\bibitem{BurmanEStammB-2008aa}
E.~Burman and B.~Stamm.
\newblock Low order discontinuous {G}alerkin methods for second order elliptic
  problems.
\newblock {\em SIAM J. Numer. Anal.}, 47(1):508--533, 2008.

\bibitem{BurmanEZuninoP-2006aa}
E.~Burman and P.~Zunino.
\newblock A domain decomposition method based on weighted interior penalties
  for advection-diffusion-reaction problems.
\newblock {\em SIAM Journal on Numerical Analysis}, 44(4):1612--1638, 2006.

\bibitem{Chen.L;Holst.M;Xu.J;Zhu.Y2010}
L.~Chen, M.~Holst, J.~Xu, and Y.~Zhu.
\newblock Local multilevel preconditioners for elliptic equations with jump
  coefficients on bisection grids.
\newblock {\em Arxiv preprint arXiv:1006.3277}, 2010.

\bibitem{2008ChoD_XuJZikatanovL-aa}
D.~Cho, J.~Xu, and L.~Zikatanov.
\newblock New estimates for the rate of convergence of the method of subspace
  corrections.
\newblock {\em Numerical Mathematics. Theory, Methods and Applications},
  1(1):44--56, 2008.

\bibitem{CiarletP-1978aa}
P.~G. Ciarlet.
\newblock {\em The finite element method for elliptic problems}.
\newblock North-Holland Publishing Co., Amsterdam, 1978.
\newblock Studies in Mathematics and its Applications, Vol. 4.

\bibitem{CockbDuboiGopal10}
B.~Cockburn, O.~Dubois, J.~Gopalakrishnan, and S.~Tan.
\newblock Multigrid for an {HDG} method.
\newblock Submitted, 2010.

\bibitem{Di-PietroDErnAGuermondJ-2008aa}
D.~A. Di~Pietro, A.~Ern, and J.-L. Guermond.
\newblock Discontinuous {G}alerkin methods for anisotropic semidefinite
  diffusion with advection.
\newblock {\em SIAM J. Numer. Anal.}, 46(2):805--831, 2008.

\bibitem{DobrevVLazarovRVassilevskiPZikatanovL-2006aa}
V.~A. Dobrev, R.~D. Lazarov, P.~S. Vassilevski, and L.~T. Zikatanov.
\newblock Two-level preconditioning of discontinuous {G}alerkin approximations
  of second-order elliptic equations.
\newblock {\em Numer. Linear Algebra Appl.}, 13(9):753--770, 2006.

\bibitem{Dolejsi.V;Feistauer.M;Felcman.J1999}
V.~Dolej{\v{s}}{\'{\i}}, M.~Feistauer, and J.~Felcman.
\newblock On the discrete {F}riedrichs inequality for nonconforming finite
  elements.
\newblock {\em Numer. Funct. Anal. Optim.}, 20(5-6):437--447, 1999.

\bibitem{Dryja2003}
M.~Dryja.
\newblock {On discontinuous Galerkin methods for elliptic problems with
  discontinuous coefficients}.
\newblock {\em Computational Methods in Applied Mathematics}, 3(1):76--85,
  2003.

\bibitem{sarkis0}
M.~Dryja, J.~Galvis, and M.~Sarkis.
\newblock B{DDC} methods for discontinuous {G}alerkin discretization of
  elliptic problems.
\newblock {\em J. Complexity}, 23(4-6):715--739, 2007.

\bibitem{sarkis1}
M.~Dryja, J.~Galvis, and M.~Sarkis.
\newblock {Neumann-Neumann} methods for a {DG} discretization of elliptic
  problems with discontinuous coefficients on geometrically nonconforming
  substructures.
\newblock Technical Report Serie A 634, Instituto de Matematica Pura e
  Aplicada, Brazil, 2009.
\newblock submitted.

\bibitem{sarkis2}
M.~Dryja and M.~Sarkis.
\newblock {FETI-DP} method for {DG} discretization of elliptic problems with
  discontinuous coefficients.
\newblock Technical report, Instituto de Matematica Pura e Aplicada, Brazil,
  2010.
\newblock submitted.

\bibitem{dryja13}
M.~Dryja, B.~F. Smith, and O.~B. Widlund.
\newblock Schwarz analysis of iterative substructuring algorithms for elliptic
  problems in three dimensions.
\newblock {\em SIAM J. Numer. Anal.}, 31(6):1662--1694, 1994.

\bibitem{dryja14}
M.~Dryja and O.~B. Widlund.
\newblock Schwarz methods of {N}eumann-{N}eumann type for three-dimensional
  elliptic finite element problems.
\newblock {\em Comm. Pure Appl. Math.}, 48(2):121--155, 1995.

\bibitem{kara0}
X.~Feng and O.~A. Karakashian.
\newblock Two-level additive {S}chwarz methods for a discontinuous {G}alerkin
  approximation of second order elliptic problems.
\newblock {\em SIAM J. Numer. Anal.}, 39(4):1343--1365 (electronic), 2001.

\bibitem{1996GolubG_Van-LoanC-aa}
G.~H. Golub and C.~F. Van~Loan.
\newblock {\em Matrix computations}.
\newblock Johns Hopkins Studies in the Mathematical Sciences. Johns Hopkins
  University Press, Baltimore, MD, third edition, 1996.

\bibitem{GopalakrishnanJKanschatG-2003aa}
J.~Gopalakrishnan and G.~Kanschat.
\newblock A multilevel discontinuous {G}alerkin method.
\newblock {\em Numer. Math.}, 95(3):527--550, 2003.

\bibitem{GrahamI_HaggerM-1999aa}
I.~G. Graham and M.~J. Hagger.
\newblock Unstructured additive {S}chwarz-conjugate gradient method for
  elliptic problems with highly discontinuous coefficients.
\newblock {\em SIAM Journal on Scientific Computing}, 20:2041--2066, 1999.

\bibitem{GriebelM_OswaldP-1995aa}
M.~Griebel and P.~Oswald.
\newblock On the abstract theory of additive and multiplicative {S}chwarz
  algorithms.
\newblock {\em Numer. Math.}, 70(2):163--180, 1995.

\bibitem{Hackbusch.W1994}
W.~Hackbusch.
\newblock {\em {Iterative Solution of Large Sparse Systems of Equations}},
  volume~95 of {\em Applied Mathematical Sciences}.
\newblock Springer-Verlag New York, Inc., 1994.

\bibitem{Klawonn.A;Widlund.O;Dryja.M2002}
A.~Klawonn, O.~Widlund, and M.~Dryja.
\newblock Dual-primal {FETI} methods for three-dimensional elliptic problems
  with heterogeneous coefficients.
\newblock {\em SIAM J. Numer. Anal.}, 40(1):159--179, 2002.

\bibitem{kraus1}
J.~K. Kraus and S.~K. Tomar.
\newblock A multilevel method for discontinuous {G}alerkin approximation of
  three-dimensional anisotropic elliptic problems.
\newblock {\em Numer. Linear Algebra Appl.}, 15(5):417--438, 2008.

\bibitem{kraus0}
J.~K. Kraus and S.~K. Tomar.
\newblock Multilevel preconditioning of two-dimensional elliptic problems
  discretized by a class of discontinuous {G}alerkin methods.
\newblock {\em SIAM J. Sci. Comput.}, 30(2):684--706, 2008.

\bibitem{Mandel.J;Brezina.M1996}
J.~Mandel and M.~Brezina.
\newblock Balancing domain decomposition for problems with large jumps in
  coefficients.
\newblock {\em Math. Comp.}, 65(216):1387--1401, 1996.

\bibitem{Oswald.P1994}
P.~Oswald.
\newblock {\em Multilevel Finite Element Approximation, Theory and
  Applications}.
\newblock Teubner Skripten zur Numerik. Teubner Verlag, Stuttgart, 1994.

\bibitem{SA_DG0}
F.~Prill, M.~Luk{\'a}{\v{c}}ov{\'a}-Medvi{\v{d}}ov{\'a}, and R.~Hartmann.
\newblock Smoothed aggregation multigrid for the discontinuous {G}alerkin
  method.
\newblock {\em SIAM J. Sci. Comput.}, 31(5):3503--3528, 2009.

\bibitem{sarkisNC1}
M.~Sarkis.
\newblock Multilevel methods for {$P_1$} nonconforming finite elements and
  discontinuous coefficients in three dimensions.
\newblock In {\em Domain decomposition methods in scientific and engineering
  computing ({U}niversity {P}ark, {PA}, 1993)}, volume 180 of {\em Contemp.
  Math.}, pages 119--124. Amer. Math. Soc., Providence, RI, 1994.

\bibitem{sarkisNC0}
M.~Sarkis.
\newblock Nonstandard coarse spaces and {S}chwarz methods for elliptic problems
  with discontinuous coefficients using non-conforming elements.
\newblock {\em Numer. Math.}, 77(3):383--406, 1997.

\bibitem{Scott.R;Zhang.S1990}
R.~Scott and S.~Zhang.
\newblock Finite element interpolation of nonsmooth functions satisfying
  boundary conditions.
\newblock {\em Mathematics of Computation}, 54:483--493, 1990.

\bibitem{StenbergR-1998aa}
R.~Stenberg.
\newblock Mortaring by a method of {J}. {A}. {N}itsche.
\newblock In {\em Computational mechanics ({B}uenos {A}ires, 1998)}, pages
  CD--ROM file. Centro Internac. M\'etodos Num\'er. Ing., Barcelona, 1998.

\bibitem{Toselli.A;Widlund.O2005}
A.~Toselli and O.~Widlund.
\newblock {\em Domain Decomposition Methods: Algorithms and Theory}.
\newblock Springer Series in Computational Mathematics, 2005.

\bibitem{VassilevskiP-2008aa}
P.~S. Vassilevski.
\newblock {\em Multilevel block factorization preconditioners: {M}atrix-based
  analysis and algorithms for solving finite element equations}.
\newblock Springer-Verlag, July 2008.

\bibitem{XuJ-1992aa}
J.~Xu.
\newblock Iterative methods by space decomposition and subspace correction.
\newblock {\em SIAM Rev.}, 34(4):581--613, 1992.

\bibitem{XuJZhuY-2008aa}
J.~Xu and Y.~Zhu.
\newblock Uniform convergent multigrid methods for elliptic problems with
  strongly discontinuous coefficients.
\newblock {\em Mathematical Models and Methods in Applied Science}, 18(1):77
  --105, 2008.

\bibitem{XuJZikatanovL-2002aa}
J.~Xu and L.~Zikatanov.
\newblock The method of alternating projections and the method of subspace
  corrections in {H}ilbert space.
\newblock {\em J. Amer. Math. Soc.}, 15(3):573--597 (electronic), 2002.

\bibitem{Xu.J;Zou.J1998}
J.~Xu and J.~Zou.
\newblock Some nonoverlapping domain decomposition methods.
\newblock {\em SIAM Rev.}, 40(4):857--914, 1998.

\bibitem{ZhuY-2008aa}
Y.~Zhu.
\newblock Domain decomposition preconditioners for elliptic equations with jump
  coefficients.
\newblock {\em Numerical Linear Algebra with Applications}, 15(2-3):271--289,
  2008.

\bibitem{Zhu.Y2011}
Y.~Zhu.
\newblock Analysis of a multigrid preconditioner for crouzeix-raviart
  discretization of elliptic pde with jump coefficient.
\newblock {\em Arxiv preprint arXiv:1110.5159}, 2011.

\bibitem{Zikatanov.L2008}
L.~Zikatanov.
\newblock {Two-sided bounds on the convergence rate of two-level methods}.
\newblock {\em Numerical Linear Algebra with Applications}, 15(5):439 -- 454,
  2008.

\end{thebibliography}

 \end{document}